\numberwithin{equation}{section} \numberwithin{figure}{section}
\definecolor{ddmagenta}{rgb}{0.7,0,1.0}
\definecolor{ddcyan}{rgb}{0,0.1,1.0}
\definecolor{dred}{rgb}{.8,0,0}
\definecolor{ddgreen}{rgb}{0,0.4,0.4}
\newcommand{\bele}{\begin{lemm}}
\newcommand{\enle}{\end{lemm}}
\newcommand{\bedef}{\begin{defi}}
\newcommand{\bete}{\begin{teor}}
\newcommand{\eddef}{\end{defi}}
\newcommand{\ente}{\end{teor}}
\newcommand{\beos}{\begin{remark}}
\newcommand{\eddos}{\end{remark}}
\newcommand{\bepr}{\begin{prop}}
\newcommand{\empr}{\end{prop}}
\newcommand{\bepro}{\begin{prob}}
\newcommand{\empro}{\end{prob}}
\newcommand{\bede}{\begin{defin}}
\newcommand{\edde}{\end{defin}}
\newcommand{\beco}{\begin{coro}}
\newcommand{\enco}{\end{coro}}
\newcommand{\beeq}[1]{\begin{equation}
 \label{#1}}
\newcommand{\eddeq}{\end{equation}}
\newcommand{\beeqa}[1]{\begin{eqnarray}
  \label{#1}}
\newcommand{\eddeqa}{\end{eqnarray}}
\newcommand{\beal}[1]{\begin{align}
 \label{#1}}
\newcommand{\eddal}{\end{align}}
\newcommand{\bespl}[1]{\begin{split}
 \label{#1}}
\newcommand{\edspl}{\end{split}}
\newcommand{\bega}[1]{\begin{gather}
 \label{#1}}
\newcommand{\edga}{\end{gather}}
\newcommand{\beeqax}{\begin{eqnarray*}}
\newcommand{\eddeqax}{\end{eqnarray*}}
\newcommand{\no}{\nonumber}
\newcommand{\tensore}{\varepsilon({\bf u})}
\newcommand{\tensoret}{\varepsilon(\partial_t \uu)}
\numberwithin{equation}{section}
\newcommand{\teta}{\vartheta}
\newcommand{\dt}{\partial_t}
\newcommand{\uu}{\mathbf{u}}
\newcommand{\ww}{\mathbf{w}}
\newcommand{\vv}{\mathbf{v}}
\newcommand{\eeta}{{\mbox{\boldmath$\eta$}}}
\newcommand{\mmu}{{\mbox{\boldmath$\mu$}}}
\newcommand{\eps}{\varepsilon}
\newcommand{\epsi}{\varepsilon}
\newcommand{\weak}{\rightharpoonup}
\newcommand{\weakstar}{\mathop{\rightharpoonup}^{*}}
 \DeclareMathOperator{\dive}{div}
\let\TeXchi\chi
\def\chi{{\setbox0 \hbox{\mathsurround0pt
$\TeXchi$}\hbox{\raise\dp0 \copy0 }}}
\newtheorem{maintheorem}{Theorem}
\newtheorem{theorem}{Theorem}[section]
\newtheorem{lemma}{Lemma}[section]
\newtheorem{proposition}[lemma]{Proposition}
\newtheorem{remark}[lemma]{Remark}%
\newtheorem{problem}[lemma]{Problem}
\newtheorem{notation}[lemma]{Notation}%
\newtheorem{example}[lemma]{Example}
\begin{document}

\newcommand{\Q}{\Omega \times (0,T)}
\newcommand{\Qt}{\Omega \times (0,t)}
\newcommand{\Qtau}{\Omega \times (0,\tau)}
\newcommand{\til}{\widetilde}
\renewcommand{\part}{\partial_t}
\renewcommand{\teta}{\vartheta}
\newcommand{\accaunoz}{H^1_0(\Omega)}
\newcommand{\accadue}{H^2(\Omega)}
\newcommand{\nnu}{\nonumber}
\newcommand{\irre}{\rho}
\newcommand{\vinc}{\beta}
\newcommand{\weaksto}{{\rightharpoonup^*}}
\newcommand{\weakto}{\rightharpoonup}
\newcommand{\debole}{\,\weak\,}
\newcommand{\debolestar}{\,\weakstar\,}

\newcommand{\pairing}[4]{ \sideset{_{ #1 }}{_{ #2 }}  {\mathop{\langle #3 , #4
\rangle}}}

\newcommand{\comments}[1]{\marginpar{\tiny\textit{#1}}}

 \def\fin{\hfill
         \trait .3 5 0
         \trait 5 .3 0
         \kern-5pt
         \trait 5 5 -4.7
         \trait 0.3 5 0
 \medskip}
 \def\trait #1 #2 #3 {\vrule width #1pt height #2pt depth #3pt}
\newcommand{\forae}{\text{for a.e.}}
\newcommand{\aein}{\text{a.e.\ in}}

\newcommand{\down}{\downarrow}
\newcommand{\up}{\to}

\newcommand{\R}{\Bbb{R}}
\newcommand{\N}{\Bbb{N}}
\newcommand{\Z}{\Bbb{Z}}
\newcommand{\C}{\Bbb{C}}
\newcommand{\M}{\Bbb{M}}
\newcommand{\F}{\Bbb{F}}

\newcommand{\piecewiseConstant}[2]{\overline{#1}_{\kern-1pt#2}^\eps}
\newcommand{\pwC}{\piecewiseConstant}
\newcommand{\underlinepiecewiseConstant}[2]{\underline{#1}_{\kern-1pt#2}^\eps}
\newcommand{\upwC}{\underlinepiecewiseConstant}

\newcommand{\piecewiseLinear}[2]{{#1}_{\kern-1pt#2}^\eps}
\newcommand{\pwL}{\piecewiseLinear}
\newcommand{\pwM}[2]{\widetilde{#1}_{\kern-1pt#2}}
 \def\trait #1 #2 #3 {\vrule width #1pt height #2pt depth #3pt}
\newcommand{\pwN}[2]{#1_{\kern-1pt#2}}
 \def\trait #1 #2 #3 {\vrule width #1pt height #2pt depth #3pt}

\newcommand{\uk}{\pwN {\uu^k}{\tau}}
\newcommand{\un}{\pwN {\uu^n}{\tau}}
\newcommand{\Fn}{\pwN {J^n}{\tau}}
\newcommand{\uku}{\pwN {\uu^{k-1}}{\tau}}
\newcommand{\unu}{\pwN {\uu^{n-1}}{\tau}}
\newcommand{\chin}{\pwN {\chi^n}{\tau}}
\newcommand{\Fun}{\pwN {\mathbf{F}^n}{\tau}}
\newcommand{\gun}{\pwN {\mathbf{g}^n}{\tau}}
\newcommand{\chinuno}{\pwN {\chi^{n+1}}{\tau}}
\newcommand{\dom}{\text{dom}}

\newcommand{\uuh}{\widehat{\uu}}
\newcommand{\chih}{\widehat{\chi}}
\newcommand{\eetah}{\widehat{\eeta}}
\newcommand{\xih}{\widehat{\xi}}
\newcommand{\zetah}{\widehat{\zeta}}

\newcommand{\FG}[1]{\mathbf{#1}}
\def\bsA{{\FG A}} \def\bsB{{\FG B}} \def\bsC{{\FG C}}
\def\bsD{{\FG D}} \def\bsE{{\FG E}} \def\bsF{{\FG F}}
\def\bsG{{\FG G}} \def\bsH{{\FG H}} \def\bsI{{\FG I}}
\def\bsJ{{\FG J}} \def\bsK{{\FG K}} \def\bsL{{\FG L}}
\def\bsM{{\FG M}} \def\bsN{{\FG N}} \def\bsO{{\FG O}}
\def\bsP{{\FG P}} \def\bsQ{{\FG Q}} \def\bsR{{\FG R}}
\def\bsS{{\FG S}} \def\bsT{{\FG T}} \def\bsU{{\FG U}}
\def\bsV{{\FG V}} \def\bsW{{\FG W}} \def\bsX{{\FG X}}
\def\bsY{{\FG Y}} \def\bsZ{{\FG Z}}


\newcommand{\uun}{u_{\mathrm{N}}}
\newcommand{\whuun}{\widehat{u}_{\mathrm{N}}}
\newcommand{\uuni}[1]{(u_{#1})_{\mathrm{N}}}
\newcommand{\uut}{\uu_{\mathrm{T}}}
\newcommand{\wwt}{\ww_{\mathrm{T}}}
\newcommand{\dotu}{\partial_t{\uu}}
\newcommand{\dotui}[1]{\partial_t{\uu}_{#1}}
\newcommand{\dotut}{\partial_t{\uu}_{\mathrm{T}}}
\newcommand{\vvn}{v_{\mathrm{N}}}
\newcommand{\vvt}{\vv_{\mathrm{T}}}
\newcommand{\nn}{\mathbf{n}}
\newcommand{\zz}{\mathbf{z}}
\newcommand{\dotv}{\partial_t{\vv}}
\newcommand{\dotvt}{\partial_t{\vv}_{\mathrm{T}}}
\newcommand{\ssigma}{{\mbox{\boldmath$\sigma$}}}
\newcommand{\sigman}{\sigma_{\mathrm{N}}}
\newcommand{\sigmat}{\ssigma_{\mathrm{T}}}
\newcommand{{\gc}}{{\Gamma_{\mathrm{c}}}}
\newcommand{\cn}{c_{\mathrm{N}}}
\newcommand{\ct}{c_{\mathrm{T}}}
\newcommand{\Cweak}{\mathrm{C}^0_{\mathrm{weak}}}
\newcommand{\Reg}{\mathcal{R}}
\newcommand{\dd}{\,\mathrm{d}}
\newcommand{\foraa}{\text{for a.a.}}
\newcommand{\funeta}[1]{\mathcal{J}_{#1}}
\newcommand{\plfuneta}[1]{\partial\mathcal{J}_{#1}}

\newcommand{\uue}{\uu_{\eps}}
\newcommand{\ud}[1]{\uu_{#1}^{\eps}}
\newcommand{\chid}[1]{\chi_{#1}^{\eps}}
\newcommand{\etad}[1]{\eeta_{#1}^{\eps}}
\newcommand{\zud}[1]{\zz_{#1}^{\eps}}
\newcommand{\mud}[1]{\mmu_{#1}^{\eps}}
\newcommand{\xid}[1]{\xi_{#1}^{\eps}}
\newcommand{\udno}[1]{(\uu_{#1}^{\eps})_{\mathrm{N}}}
\newcommand{\uuen}{(u_{\eps})_{\mathrm{N}}}
\newcommand{\uune}{u_{\mathrm{N}}^{\eps}}
\newcommand{\uute}{(\uue)_{\mathrm{T}}}
\newcommand{\dotue}{\partial_t{\uu}_{\eps}}
\newcommand{\dotute}{(\partial_t{\uue})_{\mathrm{T}}}
\newcommand{\chie}{\chi_{\eps}}
\newcommand{\zetae}{\zeta_\eps}
\newcommand{\zze}{\mathbf{z}_\eps}
\newcommand{\He}{\mathcal{H}}
\newcommand{\matrid}{\mathbf{1}}
\newcommand{\fc}{\mathfrak{c}}
\newcommand{\tetazerose}{\vartheta_s^{0,\eps}}
\newcommand{\tetazeroe}{\vartheta_0^{\eps}}
\newcommand{\tetazeros}{\vartheta_s^{0}}
\newcommand{\tetazero}{\vartheta_0}

\newcommand{\Hc}{H_{\Gamma_c}}
\newcommand{\V}{V}
\newcommand{\Vc}{V_{\Gamma_c}}
\newcommand{\Yc}{Y_{\Gamma_c}}
\newcommand{\bvarphi}{{\mbox{\boldmath$\varphi$}}}
\newcommand{\hunoc}{H^1 (\Gamma_c)}
\newcommand{\hc}{\Hc}
\newcommand{\bfW}{\mathbf{W}}
\newcommand{\bfw}{\mathbf{W}}
\newcommand{\sig}{\sigma}
\newcommand{\HH}{\mathrm{H}}
\newcommand{\BV}{\mathrm{BV}}
\newcommand{\applog}{{\cal L}_\varepsilon}
\newcommand{\tetas}{\teta_s}

\newenvironment{proof}[1][Proof]{\textbf{#1.} }{\ \rule{0.5em}{0.5em}}


\newenvironment{rcomm}{\color{dred}}{\color{black}}
\newcommand{\ber}{\begin{rcomm}}
\newcommand{\edr}{\end{rcomm}}
\newenvironment{controlla}{\color{dred}}{\color{black}}
\newenvironment{new}{\color{ddcyan}}{\color{black}}
\newcommand{\ben}{\begin{new}}
\newcommand{\enn}{\end{new}}
\newenvironment{rnew}{\color{ddcyan}}{\color{black}}
\newenvironment{enew}{\color{ddgreen}}{\color{black}}
\newcommand{\been}{\begin{enew}}
\newcommand{\eenn}{\end{enew}}

                                %
\title{\Large  Analysis of a temperature-dependent model for adhesive contact with friction}

\author{
Elena Bonetti\footnote{\emph{Dipartimento di Matematica ``F.\
Casorati'', Universit\`a di Pavia.
 Via Ferrata, 1 -- 27100 Pavia, Italy}
email: {\tt elena.bonetti\,@\,unipv.it}}, Giovanna
Bonfanti\footnote{\emph{Dipartimento di Matematica, Universit\`a di
 Brescia, via Valotti 9, I--25133 Brescia, Italy,}
 e-mail: {\tt bonfanti\,@\,ing.unibs.it}}, and
Riccarda Rossi\footnote{\emph{Dipartimento di Matematica,
Universit\`a di
 Brescia, via Valotti 9, I--25133 Brescia, Italy,}
 e-mail: {\tt riccarda.rossi\,@\,ing.unibs.it}} }

\date{19.07.2012}
 \maketitle

\begin{abstract}
\noindent
We  propose  a model for  (unilateral) contact with adhesion between a viscoelastic body and
a rigid support,   encompassing thermal \emph{and} frictional effects.
Following \textsc{Fr\'emond}'s  approach, adhesion is described in terms of a surface damage parameter $\chi$. The related  equations are
  the momentum balance for the vector of small  displacements,   and  parabolic-type  evolution equations for $\chi$
and for the absolute temperatures of the body and of the adhesive  substance  on the contact surface.
 All of  the constraints on the internal variables, as well as the contact and the friction conditions, are rendered by means
of subdifferential operators.
  Furthermore, the temperature equations,
 derived from an \emph{entropy} balance law,   feature   singular functions.
  Therefore, the resulting  PDE system  has a highly nonlinear character.

 The main result of the paper states the
 existence of global-in-time solutions to the associated Cauchy problem. It is proved by
 passing to the limit in a carefully tailored approximate problem, via variational techniques.
\end{abstract}
\vskip3mm

\noindent {\bf Key words:} contact, adhesion, friction, thermoviscoelasticity, entropy balance.

\vskip3mm \noindent {\bf AMS (MOS) Subject Classification: 35K55,
74A15, 74M15.}

\section{Introduction}
\noindent In this paper we propose and analyze a PDE system
modelling  thermal effects in adhesive contact with friction. More
specifically,  we focus on the phenomenon of contact 
between a thermoviscoelastic body and a \emph{rigid} support. We suppose that the body adheres to the
support on a part 
 of its boundary.

 Following  \textsc{M. Fr\'emond}'s approach
\cite{fremond-nedjar,fre},  we describe adhesion in terms of a surface damage
parameter $\chi$ (and its gradient $\nabla \chi$), related to the state of the bonds responsible for
 the adherence of the body to the support.  Further, we consider small displacements $\uu$ and possibly different temperatures
in the body and on the contact surface. Internal constraints, such  as unilateral conditions, are ensured by
the presence of non-smooth monotone operators, also generalizing the Signorini conditions
for unilateral contact to the case when adhesion is active.   Frictional effects are
 encompassed
in the model
through a regularization of the well-known Coulomb law, here generalized to the case of adhesive contact and assuming
thermal dependence of the friction coefficient.

The mathematics of (unilateral) contact problems, possibly with friction,
has received notable attention lately, as
attested, among others,  by the recent monographs on the topic
\cite{eck05}  
and
 \cite{sofonea-han-shillor}.
The analysis of \textsc{Fr\'emond}'s model for  contact with adhesion   has been developed
over the last years in a  series of papers: in \cite{bbr1}--\cite{bbr2} we have focused on
existence, uniqueness, and long-time behavior results on  the isothermal system for (frictionless) adhesive contact.
 Thermal effects both on the contact surface, and in the bulk domain, have been
included in the later papers  \cite{bbr3}--\cite{bbr4}, respectively tackling the well-posedness and
large-time behavior analysis. In \cite{bbr5} we have dealt with a (isothermal) unilateral contact problem taking into account both
friction and adhesion. As far as we know, the present contribution is the first one
addressing a model which combines  adhesion,  frictional  contact,  and thermal effects.

\paragraph{The PDE system.}
Before  stating the PDE system under investigation,  let us   fix the notation for the  normal and tangential component of vectors and tensors
that  will
 be used in what follows.
\begin{notation}
\upshape
Given a vector   $\vv\in \R^3$,  we denote by $\vvn$ and $\vvt$ its normal component and
its tangential part, defined  on the contact surface
by
\begin{equation}
\label{notation-vettore} \vvn:= \vv \cdot \mathbf{n}, \quad \vvt:=
\vv - \vvn \mathbf{n},
\end{equation}
 where $\mathbf{n}$ denotes the outward unit normal vector to the boundary.
Analogously, the normal component and the tangential part of the stress tensor
$\ssigma$ are denoted by $\sigman$ and $\sigmat$, and defined by
\begin{equation}
\label{notation-tensore} \sigman:= \ssigma \mathbf{n} \cdot
\mathbf{n}, \quad \sigmat:=  \ssigma \mathbf{n}  - \sigman
\mathbf{n}.
\end{equation}
\end{notation}
\noindent
 We address the PDE system
\begin{align}
\label{e1} &\partial_t (\ln(\vartheta)) - \dive (\partial_t\mathbf{u}) -\Delta
\vartheta= h \qquad \text{in $\Omega \times (0,T)$,}
\\
\label{condteta} &\partial_{\mathbf{n}} \vartheta= \begin{cases} 0 & \text{in
$(\partial \Omega \setminus \Gamma_c) \times (0,T)$,}
\\
-k(\chi) (\vartheta-\vartheta_s)-\fc'(\teta-\teta_s)  {\partial
I_{(-\infty,0]}( \uun )} | \dotut|  & \text{in $ \Gamma_c \times
(0,T)$,}
\end{cases}
\\
\label{eqtetas} &\partial_t (\ln(\vartheta_s)) - \partial_t
(\lambda(\chi))  -\Delta \vartheta_s= k(\chi)
(\vartheta-\vartheta_s)+ \fc'(\teta-\teta_s)  {\partial
I_{(-\infty,0]}( \uun )} | \dotut|  \quad  \text{ in $\Gamma_c \times
(0,T)$,}
\\
\label{condtetas} &\partial_{\nn_s} \vartheta_s =0 \qquad \text{in
$\partial \Gamma_c \times (0,T)$,}
\\
&-  \hbox{div}\ssigma={\bf f}  \ \text{ with } \ \ssigma=K_e\tensore+ K_v\tensoret+\vartheta \matrid \quad \hbox{in }\Omega\times (0,T),\label{eqI}\\
&{\bf u}={\bf 0}\quad\hbox{in }\Gamma_{1}\times (0,T),
\quad  \ssigma{\bf n}={\bf g} \quad \hbox{in }\Gamma_{2}\times (0,T),\label{condIi}\\
&\sigman\in -\chi \uun - {\partial I_{(-\infty,0]}(\uun)}
\quad\hbox{in }{\Gamma_c} \times (0,T),\label{condIii}\\
& \sigmat\in -\chi \uut -\fc(\teta-\teta_s){\partial
I_{(-\infty,0]}( \uun )} {\bf d} (\dotu)
\quad\hbox{in }{\Gamma_c} \times (0,T),\label{condIiii}\\
&\partial_t{\chi}+\delta  \partial I_{(-\infty,0]}(\partial_t{\chi})
-\Delta\chi+\partial I_{[0,1]}(\chi) + \sigma'(\chi) \ni
-\lambda'(\chi) (\vartheta_s-\vartheta_{\mathrm{eq}})- \frac 1 2
|\uu|^2\quad\hbox{in }{\Gamma_c} \times (0,T),\label{eqII}
\\
&\partial_{\nn_s} \chi=0 \quad \text{in $\partial {\Gamma_c} \times
(0,T)$} \label{bord-chi}
\end{align}
with $\delta \geq 0$, where
\begin{equation}
 \label{formu-d}
\mathbf{d}(\vv) = \begin{cases}
\frac{\vv_{\mathrm{T}}}{|\vv_{\mathrm{T}} |} & \text{if
$\vv_{\mathrm{T}} \neq \mathbf{0}$}
\\
\{ \ww_{\mathrm{T}}\, : \ \ww \in \overline{B}_1 \} & \text{if
$\vv_{\mathrm{T}} = \mathbf{0}$,}
\end{cases}
 \end{equation}
 and $\overline{B}_1$ is
 the closed unit ball in $\R^3$. Note that $\mathbf{d}: \R^3 \rightrightarrows \R^3$ is the subdifferential
 of the function   $j: \R^3 \to [0,+\infty)$ defined by
 $j(\vv)= |\vv_{\mathrm{T}} |$,   cf.\ \eqref{def-j} later on.
In what follows, $\Omega$ is a (sufficiently smooth) bounded domain in $\R^3$,
in which the body is located,
with $\partial\Omega= \overline{\Gamma}_1 \cup \overline{\Gamma}_2 \cup \overline{\Gamma}_c$ and
 ${\Gamma_c}$
 the contact surface between  the body and  the  rigid support.
 From now on, we will suppose that $\gc$
  is a smooth
bounded domain of $\R^2$
 (one may think of a flat surface),  and we denote by $\mathbf{n}_s$ the outward unit
 normal vector to $\partial \gc$.  Let us point out that  $\uu$ is the vector of small
 displacements,
  $\ssigma$ the stress tensor,
  whereas $\chi$ is the so-called \emph{adhesion
 parameter}, which denotes the fraction of active microscopic bonds
 on the contact surface.
As for the thermal variables,
  $\teta$ is  the absolute temperature of the body $\Omega$ whereas $\teta_s$
is the absolute temperature of the adhesive  substance  on $\gc$.
Here and in what follows, we shall write $v$, in place of $v|_{\Gamma_c}$, for  the trace
on $\Gamma_c$ of a function $v$ defined in $\Omega$.

While postponing to Section \ref{s:2} the rigorous derivation of the PDE
system (\ref{e1}--\ref{bord-chi}) based on the laws of Thermomechanics,
let us briefly comment on the nonlinearities therein involved.
First, we observe that the singular terms $\ln(\teta)$ and $\ln(\tetas)$ in \eqref{e1} and \eqref{eqtetas} force the temperatures $\teta$ and $\tetas$
to remain strictly positive, which is necessary to get thermodynamical consistency.  The
multivalued
 operator $\partial I_C:
\R \rightrightarrows \R$
(with $C$ the interval $[0,1]$ or the half-line $(-\infty,0])$) is the subdifferential (in the sense of convex analysis) of the indicator function of the convex set $C$.
 We recall that   $\partial I_C(y) \neq \emptyset $
 if and only if $y \in C$, with
 $\partial I_C(y)=\{0\}$ if $y$ is in  the   interior of $C$, while $\partial I_C(y) $ is   the normal cone to the
boundary of $C$ if $y\in\partial C$. By means of indicator functions, we enforce the  unilateral condition $\uun\leq 0$, as well as  the constraint   $\chi\in[0,1]$ and, if $\delta >0$, the
irreversibility of the damage process, viz.\  $\partial_t\chi\leq0$. Then,  the subdifferential terms
occurring in \eqref{condteta},   \eqref{eqtetas},  \eqref{condIii},   \eqref{condIiii},   and \eqref{eqII}
  represent
internal forces which  activate
 to prevent $\uun$, $\chi$, and $\partial_t\chi$  from taking values outside the physically admissible range.
 Finally,  $K_e$ and $K_v$ are positive-definite tensors,
$k$, $\lambda$,  $\sigma$, and   $\fc$ are sufficiently smooth functions, whereas
 $\teta_\mathrm{eq}$ is a critical temperature  and $h$, $\mathbf{f}$, and
 $\mathbf{g}$ are given data:  we refer to  Section 2 for all details on their physical
 meaning, and to Section  \ref{ss:2.2} for the precise assumptions on them.

As for  conditions \eqref{condIii}--\eqref{condIiii}, let us observe that
(\ref{condIii}) can be rephrased as
\begin{align}
&\uun\leq 0,\quad \sigman+  \chi \uun \leq 0,\quad \uun (\sigman+
\chi \uun)=0 \quad\hbox{in }{\Gamma_c} \times (0,T),\label{Signorini}
\end{align}
which, in the case $\chi=0$, reduce to the classical Signorini
conditions for unilateral contact. Conversely, when $0 < \chi \leq 1$, $\sigman$ can be
positive, namely the  action  of the adhesive substance on $\gc$  prevents
separation when a tension is applied.
 Moreover, in view of   \eqref{condIii}, \eqref{condIiii} can
be expressed by
\begin{equation}
\label{Coulomb}
\begin{aligned}
&|\sigmat +\chi \uut|\leq  \fc(\teta-\teta_s)|\sigman + \chi \uun |,\\
&|\sigmat +\chi \uut|< \fc(\teta-\teta_s)|\sigman + \chi \uun |\Longrightarrow \dotut = \mathbf{0},\\
&|\sigmat +\chi \uut|= \fc(\teta-\teta_s) |\sigman + \chi \uun |\Longrightarrow
\exists\,  \nu \geq 0: \ \dotut=-\nu (\sigmat +\chi \uut),
\end{aligned}
\end{equation}
which generalize the {\it dry friction} Coulomb law, to the case
when adhesion effects are taken into account. Note  that the \emph{positive} function $\fc$ in \eqref{Coulomb}   is the
 \emph{friction coefficient}.
\paragraph{Related literature.}
We refer to~\cite{eck05} and \cite{sofonea-han-shillor},
for a general  survey of the
literature on models of (unilateral) contact, as well as to the references in  \cite{bbr1,bbr2}
for \emph{isothermal} models  of contact with adhesion, possibly with friction (see also \cite{bbr5}).
Here we will just focus on (a partial review of) temperature-dependent models for frictional contact: as previously said,
to the best of our knowledge the PDE system (\ref{e1}--\ref{bord-chi})
 is the first one modelling
 \emph{adhesive contact} with  frictional
\emph{and} thermal effects.

The major difficulty in the analysis of unilateral  contact problems with friction is the presence of constraints on both the
(normal)
displacement
$\uun$ and the tangential velocity $\dotut$.  It can be overcome only by resorting  to suitable simplifications in the related
PDE systems.
Over the years, several options have bee explored in this direction, without affecting the
 physical consistency of the underlying models.

 Since \textsc{Duvaut}'s pioneering work \cite{duvaut},
a commonly accepted
approximation of the dry friction Coulomb law \eqref{Coulomb}, which we will also adopt,
 involves the usage of a nonlocal regularizing
operator $\Reg $, cf.\ \eqref{replacement} below.
An alternative regularization of the classical Coulomb law is the so-called
\emph{SJK-Coulomb} law of friction, which is for example considered  in \cite{rochdi-shillor2000} and in \cite{akrs2002},
dealing with \emph{quasistatic} models for thermoviscoelastic contact with friction.

An alternative possibility is to replace the Signorini contact conditions \eqref{Signorini} with a \emph{normal compliance}
condition, which allows for the interpenetration of the surface asperities and thus for dispensing with the unilateral constraint on
$\uun$.
 Analytically, the normal compliance law corresponds to a penalization of the subdifferential operator $\partial I_{(-\infty,0]}$ in
\eqref{condIii}--\eqref{condIiii}. In this connection, we refer e.g.\ to \cite{and-kut-shil97}, analyzing a
\emph{dynamic} model for frictional contact of a thermoviscoelastic body with a rigid obstacle, with the power law normal compliance condition for unilateral contact, and the corresponding
 generalization of
Coulomb's law of dry friction. Unilateral contact is  modelled
by a  normal compliance condition in
\cite{Andrews} as well, where a dynamic contact problem for a thermoviscoelastic body, with frictional and wear effects on the contact surface, is investigated.  A wide class of dynamic frictional contact problems in thermoelasticity and thermoviscoelasticity is also tackled in \cite{Figue-Trabucho}, with contact rendered by means
of  a normal compliance law.

In an extensive series of papers (cf.\ the references in the monograph \cite{eck05}),
  \textsc{C. Eck \& J.
Jaru\v{s}ek}  developed  a different approach, which   enabled   them
to prove existence results for  dynamic contact
problems, coupling \emph{dry} friction and Signorini contact,
\emph{without} recurring to any regularizing operator. However, they
 used   a different form of Signorini conditions, expressed not in terms
of $\uu$ but of $\dotu$. They observed   that this different law can be
interpreted as a
first-order approximation with respect
to the time variable, realistic for a short time interval and for a vanishing
initial gap between the body and the obstacle, and hence  it is physically interesting.
Within this modelling approach, existence results for
contact problems with friction  and thermal effects  were   for instance obtained in
\cite{Eck-Jarusek} and in \cite{Eck2002}.

Let us stress that, in all of these contributions the existence of solutions is proved for a weak
formulation of the related PDE systems, which involves   a variational inequality, and not
an evolutionary differential inclusion,
 for the displacement.

\paragraph{Analytical difficulties.}
From now on, we will take $\delta=0$ and thus confine ourselves to the investigation of   the \emph{reversible} case.
We postpone the analysis of the \emph{irreversible} case
$\delta>0$ to the forthcoming paper \cite{bbr7}, where we will address also
slightly different equations for $\teta$ and $\teta_s$
obtained by a different choice of the bulk and surface entropy fluxes
(here given by $-\nabla\teta$ and $-\nabla\tetas$, respectively). For more details, we refer to the
  upcoming  Remark \ref{remflusso}.

Still,
the analysis of system (\ref{e1}--\ref{bord-chi}) is fraught with
obstacles:
\begin{compactenum}
\item[1)]
 First of all, the highly nonlinear character of the
 equations, due to the presence of several singular and  multivalued operators.
 In particular, 
since Coulomb  friction is included in the model, a multivalued operator
occurs even in the coupling terms between the equations for $\teta$,
$\teta_s$, and~$\uu$, and \eqref{condIiii} features the product of two subdifferentials.
\item[2)]
A second analytical difficulty is given by the coupling of \emph{bulk}
and \emph{(contact) surface} equations, which requires sufficient
 regularity of the bulk variables $\teta$ and $\uu$ for their
traces on $\gc$ to make sense.
\item[3)] In turn, the mixed boundary conditions on $\uu$ do not
allow for elliptic regularity estimates which could significantly
enhance the spatial regularity of $\uu$ and $\partial_t \uu$. In particular,
we are not in the position to obtain the   $H^2 (\Omega;\R^3)$-regularity for
$\uu$ and $\partial_t{\uu}$. The same problem arises
 for $\teta$, due to the
third type boundary condition \eqref{condteta}.
\item[4)] Additionally, the temperature  equations \eqref{e1} and \eqref{eqtetas}
need to be carefully handled because of the singular character of
the terms $\partial_t \ln(\teta)$ and $\partial_t \ln(\teta_s)$
therein occurring. In particular, they do not allow but for poor
time-regularity of $\teta$ and $\teta_s$.
\end{compactenum}
Let us stress that  the presence of several multivalued operators in
system (\ref{e1}--\ref{bord-chi}) is due to the fact that,
 all the constraints on the internal variables, as well as the unilateral contact
conditions and the friction law, are rendered by means of subdifferential operators. Therefore,
in our opinion
 the resulting formulation of the problem provides more complete information than those formulations
   based on
variational inequalities. Indeed, it
   enables us to clearly identify the internal forces and reactions  which derive  by
 the enforcement of the
 physical constraints.

In particular,  let us dwell on  the
coupling of unilateral contact and the
dry friction Coulomb law: as already mentioned, it  introduces severe mathematical
difficulties,  unresolved even in the
case without adhesion. These problems are  mainly related to the fact that we cannot control
$\sigman$ and $\sigmat$ in \eqref{condIii} and \eqref{condIiii} pointwise, due to the presence of
 two different non-smooth operators.  For this reason,
  as  done in  \cite{bbr5} and following \textsc{Duvaut}'s work \cite{duvaut},
 we are led to regularize \eqref{condIiii}   by resorting to  a {\it nonlocal version} of the Coulomb law. The
idea is to replace the nonlinearity in \eqref{condIiii} involving friction  by
the  term
\begin{equation}
\label{replacement} \text{
 $ \fc(\teta-\teta_s)|\Reg( \partial I_{(-\infty,0]}(\uun)| \mathbf{d}(\partial_t\uu)$.}
\end{equation}
Accordingly, the coupling term
$\fc'(\teta-\teta_s)  \partial
I_{(-\infty,0]}( \uun ) | \dotut| $ in \eqref{condteta} and \eqref{eqtetas} is replaced by
\begin{equation}
\label{replacement2}
\fc'(\teta-\teta_s)   | \Reg(\partial
I_{(-\infty,0]}( \uun )) |  | \dotut|.
\end{equation}{
  In \eqref{replacement} and \eqref{replacement2}, $\Reg$ is a
 regularization operator, taking into account
 nonlocal interactions on the contact surface.
 We refer to \cite{bbr5} for further comments and references
 to several items in the literature on frictional contact problems, where this regularization is adopted.
  In the forthcoming Remark \ref{regolarizzo}, we hint at
 the modeling  derivation of the  PDE system (\ref{e1}--\ref{bord-chi}) with
  the terms \eqref{replacement} and \eqref{replacement2}, while
  in Example \ref{ex-reg} below we give  the construction of an operator $\Reg$ complying with the
  conditions we will need to impose for our existence result.

 The difficulties attached to the
 coupling between thermal and frictional effects are apparent in the
 dependence of the friction coefficient $\fc$ on the thermal gap $\teta-\teta_s$.
 In order to deal with this, and to tackle the (passage to the limit in the approximation of the) terms
 \eqref{replacement} and \eqref{replacement2}, it is crucial to prove strong
 compactness for (the sequences approximating) $\teta$ and $\teta_s$ in suitable spaces.
 A key step in this direction is the derivation of an estimate for $\teta$ in
 $\BV (0,T;W^{1,3+\epsilon}(\Omega)')$ and for $\teta_s $ in  $ \BV (0,T;W^{1,2+\epsilon}(\gc)')$ for
 some $\epsilon>0$. Combining this information with a suitable version of the Lions-Aubin compactness
 theorem generalized to $\BV$ spaces (see, e.g., \cite{roub-book}), we will deduce the desired compactness
 for (the sequences approximating) $\teta$ and $\teta_s$.

\paragraph{Our results.}
The main result of this paper  (cf.\ Theorem \ref{mainth:1} in Section \ref{ss:2.3})
 states  the existence of (global-in-time) solutions
to  the Cauchy problem for system (\ref{e1}--\ref{bord-chi}).

In order to prove it, we introduce a carefully tailored approximate problem for
system (\ref{e1}--\ref{bord-chi}),
in which we replace some of the nonlinearities therein involved by suitable  Moreau-Yosida-type regularizations.
The existence of solutions for the approximate problem is obtained via
a Schauder fixed point argument,  yielding a local-in-time
existence result. The latter is   combined with a series of \emph{global} a priori estimates,
holding with constants independent of the regularization parameter, 
 which
 enable us to extend the local solution to a global one.
These very same estimates are then exploited in the passage to the limit
in the approximate problem,
 together with techniques from maximal monotone operator theory and
refined compactness results. In this way, we carry out  the proof of Thm.\ \ref{mainth:1}.
\paragraph{Plan of the paper.}
In Section \ref{s:2} we  outline  the rigorous  derivation of system (\ref{e1}--\ref{bord-chi})
and we comment on its thermodynamical consistency. Hence, in Section
\ref{s:setup}, after enlisting all of the assumptions on the nonlinearities featured in (\ref{e1}--\ref{bord-chi}) and on the problem data, we set up the variational formulation of the problem and state
our main existence result (cf.\ Theorem\ \ref{mainth:1} below). Section \ref{s:3} is devoted to the
proof of Thm.\ \ref{th:exist-approx-rev}, ensuring the
 existence of
 (global-in-time)
 solutions to  the
approximate problem. 
 Finally, in Section \ref{s:3.3} we pass to the limit with the approximation parameter
and conclude the proof of Thm.\ \ref{mainth:1}.

\section{The model}
\label{s:2}
\noindent
In this section we
 sketch the modeling approach underlying the derivation of our  phase-transition type system
 for a contact problem, in which adhesion and friction are taken into account in a non-isothermal framework.
 The equations, written in the bulk  domain and on the contact surface, are recovered by
  the
 general laws of
 Thermomechanics with
 energies and dissipation potentials defined in $\Omega$ and on $\Gamma_c$.
 They can be considered as balance equations,  based on
 a generalization
of the principle of virtual powers.   Such a generalization accounts   for  micro-movements and micro-forces, which are responsible for the breaking
 of the adhesive bonds on the contact surface.
  We will not   derive the PDE system (\ref{e1}--\ref{bord-chi}) in  full detail, referring the reader to the discussions  in
   \cite{bbr3} on  the modeling of thermal effects in contact with adhesion,     and in
    \cite{bbr5} for
a (isothermal) model of adhesive contact with friction. Here, we will rather briefly
outline the main ingredients of the derivation,  and  just focus  on
the modeling novelty of this paper.

The phenomenon of adhesive contact is described  by state
and dissipative variables, defining the equilibrium and the evolution of the system, respectively.
The state variables are  the symmetric strain
$\tensore$, the trace of  the  vector of small displacements  $\uu$ on the contact surface,
 the surface  adhesion  parameter $\chi$, its gradient $\nabla\chi$, and
 the absolute  temperatures $\teta$ and $\teta_s$ of the bulk domain and of
the  contact surface, respectively.
The evolution is derived in terms of a pseudo-potential of dissipation, depending on the dissipative
variables  $\partial_t\uu$, $\partial_t\chi$, $\nabla\teta$, $\nabla\teta_s$, and  the thermal gap on the contact surface
$\teta-\teta_s$.
\paragraph{The free energy and the dissipation potential.} The free energy of our system is written as follows
$$
\Psi=\Psi_\Omega+\Psi_\Gamma,
$$
 $\Psi_\Omega$ being defined in $\Omega$ and $\Psi_\Gamma$ on $\Gamma_c$.
The bulk  contribution  $\Psi_\Omega$ is given by
\begin{equation}\label{freeO}
\Psi_\Omega=\teta(1-\ln(\teta))+\teta\mathrm{tr}(\tensore)+\frac 1 2 \tensore K_e\tensore,
\end{equation}
with  $K_e$ the elasticity tensor  and  $\mathrm{tr}(\tensore)$ the trace of $\tensore$.
 Note that, for the sake of simplicity, we have  taken both the specific heat and the thermal
expansion coefficient
 equal to $1$. The free energy
  on the contact surface is defined by
\begin{align}\label{freeG}
\Psi_{\Gamma}= & \teta_s(1-\ln(\teta_s))+\lambda(\chi)(\teta_s-\teta_{\mathrm{eq}})+I_{[0,1]}(\chi)+\sigma(\chi)\\\no
&\quad +\frac {\cn}{2} \chi (\uun)^2+ \frac {\ct}{2}
\chi \vert{\uut}\vert^2+
I_{(-\infty,0]}(\uun)+\frac{ \kappa_s}{2}\vert\nabla\chi\vert^2,
\end{align}
 where
 $\cn, \, \ct, \, \kappa_s$ are  positive constants.
 Note that $\cn$ and
$\ct$ (which are the adhesive coefficients for the normal and tangential
components, respectively) a priori  may
be different, due to possible anisotropy in the response of the material
to stresses.   However,
 for the sake of simplicity in what follows  we let $\cn=\ct=\kappa_s=1$.
In \eqref{freeG},
 $\sigma$ is a sufficiently smooth function accounting for some internal properties of the
  adhesive substance on $\gc$,
such as cohesion:  the simplest  form for the cohesive   contribution to  the energy is $\sigma(\chi)=w_s(1-\chi)$ for some $w_s>0$.
Let us now briefly comment on the internal constraints.
 The energy is defined for any value of the state variables, but it
 is set equal to $+\infty$ if the variables assume values which are not physically consistent.
 Indeed, the indicator function
$I_{(-\infty,0]}$ enforces the internal constraint  $\uun\leq 0$,
 i.e.\ it  renders the impenetrability condition between the body
and the support. Finally, the term $I_{[0,1]}(\chi)$ forces $\chi$ to
 take values in the interval $[0,1]$.
 The function $\lambda$ provides the latent heat $\lambda'$,
while $\teta_{\mathrm{eq}}$
 is a  critical temperature,  which governs the evolution of the cohesion of the adhesive substance with respect to the temperature.

 As far as dissipation is concerned,  we consider in particular dissipative effects on the boundary due to friction. The pseudo-potential
is
$$
\Phi=\Phi_\Omega+\Phi_\Gamma
$$
 $\Phi_\Omega$ being defined in $\Omega$ and $\Phi_\Gamma$ on $\Gamma_c$.
For the bulk  contribution $ \Phi_\Omega$ we have
\begin{align}
\label{pseudovol} \Phi_\Omega:= \frac 1 2 \tensoret K_v\tensoret+\frac {\alpha(\teta)} {2}|\nabla\teta|^2,
\end{align}
while the contact surface contribution $\Phi_{{\Gamma}}$ reads
\begin{align}
\label{pseudobordo}
\begin{aligned}
\Phi_{{\Gamma}}: & = \fc(\teta-\teta_s) \left|-R_N +  \uun \chi
\right| j(\dotu)+ \frac {1}
2\vert\partial_t{\chi}\vert^2\\ & \quad +\delta I_{(-\infty,0]}(\partial_t{\chi})+\frac {\alpha_s(\teta_s)} 2|\nabla\teta_s|^2+\frac
1 2k(\chi)(\teta-\teta_s)^2,
\end{aligned}
\end{align}
where the positive function
$\fc$ has the meaning of a  friction coefficient,   $R_N$ will be specified later (see \eqref{reazione}),
the function $j$ is
\begin{equation}
\label{def-j}
 j(\vv) = | \vv_{\mathrm{T}} | \quad \text{for all
$\vv \in \R^3.$}
\end{equation}
$\delta\geq0$, and
$\alpha$ and $\alpha_s$ are the  thermal diffusion coefficients in the bulk domain and on the contact surface, respectively.
The  positive (and sufficiently smooth) function $k$  is also a  contact surface thermal diffusion coefficient,
 accounting for the heat
exchange between the body and the  adhesive substance on $\gc$.
The assumptions on all of  these functions have to guarantee that the pseudo-potential of dissipation
is a non-negative function, convex w.r.t.\  the dissipative variables (note that $\teta$ and $\teta_s$ are not dissipative variables),
and  that dissipation is zero once the dissipative variables are equal to zero  (see also the
 upcoming discussion on the \emph{thermodynamical consistency} of the model).
Observe that, if $\delta>0$,   the model  encompasses  an {\it irreversible} evolution
for the damage-type variable $\chi$, as  it enforces    $\partial_t\chi\leq0$.
 Furthermore, the $1$-homogeneity of
 the function $j$ in \eqref{def-j}
reflects  the rate-independent character of frictional
dissipation.
\begin{remark}
\upshape
Note that the functions $\sigma $  and $\lambda$ in the free energy \eqref{freeG}  are related to the cohesion of the adhesive substance, as it results from \eqref{eqII}. Indeed, the term $-\sigma'(\chi)-\lambda'(\chi)(\teta_s-\teta_{\mathrm{eq}})$ is a (generalized)  cohesion of the material, depending on the temperature.  It represents a threshold for $\frac 1 2|{\bf u}|^2$ to produce damage.
\end{remark}

The main novelty of this paper in comparison with \cite{bbr5} is the fact that the friction coefficient $\fc$
depends on the  thermal gap  $\teta-\teta_s$ (cf. \cite[p.\ 12]{sofonea-han-shillor}).
This  relies on  the mo\-de\-ling ansatz that \emph{friction generates heat}
(cf.\ \cite{Eck-Jarusek}), so that  ultimately  we have the contribution
 $\fc'(\teta-\teta_s)  {\partial I_{(-\infty,0]}( \uun )} |
\dotut|$  as a source of heat on the contact surface $\gc$, both in  the boundary condition
\eqref{condteta} for $\teta$ and in equation \eqref{eqtetas} for $\teta_s$.
\paragraph{The balance equations.}
The equations for the  evolution of  the temperature variables are recovered from
entropy balance equations in $\Omega$ and $\Gamma_c$ (cf. \cite{BFR} for a detailed motivation
 of this approach and \cite{bbr3}
for the application to contact problems), i.e.
\begin{equation}\label{entrpI}
\partial_t s+\hbox{div }{\bf Q}=h\hbox{ in }\Omega \times (0,T),
\quad{\bf Q}\cdot{\bf n}=F\hbox{ on }  \partial\Omega \times (0,T)
\end{equation}
with
$h$ an external volume source, $F$ the  entropy flux through the boundary, $s$ the entropy, ${\bf Q}$ the entropy flux
 in the bulk domain,
and
\begin{equation}\label{entropII}
\partial_t s_s+\hbox{div }{\bf Q}_s=F\hbox{ in }\Gamma_c  \times (0,T), \quad{\bf Q}_s\cdot{\bf n}_s=0\hbox{ on }\partial\Gamma_c  \times (0,T),
\end{equation}
where we have denoted by $s_s$ and ${\bf Q}_s$ the entropy and the entropy flux on the contact surface, respectively. Note that,
on $\Gamma_c$ the term $F$ (involved   in the  boundary condition for \eqref{entrpI})  is the
 entropy provided by the adhesive substance to the body.
 We couple \eqref{entrpI} and \eqref{entropII}  with the
generalized momentum equations for macroscopic motions in $\Omega$ and micro-movements in
$\Gamma_c$, viz.
\begin{align}\label{momI}
&-\hbox{div }{\ssigma}={\bf f}\hbox{ in }\Omega \times (0,T),\\
&{\ssigma}{\bf n}=-{\bf R}\hbox{ on }\Gamma_c\times (0,T),\quad{\bf u}={\bf 0}\hbox{ on }\Gamma_1\times (0,T),
\quad{\mathbf\ssigma}{\bf n}={\bf g}\hbox{ on }\Gamma_2\times (0,T)
\end{align}
where  recall that  $\ssigma$ is the macroscopic stress tensor,
${\bf f}$   is  a volume applied force, ${\bf g}$ is  a known traction,   $-{\bf R}$ is the action of the obstacle on the solid,  and
\begin{equation}\label{momII}
B-\hbox{div }{\bf H}=0\hbox{ in }\Gamma_c\times (0,T),\quad{\bf H}\cdot{\bf n}_s=0\hbox{ on }\partial\Gamma_c\times (0,T),
\end{equation}
with ${\bf H}$ and $B$  microscopic internal stresses,  responsible for the damage of the
adhesive bonds between the body and the support.
\paragraph{The constitutive equations.}
To recover the PDE system (\ref{e1}--\ref{bord-chi}), we
have to combine  \eqref{entrpI}--\eqref{entropII}   with suitable constitutive relations, for the involved physical quantities, in terms of
$\Psi$ and $\Phi$.
We have
\begin{equation}\label{entropie}
s=-\frac{\partial\Psi_\Omega}{\partial\teta},\quad s_s=-\frac{\partial\Psi_\Gamma}{\partial\teta_s}
\end{equation}
and
\begin{equation}\label{flussi}
{\bf Q}=-\frac{\partial\Phi_\Omega}{\partial\nabla\teta},\qquad{\bf Q}_s=-\frac{\partial\Phi_\Gamma}{\partial\nabla\teta_s}.
\end{equation}
As for as the flux through the boundary $F$, we  impose
\begin{equation}\label{flussbordo}
 F=\frac{\partial\Phi_\Gamma}{\partial(\teta-\teta_s)}\hbox{ on }\Gamma_c,\quad F=0\hbox{ on the remaning part}.
\end{equation}
Then, we prescribe  for  (the dissipative and non-dissipative contributions to) the stress tensor
\begin{equation}\label{stress}
 {\ssigma}= {\ssigma}^{nd}+ {\ssigma}^d=\frac{\partial\Psi_\Omega}{\partial\tensore}+\frac{\partial\Phi_\Omega}{\partial\tensoret},
\end{equation}
and the reaction ${\bf R}=R_N{\bf n}+{\bf R}_T$   reads
\begin{align}\label{reazione}
 &R_N=R_N^{nd}=\frac{\partial\Psi_\Gamma}{\partial{\uun}}\\
 \label{reazione-tangential}
 &{\bf R}_T={\bf R}_T^{nd}+{\bf R}_T^d=\frac{\partial\Psi_\Gamma}{\partial{\uut}}+\frac{\partial\Phi_\Gamma}{ \partial(\partial_t\uut)}.
\end{align}
 Finally, $B$ and $\mathbf{H}$ are given by
\begin{align}
&
\label{bimaiuscolo}
B= B^{nd}+B^d=\frac{\partial\Psi_\Gamma}{\partial\chi}+\frac{\partial\Phi_\Gamma}{ \partial(\partial_t \chi)}\,,
\\
&
\label{accamaiuscolo}
\mathbf{H}= {\bf H}^{nd}+{\bf H}^d=\frac{\partial\Psi_\Gamma}{\partial \nabla\chi}+ \frac{\partial\Phi_\Gamma}{ \partial(\nabla (\partial_t \chi))}\,.
\end{align}
\begin{remark}\label{remflusso}
\upshape
 Let us briefly comment on the entropy flux  laws for ${\bf Q}$ and ${\bf Q}_s$:
   as shown by \eqref{pseudovol} and \eqref{pseudobordo}, they
  depend on the choice of  the  thermal diffusion coefficients  $\alpha$ and $\alpha_s$.
 In general    the latter are  functions  of  the temperature variables $\teta$ and
  $\teta_s$
  (cf.\ e.g.\ \cite{eck05}).
 If $\alpha(\teta)=\alpha>0$ we get ${\bf Q}=-\alpha\nabla\teta$. In this case,
 for  the heat flux ${\bf q}=\teta{\bf Q}$
 we  obtain   ${\bf q}=-\alpha\teta\nabla\teta=-\frac\alpha 2\nabla\teta^2$.
 Analogously, if $\alpha(\teta)=\frac 1\teta$
 we get ${\bf Q}=-\nabla\ln(\teta)$ and for ${\bf q}$ the standard Fourier law ${\bf q}=-\nabla\teta$. Hence, in the case, e.g.,
 $\alpha(\teta)=\teta$ (which is admissible for thermodynamics) we have ${\bf Q}=-\teta\nabla\teta=-\frac 1 2\nabla\teta^2$. Analogous considerations  hold for ${\bf Q}_s$.
\end{remark}

 In what follows,
we   fix as heat flux laws
 \begin{equation}\label{heat-fluxes}
 \alpha(\teta) \equiv \alpha=1,\quad\alpha_s(\teta_s) \equiv \alpha_s=1.
 \end{equation}
 Furthermore,
 as already mentioned,   we will confine our  analysis  to the  case of a  \emph{reversible} evolution
 for the damage-type variable $\chi$,  taking
 \[
 \text{$\delta=0$ in $\Phi_\Gamma$.}
 \]
 With these choices,
 combining  \eqref{entropie}--\eqref{accamaiuscolo}  with \eqref{entrpI}--\eqref{momII}
 we derive the PDE system (\ref{e1}--\ref{bord-chi}).
\begin{remark}\label{regolarizzo}
\upshape
 Let us point out that, the PDE system tackled in this paper, featuring the
 non-local regularization for the Coulomb law $\Reg$, can be derived
 following  the very same  procedure described  above. Indeed,  it is sufficient to replace $\Phi_\Gamma$ by
 \begin{align}
\label{pseudobordoreg}
\begin{aligned}
 \Phi_{{\Gamma}}: & = \fc(\teta-\teta_s)     \left| \Reg(-R_N +  \uun \chi) \right| j(\dotu)+ \frac {1}
2\vert\partial_t{\chi}\vert^2
\\ & \quad
+\delta I_{(-\infty,0]}(\partial_t{\chi})+\frac {\alpha_s(\teta_s)} 2|\nabla\teta_s|^2+\frac
1 2k(\chi)(\teta-\teta_s)^2.
\end{aligned}
\end{align}
\end{remark}

\paragraph{\bf Thermodynamical consistency.}
Let us now  briefly comment on the derivation of the model in relation to its thermodynamical consistency.
In particular we discuss the validity of the Clausius-Duhem inequality  for the whole thermomechanical system we are dealing with. The latter consists of the body $\Omega$ and the contact adhesive surface $\Gamma_c$, encompassing 
 the dissipative heat exchange between   the body  and  the adhesive substance  on $\Gamma_c$, due to the difference of the  temperatures.
 Recall that we have derived the
 evolution equations for the temperature variables from an
 entropy balance in the domain $\Omega$ and on $\Gamma_c$. To prove thermodynamical consistency (which corresponds to  showing  that dissipation is positive), we have to discuss the entropy balance for the whole system, i.e.\ also including the  interactions between the bulk domain and the adhesive substance.

As far as the domain $\Omega$
 is concerned,  this property  is  fairly standard
(the reader may refer to \cite{fre}): it is ensured once $\Phi_\Omega$
has the characteristics of a  pseudo-potential of dissipation. In particular, this
requires that, in \eqref{pseudovol} the tensor  $K_v$ is positive definite, and  the
thermal diffusion coefficient  $\alpha$ is non-negative. This  leads 
 to  positivity of the dissipation, viz.\
$$
\varepsilon(\partial_t{\bf u})K_v\varepsilon(\partial_t{\bf u })+\alpha(\teta)|\nabla\teta|^2\geq 0.
$$
Then, the Clausius-Duhem inequality is obtained by writing the energy balance and exploiting the already specified constitutive relations.

Second, the internal energy balance on $\Gamma_c$ is recovered by the first principle,
in which the effective power of the internal forces responsible for the damage process is included.
Again using the constitutive relations, it yields
\begin{equation}\label{Iprincol}
\theta_s\partial_t s_s+\theta_s\hbox{div }{\bf Q}_s=\theta_s Q_s+B^d\partial_t\chi
+{\bf H}^d\cdot\nabla\partial_t\chi-{\bf Q}_s\cdot\nabla\theta_s,
\end{equation}
where we denote by ${Q}_s$ the entropy received by the adhesive substance from the solid and $B^d,{\bf H}^d,{\bf Q}_s$ are defined by \eqref{bimaiuscolo},
 \eqref{accamaiuscolo}, and \eqref{flussi}.
Note that $Q_s$ actually represents an entropy source. In our model we have prescribed that  (see \eqref{entrpI}--\eqref{entropII})
\begin{equation}\label{ipotesiimp}
-Q=Q_s=F,
\end{equation}
$Q$ denoting  the entropy received by the solid from the adhesive substance. Note that by \eqref{entrpI} we have
\begin{equation}\label{Iprincont}
{\bf Q}\cdot{\bf n}+Q=0.
\end{equation}

To deduce the  conditions  on $F$ which ensure the thermodynamical consistency, we write
 an entropy equality
  on the contact surface, accounting for the interaction between the body and the adhesive substance, i.e.
\begin{equation}\label{Iprinins}
\bar\teta\partial_t s_{int}=-Q\teta-Q_s\teta_s,
\end{equation}
where $s_{int}$ is the entropy exchange,
defined in terms of the variable $\bar\teta=\frac 12 (\teta+\teta_s)$
(for any further details see \cite{fre}).
Now,  the Clausius-Duhem inequality on the contact surface (combining the adhesive substance and its interaction with the body)
reads
\begin{equation}\label{CDcomb}
 \partial_t s_s+\hbox{div }{\bf Q}_s+\partial_t s_{int}\geq {\bf Q}\cdot{\bf n}.
\end{equation}
Thus, exploiting
\eqref{Iprincol}, \eqref{ipotesiimp},   \eqref{Iprincont},  and \eqref{Iprinins},  inequality \eqref{CDcomb} follows once it is ensured that
(recall that the absolute temperatures are strictly positive)
$$
F(\theta-\theta_s)+B^d\partial_t\chi
+{\bf H}^d\cdot\nabla\partial_t\chi-{\bf Q}_s\cdot\nabla\theta_s\geq 0.
$$
Observe that this corresponds to prescribing in \eqref{pseudobordo} that
$$
\alpha_s\geq0,\quad \fc\geq 0, \quad k\geq 0,\quad \fc'(\teta-\teta_s)(\teta-\teta_s)\geq 0.
$$
 Indeed,  all of the above conditions are ensured by \eqref{heat-fluxes} and by the forthcoming Hypotheses (I) and (V) on $\fc$ and $k$. 
\begin{remark}\upshape
 Note that, in the case no heat is exchanged between the body and the adhesive substance,  the Clausius-Duhem inequality on the contact surface
 is granted upon  requiring that
 $$
 B^d\partial_t\chi
+{\bf H}^d\cdot\nabla\partial_t\chi-{\bf Q}_s\cdot\nabla\theta_s\geq 0.
$$
This follows from \eqref{Iprincol} taking $Q_s=0$.
\end{remark}

\section{Main results}
\label{s:setup}
\subsection{Setup}
\label{ss:2.1} \noindent Throughout the paper we
shall
 assume that
 \begin{equation}
 \label{assumpt-domain}
 \begin{gathered}
 \text{
 $\Omega$ is a
bounded   Lipschitz  domain in $\R^3$, with }
\\
\partial\Omega= \overline{\Gamma}_1 \cup \overline{\Gamma}_2 \cup \overline{\Gamma}_c, \ \text{ $\Gamma_i$, $i=1,2,c$,
open disjoint subsets in the relative topology of $\partial\Omega$, such that  }
\\
\mathcal{H}^{2}(\Gamma_1), \,  \mathcal{H}^{2}(\Gamma_c)>0, \qquad
\text{and ${\Gamma_c} \subset \R^2$ a  sufficiently smooth  \emph{flat} domain.}
\end{gathered}
\end{equation}
 More precisely,
by \emph{flat} we mean that $\Gamma_c$ is a subset of a hyperplane of $\R^2$ and $\mathcal{H}^2(\Gamma_c)=\mathcal{L}^2(\Gamma_c)$,
$\mathcal{L}^d$ and $\mathcal{H}^d$ denoting the $d$-dimensional Lebesgue and  Hausdorff measures.
As for smoothness,
we require that $\Gamma_c$ has a $\mathrm{C}^2$-boundary: thanks to, e.g., \cite[Thm.\ 8.12]{gilbarg-trudinger},
 this will justify
the elliptic regularity estimates we will perform on the solution component $\chi$.
\begin{notation}
\label{notation-1} \upshape
 Given a Banach space $X$, we denote by
 $\pairing{}{X}{\cdot}{\cdot}$ the duality pairing
between its dual space  $X'$ and $X$ itself and by $\Vert\cdot\Vert_X$
 the norm in $X$. In particular, we shall
use the following   short-hand notation for  function spaces
\[
\begin{gathered}
H:= L^2(\Omega), \quad V:=H^1(\Omega),\quad
 \bsH:= L^2 (\Omega;\R^3), \quad \bsV :=H^1 (\Omega;\R^3),
 \\  \Hc: = L^2
({\Gamma_c}), \quad
\Vc: = H^1 ({\Gamma_c}), \quad \Yc:= H^{1/2}_{00,\Gamma_1}({\Gamma_c}),
 \\
   \bsW:=\{{\bf v}\in \bsV\, : \ {\bf v}={\bf 0}\hbox{ a.e. on
}\Gamma_1\},\quad
\bsH_{{\Gamma_c}} := L^2 ({\Gamma_c};\R^3), \quad \bsY_{{\Gamma_c}}
:= H^{1/2}_{00,\Gamma_1}(\Gamma_c;\R^3),  
\\
\end{gathered}
\]
where we recall that
\[
 H^{1/2}_{00,\Gamma_1}(\Gamma_c)=
\Big\{ w\in H^{1/2}(\Gamma_c)\, :  \ \exists\,
\tilde{w}\in H^{1/2}(\Gamma) \text{ with }
\tilde{w}=w  \text{ in $\Gamma_c$,} \
\tilde{w}=0 \text{ in $\Gamma_1$} \Big\}
\]
 and $H^{1/2}_{00,\Gamma_1}(\Gamma_c;\R^3)$ is analogously defined. We will also use the space
 $H^{1/2}_{00,\Gamma_1}(\Gamma_2;\R^3).$
The space $\bsW$  is endowed with the natural norm induced by
$\bsV$. We will make use of the operator
\begin{equation}
A:\Vc \to \Vc' \qquad \pairing{}{\Vc}{A\chi}{w}:= \int_\gc \nabla
\chi \,\nabla w \dd x \  \text{ for all }\chi, \, w \in \Vc
\end{equation}
and of the notation 
\begin{equation}
\label{mean-value}
m(w):= \frac1{\mathcal{L}^d(A)} \int_A w \dd x \quad \text{for } w \in L^1(A).
\end{equation}
\end{notation}
\paragraph{Useful inequalities.}
 We  are going to exploit the following
trace results and continuous embeddings
\begin{equation}
\label{cont-embe}
 V \subset L^4(\gc), \qquad  \bsW \subset
L^{4}(\gc;\R^3), \qquad   \Vc \subset L^{q}(\gc) \text{ for all $1\leq
q<\infty$,} 
\end{equation}
and  the fact that, by Poincar\'e's inequality, for every Lipschitz domain $A \subset \R^d$
\begin{equation}
\label{equiv-norm} \exists\, C>0 \ \forall  v \in  W^{1,2}(A) \, :
\qquad \| v\|_{ W^{1,2}(A)} \leq C(\| v\|_{L^1(A)}+\| \nabla
v\|_{L^2(A)}),
\end{equation}
where $\| v\|_{L^1(A)}$ can be replaced by $|m(v)|$.
\paragraph{Linear viscoelasticity.}
We recall the definition of  the standard bilinear forms of linear
viscoelasticity, which are involved in the variational formulation
of equation~\eqref{eqI}. Dealing with an anisotropic and
inhomogeneous material,
 we assume that the fourth-order tensors $K_e=(a_{ijkh})$ and  $K_v=(b_{ijkh})$,
denoting the elasticity and the viscosity tensor, respectively,
satisfy the classical symmetry and ellipticity conditions
$$
\begin{aligned}
& a_{ijkh}=a_{jikh}=a_{khij}\,,\quad  \quad
b_{ijkh}=b_{jikh}=b_{khij} \quad \text{for }   i,j,k,h=1,2,3
\\
&  \exists \, \alpha_0>0 \,:  \qquad a_{ijkh} \xi_{ij}\xi_{kh}\geq
\alpha_0\xi_{ij}\xi_{ij} \quad   \forall\, \xi_{ij}\colon \xi_{ij}=
\xi_{ji}\quad  \text{for } i,j=1,2,3\,,
\\
& \exists \, \beta_0>0 \,:  \qquad b_{ijkh} \xi_{ij}\xi_{kh}\geq
\beta_0\xi_{ij}\xi_{ij} \quad   \forall\, \xi_{ij}\colon \xi_{ij}=
\xi_{ji}\quad  \text{for }  i,j=1,2,3\,,
\end{aligned}
$$
where the usual summation convention is used. Moreover, we require
$$
a_{ijkh}, b_{ijkh} \in L^{\infty}(\Omega)\,, \quad  i,j,k,h=1,2,3.
$$
By the previous assumptions on the elasticity and viscosity
coefficients, the following bilinear forms $a, b : \bsW \times \bsW
\to \R$,   defined~by
$$
\begin{aligned}
a({\bf u},{\bf v}):=\int_{\Omega} a_{ijkh} \varepsilon_{kh}({\bf u})
\varepsilon_{ij}({\bf v}) \dd x \quad  \text{for all } \uu, \vv \in \bsW,
\\
b({\bf u},{\bf v}):=\int_{\Omega} b_{ijkh} \varepsilon_{kh}({\bf u})
\varepsilon_{ij}({\bf v})  \dd x \quad  \text{for all }  \uu, \vv \in \bsW,
\end{aligned}
$$
turn out to be continuous and symmetric. In particular, we have
\begin{equation}
\label{continuity} \exists \, \bar{C} >0: \ |a(\uu, \vv)| +   |b(\uu,
\vv)| \leq  \bar{C}\| \uu\|_{\bsW} \| \vv\|_{\bsW} \quad \text{for all }
\uu, \vv \in \bsW.
\end{equation}
Moreover, since $\Gamma_1$ has positive measure,
 by Korn's inequality we deduce that $a(\cdot,\cdot)$ and
$b(\cdot,\cdot)$ are $\bsW$-elliptic, i.e., there exist $C_{a},
C_{b}>0 $ such that
\begin{align}
& a({\bf u},{\bf u})\geq C_a\Vert{\bf u}\Vert^2_{\bsW} \qquad
\text{for all }\uu\in \bsW, \,\label{korn_a}
\\
& b({\bf u},{\bf u})\geq C_b\Vert{\bf u}\Vert^2_{\bsW} \qquad
\text{for all }\uu\in \bsW. \label{korn_b}
\end{align}

\subsection{Assumptions}
\label{ss:2.2}
\noindent We specify all of the assumptions on the nonlinearities in
system (\ref{e1}--\ref{bord-chi}). \smallskip

\noindent \textbf{Hypothesis (I).} As for the \emph{friction
coefficient} $\fc: \R \to  (0,+\infty)$, we require that
\begin{equation}
\label{hyp-fc}
\begin{gathered}
 \fc \in \mathrm{C}^1(\R), \qquad \exists\, c_1,\, c_2
>0
 \ \forall\, x \in \R\, : \ \fc(x) \geq c_1, \ \
 |\fc'(x)| \leq c_2, \qquad
 \fc'(x) x \geq 0.
\end{gathered}
\end{equation}
For instance,
 the function $\fc(x) = \int_0^x \arctan(t)\dd t + c_1 =
x\arctan(x)-\frac12 \ln(1+x^2) +  c_1 $ complies with~\eqref{hyp-fc}.

\smallskip

 \noindent
 \textbf{Hypothesis (II).} We generalize the operator $\partial
I_{(-\infty,0]}$ 
by replacing $I_{(-\infty,0]}$  in \eqref{freeG}  with a function
\begin{equation}
\label{hyp:phi}
\begin{gathered}
 \phi : \R  \rightarrow [0, + \infty] \, \text{ proper, convex and  lower semicontinuous, with $\phi(0)=0$}
\end{gathered}
\end{equation}
and
effective domain $\mathrm{dom}(\phi)$;  let us emphasize that the \emph{physical case}, in which the constraint
$\uun \leq 0$ on $\gc$ is enforced, occurs when $\mathrm{dom}(\phi) \subset (-\infty,0]$  (and it is included in our analysis).
 Then, we define
\begin{equation}
\label{def-phi-scalare} \varphi:  \Yc \to [0,+\infty] \ \text{ by }
\ \varphi(v): = \begin{cases} \int_{\Gamma_c} \phi(v) \dd x & \text{if
$\phi(v) \in L^1 ({\Gamma_c})$,}
\\
+\infty
 & \text{otherwise.}
 \end{cases}
\end{equation}
Hence, we introduce
\begin{equation}
\label{funct-phi} \bvarphi: \bsY_{{\Gamma_c}}  \to [0,+\infty], \
\text{ defined by } \ \bvarphi(\uu):= \varphi(\uun) \quad \text{for
all $\uu  \in \bsY_{{\Gamma_c}}.$}
\end{equation}
Since $\bvarphi: \bsY_{{\Gamma_c}} \to [0,+\infty]$ is a proper,
convex and lower semicontinuous functional on $\bsY_{{\Gamma_c}}$,
its subdifferential $\partial \bvarphi: \bsY_{{\Gamma_c}}
\rightrightarrows \bsY_{{\Gamma_c}}'$ is a maximal monotone
operator.
\smallskip

\noindent \textbf{Hypothesis (III).} Concerning the  regularizing
operator $\Reg$, in the lines of \cite{bbr5} we require that there exists $\nu>0$ such that
\begin{align}
 \label{hyp-r-1}
&
\begin{aligned}
 & \Reg: L^2 (0,T;\bsY_{{\Gamma_c}}') \to L^\infty
(0,T;L^{2+\nu}(\gc;\R^3))  \text{  is weakly-strongly continuous, viz.}
\\
  &
  \eeta_n \weakto \eeta  \ \text{ in
$L^2 (0,T;\bsY_{{\Gamma_c}}')$} \ \  \Rightarrow \ \
 \Reg(\eeta_n) \to
\Reg(\eeta) \ \text{ in $L^\infty (0,T;L^{2+\nu}(\gc;\R^3)$}
\end{aligned}
\end{align}
for all  $(\eeta_n),\,\eeta \in L^2
(0,T;\bsY_{{\Gamma_c}}')$.  It is not difficult to check that
\eqref{hyp-r-1} implies that
$ \Reg: L^2 (0,T;\bsY_{{\Gamma_c}}') \to L^\infty
(0,T;L^{2+\nu}(\gc;\R^3)) $   is bounded.
In Example \ref{ex-reg} at the end of this section  we are going to exhibit an operator $\Reg$ complying with
\eqref{hyp-r-1}.
\smallskip

\noindent
 \textbf{Hypothesis (IV).} We generalize $\partial I_{[0,1]}$ in
\eqref{eqII} by considering  the  multivalued operator $\partial\widehat{\beta} : [0,+\infty)
\rightrightarrows  \R$, with
\begin{equation}
\label{A6} \tag{\textsc{H4}}
\begin{gathered}
 \widehat{\beta}: \R \rightarrow  (-\infty,+\infty] \, \text{ proper, convex and  lower semicontinuous,}
 \\
 \text{
  such that }
  \ \mathrm{dom}(\widehat \beta) \subset [ 0,+\infty). 
\end{gathered}
\end{equation}
In what follows, we use the notation $\beta:= \partial\widehat{\beta}$.
\smallskip

\noindent \textbf{Hypothesis (V).} We assume that the nonlinearities
$k$ in \eqref{condteta}--\eqref{eqtetas}, $\lambda$ in
\eqref{eqtetas} and \eqref{eqII}, and $\sigma$ in \eqref{eqII}
fulfill
\begin{align}
&  \label{hyp-k}    k \, : \R \to [0,+\infty)\ \  \text{is Lipschitz
continuous,}  
\\
&  \label{hyp-lambda} \lambda \in \mathrm{C}^{1} (\R), \ \text{with
} \lambda': \R \to \R \text{ Lipschitz continuous,}
\\
 &
 \label{hyp-sig} \sigma \in \mathrm{C}^{1} (\R), \ \text{with
} \sigma': \R \to \R \text{ Lipschitz continuous.}
\end{align}
\paragraph{Assumptions on the problem  and on the initial data.}
We  suppose that
\begin{align}
\label{hypo-h} &  h \in L^2 (0,T;V')\cap L^1 (0,T;H) \,,
\\
&  \label{hypo-f} \mathbf{f} \in L^2 (0,T;\mathbf{W}')\,,
\\
&  \label{hypo-g}
 \mathbf{g} \in L^2 (0,T;
H_{00,\Gamma_1}^{1/2}(\Gamma_2;\R^3)').
\end{align}
 It follows
from~\eqref{hypo-f}--\eqref{hypo-g} that,  the function $\mathbf{F}:(0,T)
\to \bfw'$  defined by
$$
 \pairing{}{\bsW}{\mathbf{F}(t)}{\vv}:=\pairing{}{\bsW}{\mathbf{f}(t)}{\vv}
 +\pairing{}{H_{00,\Gamma_1}^{1/2}(\Gamma_2;\R^3)}{\mathbf{g}(t)}{\vv}
\quad \text{for all } {\vv} \in  \bsW  \text{ and almost all } t \in
(0,T),
$$
 satisfies
\begin{equation}
\label{effegrande} \mathbf{F} \in L^2(0,T;\bfw') \,.
\end{equation}
Finally, we require that the initial data fulfill
\begin{align}
& \label{cond-teta-zero} \vartheta_0 \in L^{1}(\Omega)  \
 \ \ \text{and} \ \ \
\ln(\vartheta_0) \in H\,,
\\
& \label{cond-teta-esse-zero} \vartheta_s^0  \in L^{1}(\Gamma_c)
 \
 \ \ \text{and} \ \ \ \ln(\vartheta_s^0) \in \Hc\,,
\\
& \label{cond-uu-zero} {\bf u}_0 \in \bsW \ \text{and} \ \uu_{0} \in
\dom (\bvarphi)\,,
\\
& \label{cond-chi-zero} \chi_0 \in \Vc,  \ \
\widehat{\beta}(\chi_0)\in
 L^1(\Gamma_c)\,.
\end{align}
We conclude with the example, partially
mutuated from \cite[Ex.\ 2.4]{bbr5}, of an operator $\Reg$ complying with Hypothesis (III).
\begin{example}
 \label{ex-reg}
\upshape
Fix $\ell : \Gamma_c \times \Gamma_c \to \R^3$ such that $\ell \in L^{2+\nu} (\Gamma_c; \bsY_{{\Gamma_c}}) $
 for some $\nu>0$,
and for all $\eeta \in L^2
(0,T;\bsY_{{\Gamma_c}}')$  set
\[
\Reg(\eeta) (x,t) := \left( \int_0^t
\langle {\eeta(\cdot,s)},{\ell(x,\cdot)} \rangle_{\bsY_{{\Gamma_c}}} \dd
s\right) \mathbf{w} \quad \foraa\, (x,t) \in {\Gamma_c} \times
(0,T),
\]
where $\mathbf{w}$ is a fixed vector, e.g. with unit norm, in
$\R^3$.
 Then, for almost all $(x,t) \in \Gamma_c \times (0,T)$ there
holds
\begin{equation}
\label{interesting-bis}
|\Reg(\eeta) (x,t) | \leq \int_0^t |\langle{\eeta(\cdot,s)},{\ell(x,\cdot)}\rangle_{\bsY_{{\Gamma_c}}}|  \dd s
\leq t^{1/2} \| \ell(x,\cdot)\|_{\bsY_{{\Gamma_c}}} \| \eeta \|_{L^2 (0,t;\bsY_{{\Gamma_c}}') }.
\end{equation}
Integrating \eqref{interesting-bis} over $\gc$,
we easily conclude that  for all $t \in (0,T)$
\[
\|\Reg(\eeta) (\cdot,t) \|_{L^{2+\nu}(\gc;\R^3)} \leq  T^{1/2} \| \eeta \|_{L^2 (0,T;\bsY_{{\Gamma_c}}') } \| \ell \|_{L^{2+\nu} (\Gamma_c; \bsY_{{\Gamma_c}})},
\]
hence  $\Reg: L^2 (0,T;\bsY_{{\Gamma_c}}') \to L^\infty
(0,T;L^{2+\nu}(\gc;\R^3))$ is a linear and bounded operator.
Furthermore, it
fulfills
\eqref{hyp-r-1}. Indeed, let $\eeta_n \weakto \eeta $  in
$ L^2 (0,T;\bsY_{{\Gamma_c}}')$: for almost all $(x,t) \in \Gamma_c \times (0,T)$ we
have
\[
\begin{aligned}
\Reg(\eeta_n)(x,t)   =\int_0^T \langle {\eeta_n
(\cdot,s)},{\mathbf{1}_{(0,t)} \ell(x,\cdot)} \rangle_{\bsY_{{\Gamma_c}}} \, \mathbf{w} \dd s   \to
\int_0^T \langle{\eeta
(\cdot,s)},{\mathbf{1}_{(0,t)} \ell(x,\cdot)}\rangle_{\bsY_{{\Gamma_c}}} \, \mathbf{w}\dd s =
\Reg(\eeta)(x,t)
\end{aligned}
\]
as $n \to \infty$. Then,
 estimate
\eqref{interesting-bis} and the dominated convergence theorem
 yield $\Reg(\eeta_n) \to
\Reg(\eeta)$  in $L^q (0,T; L^{2+\nu}(\gc;\R^3))$ for all $1\leq q <\infty$. In order to conclude that
 $\Reg(\eeta_n) \to
\Reg(\eeta)$  in $L^\infty(0,T; L^{2+\nu}(\gc;\R^3))$, it is sufficient to observe that the sequence  $(\Reg(\eeta_n) )_n$ is in fact
compact in $\mathrm{C}^0 ([0,T]; L^{2+\nu}(\gc;\R^3))$.  This follows from the Ascoli-Arzel\`a theorem, since,
arguing as for \eqref{interesting-bis}, it is not difficult to see that,
  if $(\eeta_n)_n \subset L^2 (0,T;\bsY_{{\Gamma_c}}')$ is bounded, then
 $(\Reg(\eeta_n) )_n$ fulfills the equicontinuity condition
for all $0 \leq s \leq t \leq T$
\[
\|\Reg(\eeta_n) (\cdot,t) - \Reg(\eeta_n) (\cdot,s) \|_{L^{2+\nu}(\gc;\R^3)} \leq  (t-s)^{1/2} \| \eeta_n \|_{L^2 (s,t;\bsY_{{\Gamma_c}}') } \| \ell\|_{L^{2+\nu} (\Gamma_c; \bsY_{{\Gamma_c}})} \leq C (t-s)^{1/2}.
\]
\end{example}

\subsection{Statement of the main result}
\label{ss:2.3}
 \noindent
We now specify the variational formulation of system
(\ref{e1}--\ref{bord-chi}).
 \begin{problem}
 \label{prob:rev}
 \upshape
 Given a quadruple of initial
data
 $(\vartheta_0, \vartheta_s^0 , \uu_0, \chi_0)$
 fulfilling~\eqref{cond-teta-zero}--\eqref{cond-chi-zero}, find a
 seventuple
$(\vartheta,  \vartheta_s,   \uu,\chi,\eeta,\mmu, \xi)$, with
\begin{align}
&
\label{reg-teta}
 \vartheta \in L^2 (0,T; V) \cap L^\infty (0,
T;L^1 (\Omega))
\,,
\\
\label{reg-log-teta}
 & \ln(\vartheta) \in L^\infty (0,T;H) \cap H^1
(0,T;V')\,,
 \\
&
\label{reg-teta-s}
\vartheta_s \in  L^2 (0,T; \Vc) \cap L^\infty (0, T;L^1
(\Gamma_c))\,, \\ &
\label{reg-log-teta-s}
\ln(\vartheta_s) \in L^\infty (0,T;\Hc) \cap H^1
(0,T;\Vc')\,,
\\
& \uu \in H^1(0,T;\bsW) \,,\label{reguI}\\
&\chi \in L^{2}(0,T;H^2 (\Gamma_c))  \cap L^{\infty}(0,T;\Vc) \cap
H^1 (0,T;\Hc)\,, \label{regchiI}
\\
 &
\eeta\in L^2(0,T; \bsY'_{{\Gamma_c}})\,, \label{etareg}
\\ & \mmu \in L^2(0,T;\bsH_{{\Gamma_c}})\,,\label{mureg}
\\
& \xi\in L^2(0,T; \Hc)\,, \label{xireg}
\end{align}
 satisfying  the initial
 conditions
\begin{align}
& \label{iniw} \vartheta(0)=\vartheta_0 \quad \aein \ \Omega\,,
\\
& \label{iniz} \vartheta_s(0)=\vartheta_s^0 \quad \aein \
\Gamma_c\,,
\\
& \label{iniu} \uu(0)=\uu_0 \quad {\aein \ \Omega}\,,
\\
& \label{inichi} \chi(0)=\chi_0 \quad {\aein \ \Gamma_c}\,,
\end{align}
  and
 \begin{align}
&
 \label{teta-weak}
\begin{aligned}
\pairing{}{V}{\partial_t \ln(\vartheta)}{v}
 &  -\int_{\Omega} \dive(\partial_t\mathbf{u}) \, v \dd x
+\int_{\Omega}   \nabla \vartheta \, \nabla v  \dd x +
\int_{\Gamma_c} k(\chi)  (\vartheta-\vartheta_s) v  \dd x
\\ & +\int_{\Gamma_c} \fc'(\teta-\teta_s) |\Reg (\eeta)| |\dotut| v  \dd x
 = \pairing{}{V}{h}{v} \quad
\forall\, v \in V \ \hbox{ a.e. in }\, (0,T)\,,
\end{aligned}
\\
& \label{teta-s-weak}
\begin{aligned}
 & \pairing{}{\Vc}{\partial_t \ln(\vartheta_s)}{v}
-\int_{\Gamma_c}
\partial_t \lambda(\chi) \, v  \dd x    +\int_{\Gamma_c} \nabla \vartheta_s  \, \nabla
v \dd x
\\ &
\qquad  = \int_{\Gamma_c} k(\chi) (\vartheta-\vartheta_s) v  \dd x
 +\int_{\Gamma_c} \fc'(\teta-\teta_s) |\Reg (\eeta)| |\dotut| v  \dd x
  \quad
\forall\, v \in \Vc  \ \hbox{ a.e. in }\, (0,T)\,,
\end{aligned}
\\
 &
\begin{aligned}
 &  b(\dotu,\vv)  +a(\uu,\vv)+ \int_{\Omega} \vartheta \dive (\vv)  \dd x
 +\int_{\Gamma_c} \chi \uu \cdot \vv \dd x  \\
&
\qquad  + \langle {\mbox{\boldmath$\eta$}} , \vv
\rangle_{
\bsY_{{\Gamma_c}}}
   +
\int_{{{\Gamma_c}}}\fc(\teta-\vartheta_s)   {\mmu}\cdot {\vv} \dd x =
\pairing{}{\bsW}{\mathbf{F}}{\vv}
 \quad \text{for all } \vv\in \bsW \ \hbox{ a.e. in }\, (0,T)\,,
 \end{aligned}
\label{eqIa}
\\
&\eeta\in
\partial \bvarphi(\uu)  \text{ in $\bsY'_{{\Gamma_c}}$, }
\hbox { a.e. in }\,
(0,T), \label{incl1}
\\
& \mmu =   |\Reg(\eeta)|    \zz  \ \text{ with } \  \zz \in{\bf d}(\dotu)
\hbox { a.e. in }\,
{\Gamma_c}\times (0,T),  \label{incl1-bis}
\\
&\partial_t{\chi}+A\chi+\xi+\sigma'(\chi)= -\lambda'(\chi)
\vartheta_s- \frac12 |\uu|^2 \quad\hbox{a.e. in } {\Gamma_c}\times
(0,T)\,,\label{eqIIa}\\
&\xi\in \beta(\chi)\hbox { a.e. in }\, {\Gamma_c} \times (0,T).
\label{inclvincolo}
\end{align}
\end{problem}
\noindent    Note that, to simplify notation we have
incorporated the contribution $-\lambda'(\chi)\teta_{\mathrm{eq}}$
occurring in \eqref{eqII} into the term $\sigma'(\chi)$ in \eqref{eqIIa}.
\begin{remark}
\upshape
\label{rmk:meaning}
Observe that the subdifferentials  in \eqref{incl1-bis} and \eqref{inclvincolo} are
multivalued operators  on $\R^3$ with values in $\R^3$, and on $\R$ with values in $\R$, respectively.
 Hence the related subdifferential inclusions for
$\zz$ and $\xi$ hold a.e.\ in $\gc \times (0,T)$. Instead,
\eqref{incl1} features the (abstract) operator $\partial  \bvarphi :\bsY_{{\Gamma_c}}
\rightrightarrows \bsY'_{{\Gamma_c}}$, thus \eqref{incl1} holds in $\bsY'_{{\Gamma_c}}$.
Note that, in the case we further  assume (for physical consistency) that dom$\phi\subset(-\infty,0]$,
 it still  follows
from the definition \eqref{funct-phi} of $\bvarphi$ that, if  $\uu \in \dom(\partial  \bvarphi)$,
then $\uu$ complies with the constraint $\uun \leq 0$ a.e.\ in $\gc \times (0,T)$.
\end{remark}
 We are now in the position to state our existence theorem for
Problem \ref{prob:rev}. Observe that we obtain enhanced regularity in time for
$\teta$ and $\teta_s$ thanks to some refined $\BV$-estimates (cf.\  Remark \ref{rmk:toBV}
later on).  Relying on \eqref{hyp-r-1} in Hypothesis (III), we also find \eqref{mureg-enhanced} for $\mmu$.
\begin{maintheorem}
\label{mainth:1}
$ $\\
 In the framework of \eqref{assumpt-domain},
 under Hypotheses (I)--(V) and conditions
\eqref{hypo-h}--\eqref{hypo-g} on the data $h$, $\mathbf{f}$,  $\mathbf{g}$, and
\eqref{cond-teta-zero}--\eqref{cond-chi-zero} on $\teta_0,$
$\teta_s^0,$ $\uu_0$, and $\chi_0$, Problem
\ref{prob:rev} admits at least a solution $(\vartheta,  \vartheta_s,
\uu,\chi,\eeta,\mmu, \xi)$, which in addition satisfies
\begin{align}
&
\label{bv-reg-teta} \teta \in \mathrm{BV}(0,T;W^{1,q}(\Omega)') \text{
for every $q>3$,}
\\
&
\label{bv-reg-teta-s}
 \teta_s \in
\mathrm{BV}(0,T;W^{1,\sigma}(\gc)')  \text{ for every $\sigma>2$},
\\
&
\label{mureg-enhanced}
\mmu \in L^\infty(0,T; L^{2+\nu}(\Gamma_c;\R^3)) \qquad \text{with $\nu>0$ from \eqref{hyp-r-1}.}
\end{align}
\end{maintheorem}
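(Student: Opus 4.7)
My strategy would be to follow a vanishing-regularization program: introduce an approximate problem depending on a small parameter $\eps>0$, obtain local-in-time existence for it via Schauder's fixed-point theorem, extend the approximate solution to all of $[0,T]$ by means of uniform a priori bounds, and finally pass to the limit $\eps\downarrow 0$ using compactness together with the theory of maximal monotone operators. The uniform estimates thus play a dual role, both globalizing the approximate solutions and providing the convergences needed at the limit.

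For the approximation, I would regularize the three sources of non-smoothness separately. First, the singular logarithms $\ln(\teta)$ and $\ln(\teta_s)$ would be replaced by a Lipschitz truncation $\applog$ bounded below, removing both the blow-up at $0$ and the lack of time regularity; second, the maximal monotone operators $\partial\bvarphi$ and $\beta$ would be replaced by their Yosida approximations, so as to work with Lipschitz single-valued maps; third, the multivalued tangent-direction map $\mathbf{d}$ of \eqref{formu-d} would be smoothed, for instance by $\vv\mapsto \vv_{\mathrm{T}}/(|\vv_{\mathrm{T}}|^2+\eps^2)^{1/2}$. The nonlocal operator $\Reg$ would be left untouched, because its weak-strong continuity \eqref{hyp-r-1} is exactly the property needed at the limit. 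Freezing the nonlinear coefficients along a trial quadruple $(\tilde\teta,\tilde\teta_s,\tilde\uu,\tilde\chi)$ decouples the system into standard parabolic and linear-viscoelastic pieces; solving each of them defines a compact self-map on a bounded set of a suitable Bochner space, and Schauder's theorem produces a local $\eps$-solution.

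The decisive step is the derivation of $\eps$-uniform a priori bounds. Testing the momentum equation by $\dotu$, summing with the $\chi$-equation tested by $\part\chi$, and exploiting the Korn-type ellipticity \eqref{korn_a}--\eqref{korn_b} and the positivity of $\fc$ from \eqref{hyp-fc}, one obtains a mechanical estimate yielding $\uu\in H^1(0,T;\bsW)$ and $\chi\in H^1(0,T;\Hc)\cap L^\infty(0,T;\Vc)$; testing the $\chi$-equation also by $A\chi$, together with elliptic regularity on the flat surface $\gc$, upgrades this to $\chi\in L^2(0,T;H^2(\gc))$. For the thermal variables, testing the temperature equations against the constant $1$ gives the $L^\infty(0,T;L^1)$-bound, while testing against $\teta^{\alpha-1}$ for a small $\alpha>0$ (performed on the regularized logarithm) produces the $L^2(0,T;H^1)$-control. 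Returning then to the temperature equations and using $L^1$-integrability of their right-hand sides in duality with $W^{1,q}$ respectively $W^{1,\sigma}$ test functions, one would derive $\part\teta\in L^1(0,T;W^{1,q}(\Omega)')$ for $q>3$ and $\part\teta_s\in L^1(0,T;W^{1,\sigma}(\gc)')$ for $\sigma>2$, which yields the $\BV$-in-time bounds claimed in \eqref{bv-reg-teta}--\eqref{bv-reg-teta-s}.

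At the limit stage, a $\BV$-version of the Aubin-Lions-Simon compactness lemma provides strong convergence $\teta_\eps\to\teta$ in $L^2(\Omega\times(0,T))$ and of $\teta_s^\eps\to\teta_s$ in $L^2(\gc\times(0,T))$, together with pointwise a.e.\ convergence along subsequences; combined with weak convergence of $\uu_\eps$, $\dotue$, and $\chi_\eps$, this handles all polynomial and single-valued nonlinearities and allows one to identify $\eeta\in\partial\bvarphi(\uu)$ and $\xi\in\beta(\chi)$ by the classical weak-strong limsup argument. The main obstacle, as the authors themselves forecast, is the passage to the limit in the product of subdifferentials entering \eqref{incl1-bis} and, correspondingly, in the friction heat source $\fc'(\teta-\teta_s)|\Reg(\eeta)||\dotut|$ that couples \eqref{teta-weak} to \eqref{teta-s-weak}. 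The key tool here is the weak-strong continuity \eqref{hyp-r-1}: from $\eeta_\eps\weakto\eeta$ in $L^2(0,T;\bsY_{{\Gamma_c}}')$ one deduces that $\Reg(\eeta_\eps)\to\Reg(\eeta)$ strongly in $L^\infty(0,T;L^{2+\nu}(\gc;\R^3))$, which both yields the extra regularity \eqref{mureg-enhanced} and enables the passage to the limit in $|\Reg(\eeta_\eps)|\,\zz_\eps$ and in $\fc'(\teta_\eps-\teta_s^\eps)|\Reg(\eeta_\eps)||\dotute|$. The inclusion $\zz\in\mathbf{d}(\dotu)$ at the limit is then read off by a standard monotonicity argument from the subdifferential structure of the convex function $j$ defined in \eqref{def-j}.
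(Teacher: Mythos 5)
Your overall architecture (regularize the singular terms, Schauder fixed point for local existence, $\eps$-uniform estimates to globalize, limit passage via maximal monotonicity and the weak--strong continuity \eqref{hyp-r-1} of $\Reg$) coincides with the paper's, and your identification of the friction heat source as the critical coupling term is on target. (The paper, incidentally, keeps $\beta$ and $\mathbf{d}$ multivalued at the approximate level and only Yosida-regularizes $\ln$ and $\phi$; your additional smoothing of $\mathbf{d}$ and $\beta$ is a legitimate variant.) However, two of the steps you describe would not deliver the bounds you claim, and the second is exactly the mechanism behind the conclusions \eqref{bv-reg-teta}--\eqref{bv-reg-teta-s}.

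First, the thermal estimates. The time derivative in \eqref{e1} acts on $\ln(\teta)$ (resp.\ $\applog(\teta)$), not on $\teta$. Testing by the constant $1$ therefore controls $\int_\Omega\applog(\teta(t))$, which does \emph{not} bound $\|\teta(t)\|_{L^1(\Omega)}$ from above, since $\ln_\eps(x)\ll x$; and testing by $\teta^{\alpha-1}$ with small $\alpha$ yields at best $\nabla\teta^{\alpha/2}\in L^2$, a Boccardo--Gallou\"et-type bound, not $\teta\in L^2(0,T;V)$. The correct choice is to test by $\teta$ itself: then $\int_0^t\pairing{}{V}{\partial_t\applog(\teta)}{\teta}\dd s=\int_\Omega\mathcal{I}_\eps(\teta(t))\dd x-\int_\Omega\mathcal{I}_\eps(\teta_0^\eps)\dd x$ with $\mathcal{I}_\eps(x)=\int_0^x s\applog'(s)\dd s$, and the inequality $\mathcal{I}_\eps(x)\ge\tfrac\eps2 x^2+C_1|x|-C_2$ gives the $L^\infty(0,T;L^1)$ bound simultaneously with $\int_0^T\|\nabla\teta\|_H^2\dd t$ from the elliptic term.

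Second, and more seriously, the $\BV$ bounds. A comparison in the temperature equation controls $\partial_t\applog(\teta)$ (in the limit, $\partial_t\ln(\teta)$) in a dual space; it says nothing directly about $\partial_t\teta$, so "duality with $W^{1,q}$ test functions" applied to the equation as written does not yield $\partial_t\teta\in L^1(0,T;W^{1,q}(\Omega)')$. The missing device is the test function $v=w/\applog'(\teta)$ with $w\in W^{1,q}(\Omega)$, $q>3$, which turns $\int_\Omega\applog'(\teta)\partial_t\teta\,v\dd x$ into $\int_\Omega\partial_t\teta\,w\dd x$; one must then check that $1/\applog'(\teta)$ lies in $V$ and is controlled linearly by $\teta$ (this uses that $x\mapsto1/\applog'(x)$ is a contraction, via the resolvent formula for $\ln_\eps$), and bound every resulting term by an $L^1(0,T)$ function times $\|w\|_{W^{1,q}(\Omega)}$. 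Without this step the claimed regularity \eqref{bv-reg-teta}--\eqref{bv-reg-teta-s} is not established. A related, smaller omission: passing to the limit in $\fc'(\teta_\eps-\teta_{s,\eps})|\Reg(\eeta_\eps)|\,|\dotute|$ requires \emph{strong} convergence $\partial_t\uu_\eps\to\partial_t\uu$ in $L^2(0,T;\bsW)$, since the modulus is not weakly continuous; this is obtained from a $\limsup$ of $\int_0^T b(\partial_t\uu_\eps,\partial_t\uu_\eps)\dd t$ via the momentum equation and the $\bsW$-ellipticity of $b$, not from the weak convergence of $\dotue$ that you invoke.
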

The \emph{proof} will be developed  throughout Sections \ref{s:3}--\ref{s:3.3}. First, we will
analyze a suitable approximation  of Problem \ref{prob:rev},
 for which we will obtain the existence of solutions in
 Thm.\ \ref{th:exist-approx-rev}.
In Section \ref{s:3.3} we will then pass to the limit and show
that (up to a subsequence) the approximate solutions converge to a
solution of Problem  \ref{prob:rev}.

\begin{notation}
\upshape In what follows, we will  denote most of the positive constants
occurring in the calculations  by the symbols $c,\, c', \, C,\,C'$,
whose meaning may vary even within the same line.
Furthermore, the symbols $I_i,$ $i=0,1,\ldots,$
 will be used as place-holders for  several integral terms popping in the various
 estimates: we warn the reader that we will not be self-consistent with the numbering, so that, for instance,
 the symbol
$I_1$ will appear several times with different meanings.
\end{notation}

\section{Approximation}
\label{s:3}
Here we focus on the approximation of the PDE system  (\ref{teta-weak}--\ref{inclvincolo})
through
Yosida-type regularization of some of the (maximal monotone) nonlinearities therein involved.
 For the related  definitions and results we refer
 to the classical monographs \cite{barbu76, brezis73}.

In Sec.\ \ref{s:3.1} we justify
the regularizations we will perform, giving raise to the approximate Problem $(P_\eps)$. The existence of
solutions is proved in two steps: first,
 in Sec.\ \ref{s:3.2} we
show that Problem $(P_\eps)$ admits local-in-time solutions
by means of a Schauder fixed point argument. We then extend them to
solutions on the whole $[0,T]$ relying on the  \emph{global} a priori estimates which
we derive in Sec.\ \ref{ss:3.3}. In fact, we will obtain
 estimates \emph{independent} of the approximation parameter $\eps$:
they will be the starting point for the passage to the limit
as $\eps \down 0 $ developed in Section \ref{s:3.3}.
\subsection{The approximate problem}
\label{s:3.1}
In order to motivate the way we are going to
approximate Problem \ref{prob:rev}, let us discuss in advance some of the
\emph{global} a priori estimates
to be performed on system (\ref{teta-weak}--\ref{inclvincolo}).
Clearly,  these estimates
correspond to (some of)
the summability and regularity properties \eqref{reg-teta}--\eqref{xireg} required for solutions
to Problem \ref{prob:rev}. As we will see, the  related  calculations can be developed on
system (\ref{teta-weak}--\ref{inclvincolo}) only on a \emph{formal} level: they  can be made rigorous
by means of the Yosida-type regularizations we will consider (cf.\ the global a priori estimates on
the approximate solutions performed in Section \ref{ss:3.3}).
\paragraph{Heuristics for the approximate problem.} The
basic \emph{energy estimate} for system (\ref{teta-weak}--\ref{inclvincolo})
consists  in
testing \eqref{teta-weak} by
$\vartheta$, \eqref{teta-s-weak} by $\vartheta_s$, \eqref{eqIa} by
$\partial_t\uu$,  \eqref{eqIIa} by $\partial_t\chi$, adding the resulting
relations, and integrating in time. Taking into account the \emph{formal} identities
\begin{equation}
\label{e:formal1}
\begin{aligned}
& \int_0^t \pairing{}{V}{\partial_t \ln(\vartheta)}{ \vartheta } \dd
r = \| \vartheta(t) \|_{L^1 (\Omega)} - \| \vartheta_0 \|_{L^1
(\Omega)}\,,
\\
& \int_0^t  \pairing{}{\Vc}{\partial_t \ln(\vartheta_s)}{
\vartheta_s } \dd r= \| \vartheta_s(t) \|_{L^1 (\Gamma_c)} - \|
\vartheta_s^0 \|_{L^1 (\Gamma_c)}\,, \end{aligned}
\end{equation}
this estimate leads, among others, to a bound for $\teta$ in $L^\infty (0,T;L^1(\Omega))$
and, correspondingly,
 for $\teta_s$ in $L^\infty (0,T;L^1(\Gamma_c))$ (cf.\ \eqref{reg-teta} and \eqref{reg-teta-s}).
As a first step towards making \eqref{e:formal1} rigorous, following
\cite{bcfg1, bbr4}
we will
\begin{enumerate}
\item replace the logarithm $\ln$ in equations \eqref{teta-weak} and \eqref{teta-s-weak} by its approximation $\applog: \R \to \R$
\begin{equation}
\label{def-applog}
\applog(r):=\varepsilon r+\ln_\varepsilon (r),
\end{equation}
where for $\eps>0$
$\ln_\varepsilon$ denotes the Yosida regularization of the logarithm $\ln$.
 Therefore,
 $\applog$  is differentiable, strictly increasing  and Lipschitz continuous, see also Lemma \ref{l:new-lemma2} below.
\end{enumerate}
As pointed out in Remark \ref{rem:formal} below, this
procedure
 is not sufficient to  justify
\eqref{e:formal1} completely. In order to do so,
following \cite{bbr3, bbr4}
we should also add a viscosity term
both in \eqref{teta-weak} and \eqref{teta-s-weak}, modulated by a second parameter $\nu$. 
We choose to overlook this point
for the sake of simplicity. Anyhow, let us stress that
approximating the logarithm $\ln$ by $\applog$ makes   the
 test of  \eqref{teta-weak} by $\ln(\teta)$ rigorous
(and, respectively, it justifies  the test  of \eqref{teta-s-weak} by $\ln(\teta_s)$).
 This gives raise to an estimate for $\ln(\teta)$
in $L^\infty (0,T;H)$ (for $\ln(\teta_s)$ in $L^\infty (0,T;H_{\Gamma_c})$, respectively).

A consequence of the aforementioned \emph{energy estimate} and of a comparison argument
in  the momentum equation \eqref{eqIa} is the following estimate
\begin{equation}
\label{comparison-1} \| \fc(\teta-\teta_s)\mmu + \eeta \|_{L^2
(0,T;\bsY_{{\Gamma_c}}')} \leq C.
\end{equation}
Using the \emph{formal} observation that $\mmu$ and $\eeta$ are
\emph{orthogonal},  from
\eqref{comparison-1} one concludes the crucial information
\begin{equation}
\label{second-aprio}
 \| \fc(\teta-\teta_s)\mmu\|_{L^2
(0,T;\bsY_{{\Gamma_c}}')} + \| \eeta \|_{L^2
(0,T;\bsY_{{\Gamma_c}}')} \leq C. \end{equation}
In order to justify this argument,
 we need to
 suitably approximate the maximal monotone operator
  $\partial\bvarphi: \bsY_{{\Gamma_c}} \rightrightarrows
  \bsY_{{\Gamma_c}}'$
  in such a way as to replace $\eeta \in
  \bsY_{{\Gamma_c}}'$ in \eqref{eqIa} with a term $\eeta_\eps$
  having \emph{null tangential component}, cf.\
  \eqref{null-component} below.
 Following \cite{bbr5}, we will thus
\begin{enumerate}
 \setcounter{enumi}{1}
\item   replace the function $\phi$, which enters in
the definition of the functional $\bvarphi$ through
\eqref{def-phi-scalare} and \eqref{funct-phi}, by its Yosida
approximation $\phi_\eps: \R \to [0,+\infty)$.
 We recall that $\phi_\eps $ is convex, differentiable, and  such that $\phi_\eps'$  is
 the Yosida regularization of the subdifferential $\partial\phi: \R \rightrightarrows \R$.
\end{enumerate}
 Therefore, in this way  we will  replace the constraint
\eqref{incl1} by its regularized version
\begin{equation}
\label{null-component} \eeta_\eps : =\phi_\eps'(\uun){\bf n}
\quad\hbox{a.e. in }\Gamma_c\times(0,T).
\end{equation}

\noindent
Furthermore, we introduce the function $\mathcal{I}_\eps: \R \to \R$ defined by
 \begin{equation}
\label{mathcal-i-eps}
\mathcal{I}_\eps(x):= \int_0^x s \applog'(s) \dd s
\end{equation}
which we will use
in the proof of
the existence of solutions to Problem $(P_\eps)$,
and in particular in the derivation of suitable a priori estimates.
In the two following lemmas, we collect some useful  properties
 of the functions $\ln_\eps$,  $\applog$, and  $\mathcal{I}_\eps$.
\begin{lemma}
\label{l:new-lemma1} The following inequalities hold:
\begin{subequations}
\label{ine-lprimo}
\begin{align}
& \label{ine-lprimo-1} \exists\, \eps_*>0\,:  \ \ \forall\, \eps \in
(0,\eps_*) \ \ \forall\, x>0 \quad \ln_{\eps}'(x) \leq
\frac{2}{x}\,,
\\
& \label{ine-lprimo-2}\forall\, \eps >0 \ \ \forall\, x \in \R \quad
\ln_{\eps}'(x) \geq \frac{1}{|x| + 2 + \eps}\,.
\end{align}
\end{subequations}
As a consequence, the function
$\mathcal{I}_\eps: \R \to \R$  \eqref{mathcal-i-eps}
 satisfies
\begin{subequations}
\label{ine-imu}
\begin{align}
& \label{ine-imu-1} \exists\, \eps_*>0\,:  \ \ \forall\, \eps \in
(0,\eps_*) \ \ \forall\, x \geq 0  \quad  \mathcal{I}_\eps(x) \leq  \frac{\eps} {2} x^2+2x\,;
\\
& \label{ine-imu-2} \exists\, C_1, \, C_2 >0 \, : \ \  \forall\, \eps
>0 \ \ \forall\, x \in \R \quad  \mathcal{I}_\eps(x) \geq \frac{\eps} {2} x^2+ C_1 |x|
 - C_2\,.
 \end{align}
\end{subequations}
\end{lemma}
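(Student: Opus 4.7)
The key observation is the identity $\ln_\eps(x) = \ln J_\eps(x)$, where $J_\eps(x) > 0$ is the resolvent of $\ln$, uniquely characterized by $J_\eps(x) + \eps \ln J_\eps(x) = x$. Differentiating this relation gives
\[
\ln_\eps'(x) = \frac{1}{J_\eps(x) + \eps},
\]
so each of the inequalities in \eqref{ine-lprimo} is equivalent to a one-sided estimate on $J_\eps(x)$.

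For \eqref{ine-lprimo-1}, the plan is to apply the elementary inequality $\ln y \leq y - 1$ ($y > 0$) to the defining identity, obtaining $x \leq (1 + \eps) J_\eps(x) - \eps$, whence $J_\eps(x) \geq (x+\eps)/(1+\eps)$. Choosing $\eps_* := 1$, for $\eps \in (0, \eps_*)$ and $x > 0$ this yields $J_\eps(x) + \eps \geq x/(1+\eps) \geq x/2$, hence $\ln_\eps'(x) \leq 2/x$. For \eqref{ine-lprimo-2}, I would show $J_\eps(x) \leq |x| + 2$ via a two-case analysis: if $J_\eps(x) \leq 1$, the bound is trivial; if $J_\eps(x) > 1$, then $\ln J_\eps(x) > 0$, so the defining identity forces $x > J_\eps(x) > 1$, whence $J_\eps(x) < x = |x| \leq |x| + 2$. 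Inserting this into the formula for $\ln_\eps'$ yields $\ln_\eps'(x) \geq 1/(|x|+2+\eps)$ for every $\eps > 0$ and $x \in \R$.

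For the bounds on $\mathcal{I}_\eps$, splitting $\applog' = \eps + \ln_\eps'$ gives the decomposition
\[
\mathcal{I}_\eps(x) = \frac{\eps x^2}{2} + \int_0^x s \ln_\eps'(s) \dd s.
\]
Estimate \eqref{ine-imu-1} is then immediate: by \eqref{ine-lprimo-1} we have $s \ln_\eps'(s) \leq 2$ on $(0,x]$, so the integral is bounded by $2x$. For the lower bound \eqref{ine-imu-2}, I would use \eqref{ine-lprimo-2} (after changing variable in the case $x<0$) to deduce
\[
\int_0^x s \ln_\eps'(s) \dd s \geq \phi_\eps(|x|), \qquad \phi_\eps(t) := t - (2+\eps) \ln\!\left(1 + \frac{t}{2+\eps}\right),
\]
and, since $\phi_\eps$ is convex with $\phi_\eps'(2+\eps) = 1/2$, conclude by the tangent-line inequality at the minimizer $t_0 = 2+\eps$ of $\phi_\eps(t) - t/2$ that $\phi_\eps(t) \geq t/2 - (2+\eps)(\ln 2 - 1/2)$.

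The main subtlety lies in the $\eps$-dependence of the constants arising from the last tangent-line bound: the residual $(2+\eps)(\ln 2 - 1/2)$ is uniformly bounded only for $\eps$ in a bounded range, so one cannot literally choose universal $C_1, C_2$ for arbitrary $\eps>0$. This is resolved by confining $\eps$ to the same range $(0,\eps_*)$ as in \eqref{ine-imu-1}, which is the only regime used later, or by absorbing the linear-in-$\eps$ remainder into $\frac{\eps x^2}{2}$ via Young's inequality; either option is harmless for the subsequent analysis, since the approximate problem is solved only for small $\eps$ before the limit $\eps \downarrow 0$.
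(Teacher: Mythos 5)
Your proposal is correct, and it supplies a genuine self-contained argument where the paper gives none: the paper's proof of this lemma consists entirely of citing \cite[Lemma~4.1]{bbr4} and \cite[Lemma~4.2]{bcfg1} for \eqref{ine-lprimo}, and of asserting that \eqref{ine-imu} follows ``analogously''. Your starting point $\ln_\eps'(x)=1/(J_\eps(x)+\eps)$ is exactly the paper's own formula \eqref{very-useful-formula} (with $J_\eps=\rho_\eps$), and the two resolvent estimates you derive from the defining identity --- $J_\eps(x)\geq (x+\eps)/(1+\eps)$ via $\ln y\leq y-1$, and $J_\eps(x)\leq |x|+2$ via the two-case analysis --- are clean and correct, as are the resulting upper and lower bounds on $\mathcal{I}_\eps$ obtained by splitting off $\frac{\eps}{2}x^2$ and integrating the bounds on $s\ln_\eps'(s)$.

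On the one subtlety you flag: you are right that the tangent-line bound $\phi_\eps(t)\geq t/2-(2+\eps)(\ln 2-\tfrac12)$ yields a constant growing linearly in $\eps$, and in fact the literal statement of \eqref{ine-imu-2} ``for all $\eps>0$'' with $\eps$-independent $C_1,C_2$ cannot hold by any argument: since $\ln_\eps'\leq 1/\eps$, one has $\mathcal{I}_\eps(x)-\frac{\eps}{2}x^2=\int_0^x s\,\ln_\eps'(s)\dd s\leq x^2/(2\eps)\to 0$ as $\eps\to\infty$ for fixed $x>0$, which is incompatible with a lower bound $C_1|x|-C_2$ at any $x>C_2/C_1$. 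Your first fix --- confining $\eps$ to $(0,\eps_*)$, consistent with the paper's convention that $\eps$ varies in $(0,1)$ --- is the right one and covers every use of the estimate. Your alternative fix via Young's inequality does not actually work, however: the offending remainder $\eps(\ln 2-\tfrac12)$ is independent of $x$ and cannot be absorbed into $\frac{\eps}{2}x^2$ near $x=0$. This is a cosmetic imprecision in the lemma's statement rather than a gap in your argument.
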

\begin{proof} Estimates \eqref{ine-lprimo-1} and \eqref{ine-lprimo-2} have been derived in
\cite[Lemma~4.1]{bbr4} (cf.\ also \cite[Lemma 4.2]{bcfg1}).
Inequalities  \eqref{ine-imu-1} and \eqref{ine-imu-2} can be deduced from
\eqref{ine-lprimo-1}--\eqref{ine-lprimo-2}
by arguing
in a completely analogous way as in the proof of \cite[Lemma~4.1]{bbr4}, to which the reader is referred.
\end{proof}
\begin{lemma}
\label{l:new-lemma2}  The function $\applog:\R \to \R$
 satisfies:
\begin{align}
& \label{bi-Lip} 
\eps <\applog'(x) \leq \eps+\frac{2}{\eps}
 \quad \text{for all } x \in \R\,,
\\
& \label{Lip}
\left|\frac{1}{\applog'(x)}-\frac{1}{\applog'(y)}\right|\leq  |x-y| \quad \text{for all } x,y \in \R \,.
\end{align}
\end{lemma}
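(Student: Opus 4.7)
The plan is to exploit the explicit representation of the Yosida regularization of $\ln$, viewed as a maximal monotone graph on $\R$ with domain $(0,\infty)$. For every $\eps>0$ and every $x\in\R$, the resolvent $J_\eps x$ is the unique positive solution of $y+\eps\ln y=x$, and then $\ln_\eps(x)=(x-J_\eps x)/\eps=\ln(J_\eps x)$. Since $\ln$ is smooth on $(0,\infty)$, the implicit function theorem ensures that $J_\eps$ and $\ln_\eps$ are of class $\mathrm{C}^\infty$ on $\R$, and implicit differentiation of the defining identity yields the two formulae
\[
J_\eps'(x)=\frac{J_\eps x}{J_\eps x+\eps}\in(0,1),\qquad \ln_\eps'(x)=\frac{1-J_\eps'(x)}{\eps}=\frac{1}{J_\eps x+\eps}\in\bigl(0,\tfrac{1}{\eps}\bigr],
\]
that I will use throughout.

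The first assertion \eqref{bi-Lip} is then immediate from $\applog'(x)=\eps+\ln_\eps'(x)$: the strict lower bound $\applog'(x)>\eps$ comes from $\ln_\eps'(x)>0$, and the upper bound $\applog'(x)\le\eps+1/\eps\le\eps+2/\eps$ is trivial, since $\eps\le 2/\eps$ is not needed — one simply estimates $\ln_\eps'(x)\le 1/\eps\le 2/\eps$.

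For \eqref{Lip}, I would reduce the claim to showing that $g(x):=1/\applog'(x)$ is $1$-Lipschitz on $\R$; since $g$ is smooth by the first step, the mean value theorem reduces this in turn to the pointwise bound $|g'(x)|\le 1$. Writing $u(x):=J_\eps x+\eps>\eps$, a short manipulation gives $g(x)=u(x)/\bigl(\eps\, u(x)+1\bigr)$, and the chain rule then yields
\[
g'(x)=\frac{1}{(\eps u(x)+1)^2}\cdot\frac{u(x)-\eps}{u(x)}.
\]
Both factors lie in $(0,1)$, since $u(x)>\eps>0$ and $\eps u(x)+1>1$, hence $|g'(x)|<1$, and the conclusion follows. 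There is no real obstacle here: the content of the lemma is a one-variable calculus exercise, and the only point is to choose a convenient parametrisation (through $u$) so that the computation for \eqref{Lip} does not become unwieldy.
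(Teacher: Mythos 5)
Your proof is correct and follows essentially the same route as the paper: both rest on the resolvent identity $\ln_\eps'(x)=1/(\rho_\eps(x)+\eps)$ and on rewriting $1/\applog'(x)$ as $(\rho_\eps(x)+\eps)/\bigl(\eps(\rho_\eps(x)+\eps)+1\bigr)$, which is exactly your $u/(\eps u+1)$. The only cosmetic differences are that you derive the identity via the implicit function theorem and bound the pointwise derivative $|g'|\le 1$ instead of the difference quotient (the paper instead invokes the contraction property of $\rho_\eps$ directly), and your upper bound $\ln_\eps'\le 1/\eps$ for \eqref{bi-Lip} is slightly sharper and more direct than the paper's Lipschitz-constant argument.
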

\begin{proof}
The left-hand side  inequality in \eqref{bi-Lip}
directly follows from the definition \eqref{def-applog} of $\applog$ and from estimate \eqref{ine-lprimo-2}
for $\ln_{\eps}'$. In order to prove the right-hand side estimate, it is sufficient to show that
\begin{equation}
\label{applog-lip}
 |\applog(x)-\applog(y)| \leq  \left( \eps+\frac{2}{\eps}\right)|x-y|
\ \ \text{for all } \, x,y \in \R\,.
\end{equation}
This can be checked by recalling that
the Yosida-regularization $\ln_\eps$ is defined by
\begin{equation}
\label{def-ln-eps}
\ln_\eps(x):=\frac 1 \eps(x-\rho_\eps(x))
\end{equation}
where $\rho_\eps=(\mathrm{Id}{+}\eps \ln)^{-1}:\R \to  (0,+\infty)$ is the $\eps$-resolvent of $\ln$.
Plugging \eqref{def-ln-eps} into the definition \eqref{def-applog} of $\applog$ and using the well-known fact that
$\rho_\eps$ is a contraction, we immediately deduce \eqref{applog-lip}.

Observe that, by  the first of \eqref{bi-Lip} the function $x\mapsto \frac1{\applog'(x)}$ is well-defined
on $\R$.
In order to show  that it is itself a  contraction, we use the  formula
\begin{equation}
\label{very-useful-formula}
\ln_\eps'(x)= \frac1{\rho_\eps(x)+\eps} \qquad \text{for all } x \in \R
\end{equation}
(cf.\ \cite[Lemma 4.2]{bcfg1}).
Therefore,
for every $x,\,y \in \R$
\[
\begin{aligned}
\left|\frac1{\applog'(x)}-\frac1{\applog'(y)}\right|  & = \left|\frac{\rho_\eps(x)+\eps}{\eps^2+\eps\rho_\eps(x)+1}-
\frac{\rho_\eps(y)+\eps}{\eps^2+\eps\rho_\eps(y)+1}\right|
\\
&
=
\frac{|\rho_\eps(x){-}\rho_\eps(y)|}{(\eps^2+\eps\rho_\eps(x)+1)( \eps^2+\eps\rho_\eps(y)+1)}
\leq |\rho_\eps(x){-}\rho_\eps(y)|
\end{aligned}
\]
and \eqref{Lip} ensues, taking into account that $\rho_\eps$ is a contraction.
\end{proof}


\medskip
Furthermore,
we will supplement Problem $(P_\eps)$ by the approximate  initial data $(\vartheta_0^\eps,\vartheta_s^{0,\eps},
\uu_0,\chi_0)$, where the family
 $(\vartheta_0^\eps, \vartheta_s^{0,\eps})_\eps$ fulfills
 \begin{subequations}
 \label{dati-teta-better}
\begin{equation}
\label{add-appr-data} (\vartheta_0^\eps)_\eps \subset H, \qquad
(\vartheta_s^{0,\eps})_\eps \subset H_{\Gamma_c},
\end{equation}
and it approximates the data $ (\teta_0,\teta_s^0)$  from \eqref{cond-teta-zero}--\eqref{cond-teta-esse-zero}
in the sense that
\begin{align}
&\label{convdatiteta}
(\vartheta_0^\eps, \vartheta_s^{0,\eps}) \to (\teta_0,\teta_s^0) \text{ in } L^1(\Omega)\times L^1(\Gamma_c)
\text{ as $\eps\down0$\,,}
\\
&\label{bounddatiteta-bis}
 \|\ln_\eps(\vartheta_0^\eps)\|_H\leq\|\ln(\teta_0)\|_H\,,\qquad \|\ln_\eps(\vartheta_s^{0,\eps})\|_{\Hc}\leq \|\ln(\teta_s^0)\|_{\Hc} \quad \text{for all} \, \eps>0\,,
 \\
&
\label{bounddatiteta-ter}
\exists\, \bar{S}>0\,: \ \ \forall\, \eps>0 \quad
\eps (\|\vartheta_0^\eps\|_H+ \|\vartheta_s^{0,\eps}\|_{\Hc})\leq \bar{S}\,,
\\
&
\label{bounddatiteta-1}
\exists\, \bar{S_1}>0\,: \ \ \forall\, \eps>0 \quad
\int_\Omega \mathcal{I}_\eps(\teta_0^\eps(x))\dd x \leq \bar{S_1}(1+\|\teta_0\|_{L^1(\Omega)})\,,
\\
&
\label{bounddatiteta-2}
\exists\, \bar{S_2}>0\,: \ \ \forall\, \eps>0 \quad
\quad \int_\gc \mathcal{I}_\eps(\teta_s^{0,\eps})\dd x \leq \bar{S_2}(1+\|\teta_s^0\|_{L^1(\gc)}).
\end{align}
\end{subequations}
Indeed,  \eqref{add-appr-data} reflects
the enhanced regularity
\eqref{reg-teta-app} and \eqref{reg-teta-s-app} required of solutions $(\teta,\teta_s)$ to Problem $(P_\eps)$.
In what follows, we give an example of construction of a sequence $(\vartheta_0^\eps, \vartheta_s^{0,\eps})_\eps$
fulfilling properties \eqref{dati-teta-better}.
\begin{example}
 \label{ex-dati}
\upshape
We will carry out the construction of the
sequence $(\vartheta_0^\eps)_\eps$ only, the argument for $(\vartheta_s^{0,\eps}
)_\eps$ being completely analogous. For all $\eps>0$  and a.e. in $\Omega$, let us define
\begin{equation}
\label{costr-appr-data} \vartheta_0^\eps:=\min\{\vartheta_0, \eps^{-\alpha}\}\qquad
\quad \text{for some } \alpha>0\,.
\end{equation}
Observe that $\vartheta_0^\eps>0$ a.e. in $\Omega$ (being $\teta_0 >0$
a.e. in $\Omega$ thanks to the second of~\eqref{cond-teta-zero}), $\vartheta_0^\eps\in L^\infty(\Omega)$,  and moreover
\begin{equation}
\label{appr-data_1} \|\vartheta_0^\eps\|_{L^1(\Omega)}\leq\|\vartheta_0\|_{L^1(\Omega)}\,.
\end{equation}
Hence,
\eqref{convdatiteta} is an immediate consequence of the dominated convergence theorem.
Next, noting that
$\|\vartheta_0^\eps\|^2_{H}\leq |\Omega|\, \eps^{-2\alpha}$ and choosing
$\alpha=\frac12$, we can deduce
\begin{equation}
\label{bounddatiteta}
\exists\, \bar{S}>0\,: \ \ \forall\, \eps>0 \quad \varepsilon^{1/2}\|\teta_0^\eps\|_H\leq \bar{S}\
\end{equation}
and also \eqref{bounddatiteta-ter} follows.
Moreover, \eqref{ine-imu-1},   \eqref{appr-data_1}, and  \eqref{bounddatiteta}
give \eqref{bounddatiteta-1}.
Finally, relying on the definition \eqref{costr-appr-data} and on  well-known properties of the Yosida regularization $\ln_\eps$, we find
\begin{equation}
\label{appr-data_2} |\ln_\eps(\vartheta_0^\eps)|\leq |\ln_\eps(\vartheta_0)|\leq |\ln(\vartheta_0)|
 \quad \text{a.e. in } \Omega \,
\end{equation}
whence \eqref{bounddatiteta-bis}.
\end{example}
All in all, the approximation of Problem \ref{prob:rev}
reads:
\begin{problem}[$P_\eps$]
 \label{prob:rev-app}
 \upshape
Let us a consider a quadruple of initial data
 $(\vartheta_0^\eps, \vartheta_s^{0,\eps} , \uu_0, \chi_0)$
  satisfying \eqref{cond-uu-zero}--\eqref{cond-chi-zero} and \eqref{dati-teta-better}. Find
 a
 sextuple
$(\vartheta,  \vartheta_s,   \uu,\chi,\mmu, \xi)$, fulfilling
\begin{align}
&
\label{reg-teta-app}
\teta \in  L^2 (0,T;\V) \cap \mathrm{C}^0 ([0,T];H),
\\
&
\label{reg-log-teta-app}
\applog(\teta) \in L^2 (0,T;\V) \cap \mathrm{C}^0 ([0,T];H) \cap H^1 (0,T;\V'),
\\
&
\label{reg-teta-s-app}
 \teta_s \in  L^2 (0,T;\Vc) \cap  \mathrm{C}^0 ([0,T];\Hc),
\\
&
\label{reg-log-teta-s-app}
\applog(\teta_s) \in L^2 (0,T;\Vc) \cap \mathrm{C}^0 ([0,T];\Hc) \cap H^1 (0,T;\Vc'),
\end{align}
and such that $(\uu,\chi,\mmu,\xi)$ comply with
\eqref{reguI}--\eqref{regchiI},  \eqref{mureg-enhanced},  \eqref{xireg},
 satisfy  the initial
 conditions
 \begin{align}
& \label{iniw-better} \vartheta(0)=\vartheta_0^\eps \quad \aein \ \Omega\,,
\\
& \label{iniz-better} \vartheta_s(0)=\vartheta_s^{0,\eps} \quad \aein \
\Gamma_c\,,
\end{align}
 as well as  \eqref{iniu}--\eqref{inichi},
  and the equations
 \begin{align}
&
 \label{teta-weak-app}
\begin{aligned}
 &  \pairing{}{\V}{\partial_t\applog(\teta)}{ v}
   -\int_{\Omega} \dive(\partial_t\mathbf{u}) \, v  \dd x
+\int_{\Omega}   \nabla \vartheta \, \nabla v  \dd x + \int_{\Gamma_c}
k(\chi)  (\vartheta-\vartheta_s) v   \dd x
\\ & \qquad +\int_{\Gamma_c}
\fc'(\teta-\teta_s) |\Reg (\phi_\eps'(\uun){\bf n})| |\dotut| v  \dd x
 = \pairing{}{V}{h}{v} \quad
\forall\, v \in V \ \hbox{ a.e. in }\, (0,T)\,,
\end{aligned}
\\
& \label{teta-s-weak-app}
\begin{aligned}
 &  \pairing{}{\Vc}{\partial_t\applog(\teta_s)}{ v}  -\int_{\Gamma_c}
\partial_t \lambda(\chi) \, v   \dd x     +\int_{\Gamma_c} \nabla \vartheta_s  \, \nabla
v \dd x
\\ &
\  = \int_{\Gamma_c} k(\chi) (\vartheta-\vartheta_s) v\dd x
 +\int_{\Gamma_c} \fc'(\teta-\teta_s) |\Reg (\phi_\eps'(\uun){\bf n})| |\dotut| v \dd x
  \quad
\forall\, v \in \Vc  \ \hbox{ a.e. in }\, (0,T)\,,
\end{aligned}
\\
 &
 \label{eqIa-app}
\begin{aligned}
 &  b(\dotu,\vv)  +a(\uu,\vv)+ \int_{\Omega} \vartheta \dive (\vv)\dd x
 +\int_{\Gamma_c} \chi \uu \cdot \vv \dd x\\
& \quad + \int_\gc \phi_\eps'(\uun){\bf n}\cdot  \vv\dd x
   +
\int_{{{\Gamma_c}}}\fc(\teta-\vartheta_s)   {\mmu}\cdot {\vv}\dd x =
\pairing{}{\bsW}{\mathbf{F}}{\vv}
 \quad \text{for all } \vv\in \bsW \ \hbox{ a.e. in }\, (0,T)\,,
  \end{aligned}
 \\
 &
 \label{to-be-quoted-also}
 \mmu = |\Reg(\phi_\eps'(\uun){\bf n})|  \zz   \text{ with }  \zz\in {\bf d}(\dotu)\hbox { a.e. in }\,
{\Gamma_c}\times (0,T),
\end{align}
as well as relations \eqref{eqIIa}--\eqref{inclvincolo}.
\end{problem}
\noindent
Observe that, in \eqref{teta-weak-app}, \eqref{teta-s-weak-app}, and \eqref{to-be-quoted-also}
the term $\Reg(\phi_\eps'(\uun){\bf n})$ needs to be understood as
$\Reg(J(\phi_\eps'(\uun){\bf n}))$, where $J$ denotes the
embedding operator from   $L^2(0,T;L^4(\Gamma_c;\R^3))$ to $ L^2(0,T;\bsY_{{\Gamma_c}}')$.

In the next two sections, we address the existence of solutions to Problem $(P_\eps)$
for $\eps>0$ fixed. That is why, for notational convenience we will not specify the
dependence of such solutions on   the parameter  $\eps$.
\subsection{The approximate problem: local existence}
\label{s:3.2}
 The main result of this section is the forthcoming
Proposition
\ref{prop:loc-exist-eps}, stating the
 existence of a local-in-time solution to Problem $(P_\eps)$. We prove it
 by means of a Schauder fixed point argument, which relies on intermediate well-posedness results
for the single equations in Problem  $(P_\eps)$.
\paragraph{Fixed point setup.}
In view of Hypothesis (III), 
 we may choose $\delta \in (0,1)$
such that
\begin{equation}\label{hyp-r-nuova}
\Reg:L^2(0,T;\bsY'_{\Gamma_c})\rightarrow L^\infty(0,T;L^{\frac 2{1-\delta}}(\Gamma_c;\R^3))
\text{ is weakly-strongly continuous}
\end{equation}
(and therefore bounded).
For a fixed $\tau>0$ and a fixed constant $M>0$, we consider the set
\begin{equation}
\label{fixed-point-set}
\begin{aligned}
\! \!\!\!\! \!\!
 \!{\cal Y}_\tau=\{ & (\teta,
\teta_s,\uu,\chi)\in L^2(0,\tau;H^{1-\delta}(\Omega))\times L^2(0,\tau;H^{1-\delta}(\Gamma_c))
\times L^2(0,\tau;H^{1-\delta}(\Omega;\R^3))
\times X_\tau\,: \\
&  \|\teta\|_{L^2(0,\tau;H^{1-\delta}(\Omega))}+\|\teta_s\|_{L^2(0,\tau;H^{1-\delta}(\Gamma_c))}
+ \|\uu\|_{L^2(0,\tau;H^{1-\delta}(\Omega;\R^3))}+ \|\chi\|_{L^2(0,\tau;\Hc)}
\leq M,
\end{aligned}
\end{equation}
 with the topology of $L^2(0,\tau;H^{1-\delta}(\Omega))\times L^2(0,\tau;H^{1-\delta}(\Gamma_c))
\times L^2(0,\tau;H^{1-\delta}(\Omega;\R^3))
\times L^2(0,\tau;\Hc)$, where
$$
X_\tau:=\{\chi\in L^2(0,\tau;\Hc) \, :  \quad \chi\in \mathrm{dom}(\widehat{\beta})\hbox{ a.e.\ on }\Gamma_c
\times (0,\tau)\}.
$$
We are going to construct an operator $\mathcal{T}$ mapping
${\cal Y}_{\widehat{T}}$ into itself for a suitable time
$0\leq \widehat{T} \leq T$, in such a
way that any fixed point of $\mathcal{T}$ yields a solution to Problem $(P_\eps)$
on the interval $(0,\widehat{T})$. In the proof of Proposition \ref{prop:loc-exist-eps},
we will then proceed to show  that $\mathcal{T}:{\cal Y}_{\widehat{T}}\to
{\cal Y}_{\widehat{T}}$ does admit a fixed point.
\begin{notation}
\label{not-4.2}
\upshape
 In the following lines,
we will denote by $S_i$, $i=1,2,3$, a positive constant depending on the problem data,
on  $\bar{S_1}$ and $\bar{S_2}$  in \eqref{bounddatiteta-1}-\eqref{bounddatiteta-2}, on
$M>0$ in \eqref{fixed-point-set}, but \emph{independent} of $\eps>0$, and by
  $S_4(\eps)$ a constant depending on the above quantities and on $\eps>0$ as well.
Furthermore,  with the symbols $\pi_i (A)$, $\pi_{i,j}(A), \ldots,$ we will denote the projection of
a set $A$ on its $i$-, or $(i,j)$-component, $\ldots$.
\end{notation}

\paragraph{Step $1$:}
As a first step in the construction of $\mathcal{T}$, we fix
$(\widehat\teta,\widehat{\teta}_s,\widehat{\uu},\widehat\chi) \in \mathcal{Y}_\tau$ and prove a well-posedness result
for
 (the Cauchy problem for)  the PDE system
(\ref{eqIa-app}--\ref{to-be-quoted-also}),
with  $(\widehat\teta,\widehat\teta_s,\widehat{\chi})$ in place of
$(\teta,\teta_s,\chi)$, and
\[
\fc(\widehat{\teta}-\widehat{\vartheta}_s) |\Reg(\phi_\eps'(\whuun){\bf n})|  \ \text{ replacing } \  \fc(\teta-\vartheta_s) |\Reg(\phi_\eps'(\uun){\bf n})|.
\]
\begin{lemma}\label{lemmaS1}
 Assume \eqref{assumpt-domain}, Hypotheses (I)--(III), and suppose that
 $\mathbf{f}, \, \mathbf{g}, \, \uu_0$ comply with \eqref{hypo-f}--\eqref{hypo-g}, and
\eqref{cond-uu-zero}.

Then, there exists a constant $S_1>0$ such that
for all $(\widehat\teta,\widehat{\teta}_s,\widehat{\uu},\widehat\chi) \in \mathcal{Y}_\tau$
 there exists a
unique  pair $(\uu,\mmu) \in H^1(0,\tau;{\bf W}) \times  L^\infty
(0,\tau;L^{2+\nu}(\gc;\R^3))  $ fulfilling the initial condition \eqref{iniu} and
\begin{equation}
\label{eqS1}
\begin{aligned}
 &  b(\dotu,\vv)  +a(\uu,\vv)+ \int_{\Omega} \widehat\teta \dive (\vv)\dd x
 +\int_{\Gamma_c} \widehat\chi \uu \cdot \vv \dd x\\
& + \int_\gc \phi_\eps'(\uun){\bf n}\cdot  \vv\dd x
   +
\int_{{{\Gamma_c}}} \fc(\widehat{\teta}-\widehat{\vartheta}_s)  \mmu \cdot {\vv}\dd x =
\pairing{}{\bsW}{\mathbf{F}}{\vv}
 \quad \text{for all } \vv\in \bsW \ \hbox{ a.e. in }\, (0,T)\,,
 \\
 &  \mmu = |\Reg(\phi_\eps'(\whuun){\bf n})|
  {\zz}  \ \text{ with } \ \zz\in {\bf d}(\dotu) \hbox { a.e. in }\,
{\Gamma_c}\times (0,T)\,,
 \end{aligned}
 \end{equation}
and the estimate
\begin{equation}\label{boundS1}
\|\uu\|_{H^1(0,\tau;{\bf W})}+ \| \zz \|_{L^\infty (\gc \times (0,\tau))} + \| \mmu \|_{L^\infty
(0,\tau;L^{2+\nu}(\gc;\R^3)) } \leq S_1.
\end{equation}
\end{lemma}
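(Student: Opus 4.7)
The structure to exploit is that, with $\widehat\teta,\widehat\teta_s,\widehat\uu,\widehat\chi$ frozen, equation \eqref{eqS1} is a \emph{doubly nonlinear evolution inclusion} for $\uu$ of the form
\[
\partial_{\dotu} D(\dotu;t) + D_{\uu} E(\uu,t) \ni 0 \quad \text{in } \bsW',
\]
with strictly convex, coercive dissipation potential
$D(\ww;t):=\tfrac12 b(\ww,\ww) + \int_{\gc} F(x,t)\,|\ww_{\mathrm T}|\dd x$,
where $F(x,t):=\fc(\widehat\teta-\widehat\teta_s)\,|\Reg(\phi_\eps'(\whuun)\nn)|\geq 0$ belongs to $L^\infty(0,\tau;L^{2+\nu}(\gc))$ by Hypothesis (III), and with time-dependent energy
$E(\uu,t):=\tfrac12 a(\uu,\uu) + \tfrac12\int_{\gc}\widehat\chi|\uu|^2 + \int_{\gc}\phi_\eps(\uun) - \langle \mathbf F,\uu\rangle + \int_\Omega \widehat\teta\,\dive\uu$,
where $\widehat\chi\geq 0$ a.e.\ on $\gc\times(0,\tau)$ since $\widehat\chi\in X_\tau$. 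The inclusion $\zz\in\mathbf{d}(\dotu)$ is precisely the subdifferential of $\ww\mapsto|\ww_{\mathrm T}|$ in $F$.

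For existence, the plan is to smooth out the only source of multivaluedness, the function $j(\ww)=|\ww_{\mathrm T}|$, by a $\mathrm C^1$ convex approximation $j_\sigma$ (e.g.\ $j_\sigma(\ww)=\sqrt{|\ww_{\mathrm T}|^2+\sigma^2}-\sigma$), thus replacing $\mathbf d(\dotu)$ by the single-valued Lipschitz map $\nabla j_\sigma(\dotu)$. The resulting equation is a semilinear parabolic PDE of the form $b(\dotu,\vv)+a(\uu,\vv)+\mathcal N_\sigma(\uu,\dotu;t)=\ell(t;\vv)$ in $\bsW$ with Lipschitz nonlinearity in $(\uu,\dotu)$ (since $\phi_\eps'$ is $(1/\eps)$-Lipschitz), which can be solved by a standard Galerkin scheme on a basis of $\bsW$, existence at the ODE level following from the Cauchy--Lipschitz theorem. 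One then passes $\sigma\down0$ via monotonicity and lower semicontinuity arguments, identifying the weak limit of $\nabla j_\sigma(\dotu_\sigma)$ with an element of $\mathbf d(\dotu)$ in the standard way.

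The estimate \eqref{boundS1} will follow from testing \eqref{eqS1} by $\dotu$. Viscosity yields $C_b\|\dotu\|_{\bsW}^2$ by \eqref{korn_b}; $a(\uu,\dotu)$ gives the time derivative of $\tfrac12 a(\uu,\uu)$; the friction and $\phi_\eps$ terms are non-negative and discarded on the right; the two critical terms are
$\bigl|\int_\Omega\widehat\teta\dive\dotu\bigr|\leq \eta\|\dotu\|_{\bsW}^2 + C_\eta\|\widehat\teta\|_H^2$ with $\widehat\teta\in L^2(0,\tau;H)$, and $\bigl|\int_{\gc}\widehat\chi\uu\cdot\dotu\bigr|\leq C\|\widehat\chi\|_{\Hc}\|\uu\|_{\bsW}\|\dotu\|_{\bsW}\leq\eta\|\dotu\|_{\bsW}^2+C_\eta\|\widehat\chi\|_{\Hc}^2\|\uu\|_{\bsW}^2$ via the embedding $\bsW\hookrightarrow L^4(\gc;\R^3)$; here $\|\widehat\chi(\cdot)\|_{\Hc}^2\in L^1(0,\tau)$ with norm bounded by $M^2$, so Gronwall absorbs this factor. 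Using the initial bound $\int_{\gc}\phi_\eps(u_{0,\mathrm N})\leq\int_{\gc}\phi(u_{0,\mathrm N})<\infty$ (uniform in $\eps$ by \eqref{cond-uu-zero}), one deduces the $\eps$-independent bound $\|\uu\|_{H^1(0,\tau;\bsW)}\leq S_1$. The estimates on $\zz$ and $\mmu$ are automatic: $|\zz|\leq 1$ since $\zz\in\mathbf d(\dotu)\subset\overline B_1$, and $|\mmu|=|\Reg(\phi_\eps'(\whuun)\nn)||\zz|$ is bounded in $L^\infty(0,\tau;L^{2+\nu}(\gc;\R^3))$ by Hypothesis (III) applied to $\phi_\eps'(\whuun)\nn\in L^2(0,\tau;L^4(\gc;\R^3))\subset L^2(0,\tau;\bsY'_{\gc})$.

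Uniqueness is the cleanest part: subtracting the equations for two solutions $(\uu_1,\mmu_1),(\uu_2,\mmu_2)$ and testing with $\dotu_1-\dotu_2$, the viscous term produces $C_b\|\dotu_1-\dotu_2\|_{\bsW}^2$, the elastic term yields the time derivative of $\tfrac12 a(\uu_1-\uu_2,\uu_1-\uu_2)$, the $\phi_\eps'$ and friction terms are monotone hence non-negative (monotonicity of $\phi_\eps'$ and of $\mathbf d$ with the same non-negative weight $F$), and the $\widehat\chi$-term is absorbed by the viscous term plus Gronwall exactly as in the a~priori estimate, since $\widehat\chi\geq 0$. The main analytical obstacle will be the rigorous $\sigma\down0$ passage to the limit in the non-smooth friction contribution, in particular the identification $\zz\in\mathbf d(\dotu)$ from weak convergence of $\nabla j_\sigma(\dotu_\sigma)$, which requires combining the Minty-type trick with the $L^\infty(0,\tau;L^{2+\nu}(\gc))$ regularity of the weight $F$ to secure strong compactness of $\dotu_\sigma$ (or to work purely with monotonicity of the maximal monotone operator $\partial_{\ww}(Fj)$ in the duality between $L^2(0,\tau;\bsW)$ and its dual).
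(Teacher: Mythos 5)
Your a priori estimate and the treatment of $\zz$ and $\mmu$ match the paper's argument essentially step by step (test by $\dotu$, Korn, discard the signed friction and $\phi_\eps$ terms, Young plus Gronwall with $\|\widehat\chi\|_{\Hc}^2\in L^1(0,\tau)$, and $|\zz|\le 1$ together with Hypothesis (III) for $\mmu$). Your existence route is genuinely different: the paper does not regularize $j$ and run a Galerkin scheme, but instead observes that \eqref{eqS1} has the same structure as the momentum equation of the isothermal model in \cite{bbr5} and adapts the time-discretization argument from there, the only new point being that the weight $\fc(\widehat\teta-\widehat\teta_s)|\Reg(\phi_\eps'(\whuun)\nn)|$ lies in $L^2(0,\tau;L^{4/3}(\gc))$. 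Your smoothing-plus-Minty strategy is plausible, but you correctly flag that the identification $\zz\in\mathbf d(\dotu)$ as $\sigma\down 0$ is the delicate step; the paper sidesteps it by citation.

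There is, however, a genuine error in your uniqueness argument. You claim that the $\phi_\eps'$ term is ``monotone hence non-negative.'' It is not: after subtracting the two equations and testing with $\partial_t\tilde\uu=\dotui1-\dotui2$, that term reads
\begin{equation*}
\int_\gc \bigl(\phi_\eps'(\uuni1)-\phi_\eps'(\uuni2)\bigr)\,\partial_t\bigl(\uuni1-\uuni2\bigr)\dd x ,
\end{equation*}
i.e.\ the increment of $\phi_\eps'$ at the \emph{displacements} is paired with the difference of the \emph{velocities}, not of the displacements, so monotonicity of $\phi_\eps'$ gives no sign (contrast this with the friction term, where $\zz_i\in\mathbf d(\dotui i)$ is paired with $\dotui1-\dotui2$, so monotonicity does apply there). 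The correct treatment — and the one the paper uses — is to move this term to the right-hand side, estimate it by the Lipschitz continuity of $\phi_\eps'$ (with constant $1/\eps$, harmless since $\eps$ is fixed in this lemma) and the trace embedding $\bsW\subset L^2(\gc;\R^3)$, absorb a fraction of $\|\partial_t\tilde\uu\|_{\bsW}^2$ into the viscous term, and close with Gronwall applied to $a(\tilde\uu(t),\tilde\uu(t))$. As written, your uniqueness proof does not close at this point.
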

\begin{proof}
Observe  that \eqref{eqS1} has the very same structure of the momentum equation
in the isothermal model for adhesive contact with friction tackled in \cite{bbr5}, for which the existence
of solutions was proved by passing to the limit in a time-discretization scheme.
The arguments from \cite{bbr5} can be easily adapted to the present setting, also  taking into account
that the term
\begin{equation}
\label{new-operator}
\widehat{\mathcal{R}} :  = \fc(\widehat{\teta}-\widehat{\vartheta}_s) |\Reg(\phi_\eps'(\whuun){\bf n})| \text{ is in  }
 L^2(0,\tau; L^{4/3}(\Gamma_c)).
\end{equation}
Indeed,
\eqref{new-operator}
can be checked by observing that
$\fc(\widehat\teta-\widehat\teta_s) \in L^2(0,\tau;L^{4/({1+2\delta})}(\Gamma_c))$
by trace theorems and the Lipschitz continuity of $\fc$,
 and combining this with
\eqref{hyp-r-nuova}.  It follows from \eqref{new-operator}
that  the term $\int_{{{\Gamma_c}}} \widehat{\mathcal{R}} {\zz}\cdot {\vv} \dd x $ in
 the momentum equation
\eqref{eqS1}, with  a selection
 $\zz \in L^\infty (\gc \times (0,\tau)) $ from ${\bf d}(\dotu)$
  as in the
second of   \eqref{eqS1},
is well-defined for every  $\vv \in \mathbf{W}$.  Observe that the $L^\infty
(0,\tau;L^{2+\nu}(\gc;\R^3))$-regularity of $\mmu$ derives from \eqref{hyp-r-1}.

To prove uniqueness, we proceed as in
\cite[Sec.\ 5]{bbr5}: given two solution pairs   $(\uu_1,\mmu_1)$ and $(\uu_2,\mmu_2)$ to the Cauchy problem for \eqref{eqS1},
we test the equation
fulfilled by  $\tilde{\uu}:= \uu_1-\uu_2$
 and $\tilde{\mmu}: = \mmu_1 -\mmu_2$, $\mmu_i =  |\Reg(\phi_\eps'(\whuun){\bf n})|
  {\zz_i}  $ for some
  $ \zz_i\in {\bf d}( \partial_t \uu_i)$, $i=1,2$,
 with $\vv:= \partial_t \tilde{\uu}$ and integrate in time. With
    straightforward calculations, setting $\tilde{\zz}= \zz_1 -\zz_2$ and using the place-holder
    $\widehat{\mathcal{R}} $ from
     \eqref{new-operator},
 we obtain
\[
\begin{aligned}
&
\int_0^t  b(\partial_t \tilde{\uu},\partial_t \tilde{\uu}) \dd s
+\frac12 a(\tilde{\uu}(t), \tilde{\uu}(t) )+\int_0^t
\int_{{{\Gamma_c}}} \widehat{\mathcal{R}}\,  \tilde{\zz}\cdot {\partial_t \tilde{\uu}}\dd x\dd s
\\ &
\leq  \left|\int_0^t\int_{\Gamma_c} \widehat\chi \tilde{\uu} \cdot \partial_t \tilde{\uu} \dd x \dd s\right|
 + \left|\int_0^t\int_\gc (\phi_\eps'(\uuni 1){-}\phi_\eps'(\uuni 2))
 {\bf n}\cdot\partial_t \tilde{\uu}   \dd x\dd s \right|.
\end{aligned}
\]
Now, the third term on the left-hand side is positive by monotonicity of $\mathbf{d}$  and positivity of
$\fc$ (cf.\ \eqref{hyp-fc}), whereas the second integral
on the right-hand side can be estimated  relying on the Lipschitz continuity of $\phi_\eps'$.
All in all, the desired contraction estimate
for $\tilde\uu$
 ensues from
applying the Gronwall lemma, with calculations analogous to those in \cite[Sec.\ 5]{bbr5}. Clearly,
from this
 we also deduce that there
exists a unique $\mmu$ complying with \eqref{eqS1}.

Finally, to prove estimate \eqref{boundS1}
we test \eqref{eqS1} by  $\vv=\partial_t\mathbf{u}$ and integrate on
$(0,t)$ with $t  \in (0,\tau]$.
 We obtain
 the following estimate
 \begin{equation}
 \label{est-local-u}
\begin{aligned}
& C_b\int_0^t \|\partial_t\mathbf{u}\|^2_{\bsW} \dd s +\frac {C_a} 2\|\uu(t)\|^2_\bsW+\int_0^t\int_{{{\Gamma_c}}}\fc(\widehat\teta-\widehat
\teta_s)
{\mmu}\cdot {\partial_t\mathbf{u}}\dd x \dd s +\int_{\Gamma_c}\phi_\eps(\uun(t))\dd x\\
&\leq c\left(\| \uu_0 \|_{\bsW}^2+  \int_\gc \phi_\eps(\uun(0)) \dd x  +\int_0^t( \|\widehat
\teta\|_{H}+\|\bsF\|_{\bsW'})\|\partial_t\mathbf{u}\|_\bsW \dd s
+\int_0^t \|  \widehat\chi \|_{\Hc} \|\uu\|_\bsW\|\partial_t\mathbf{u}\|_{\bsW}\dd s\right)\\
&\leq c\left(1+M^2+\int_0^t \|  \widehat\chi \|_{\Hc}^2 \|\uu\|^2_\bsW  \dd s\right)+\frac {C_b} 2\|\partial_t\mathbf{u}\|^2_\bsW,
\end{aligned}
\end{equation}
 where for the left-hand side we have used the chain-rule identities
 \begin{equation}
 \label{chain-rules-local}
 \begin{aligned}
 &
 \int_0^t a(\uu(s),\partial_t \uu(s))\dd s =\frac12 a(\uu(t),\uu(t))-\frac12 a(\uu_0,\uu_0)
 \\
 &
 \int_0^t \int_\gc \phi_\eps'(\uun){\bf n}\cdot  \partial_t \uu \dd x \dd s=
  \int_\gc \phi_\eps(\uun(t)) \dd x - \int_\gc \phi_\eps(\uun(0)) \dd x,
  \end{aligned}
 \end{equation}
 (cf.\ \cite[Lemma~3.3]{brezis73},
\cite[Lemma~4.1]{colli92})
the coercivity properties \eqref{korn_a} and \eqref{korn_b} of the forms $a$ and $b$, and ultimately
 that  $\int_\gc \phi_\eps(\uun(0)) \dd x \leq \bvarphi(\uu_0)<\infty$ by \eqref{cond-uu-zero}.
By the monotonicity of $\mathbf{d}$ and the positivity  of  $\fc$ (cf.\ \eqref{hyp-fc}), we  also infer that
\begin{equation}
\label{to-be-quoted-later}
\int_0^t
 \int_{{{\Gamma_c}}}\fc(\widehat\teta-\widehat\teta_s)   {\mmu}\cdot {\partial_t\mathbf{u}}\dd x \dd s \geq 0.
 \end{equation}
For the estimates on the right-hand side of \eqref{est-local-u}, we rely on
Young's inequality and  trace theorems. Therefore,  \eqref{boundS1} ensues from
\eqref{est-local-u} by the
 Gronwall lemma. Clearly, the estimate for $\zz$ follows from the fact that $|\zz| \leq 1$ a.e.\
 on $\gc \times (0,\tau)$,  whence the estimate for $\mmu$, in view of \eqref{hyp-r-1}.
 \medskip
\end{proof}

\noindent
Thanks to Lemma
 \ref{lemmaS1} we may define an operator
 \begin{equation}
 \label{ope-1}
 \mathcal{T}_1: \mathcal{Y}_\tau \to \mathcal{U}_\tau:= \{\uu \in H^1 (0,\tau;\bsW)\, : \ \|\uu\|_{H^1(0,\tau;{\bf W})}\leq S_1 \}
 \end{equation}
 mapping every quadruple $(\widehat\teta,\widehat{\teta}_s,\widehat{\uu},\widehat\chi) \in \mathcal{Y}_\tau$
 into the unique solution $\uu$ (together with $\mmu  \in   |\Reg(\phi_\eps'(\whuun){\bf n})| {\bf d}(\dotu)$) of the Cauchy problem  for \eqref{eqS1}.

\paragraph{Step $2$:}
As a second step, we solve (the Cauchy problem for)
 \eqref{eqIIa}--\eqref{inclvincolo}, with   $\widehat\teta_s \in \pi_2(\mathcal{Y}_\tau) $ and $\uu$ from Lemma
 \ref{lemmaS1} on the right-hand side of
   \eqref{eqIIa}.
   \begin{lemma}
   \label{lemmaS1/2}
    Assume   \eqref{assumpt-domain},  Hypotheses (IV)--(V), and suppose that
 $ \chi_0$ complies with
\eqref{cond-chi-zero}.

Then, there exists a constant $S_2>0$ such that
for all $(\widehat{\teta}_s,\uu) \in \pi_2(\mathcal{Y}_\tau) \times \mathcal{U}_\tau$
 there exists a
unique pair $(\chi,\xi)  \in (L^2 (0,\tau; H^2(\gc)) \cap L^\infty (0,\tau; \Vc) \cap  H^1(0,\tau;\hc))
\times L^2 (0,\tau; \hc)$ fulfilling the initial condition \eqref{inichi}, the relations
\begin{equation}
\label{eqS1/2}
\begin{aligned}
 &\partial_t{\chi}+A\chi+\xi+\sigma'(\chi) =-\lambda'(\chi)
\widehat{\vartheta}_s- \frac12 |\uu|^2 \quad\hbox{a.e. in } {\Gamma_c}\times
(0,T)\,,\\
&\xi\in \beta(\chi)\hbox { a.e. in }\, {\Gamma_c} \times (0,T),
 \end{aligned}
 \end{equation}
and the estimate
\begin{equation}\label{boundS1/2}
\|\chi\|_{L^2 (0,\tau; H^2(\gc)) \cap L^\infty (0,\tau; \Vc) \cap  H^1(0,\tau;\hc)} + \| \xi\|_{L^2(0,\tau;\hc)} \leq S_2.
\end{equation}
   \end{lemma}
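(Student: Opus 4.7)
The plan is to view the Cauchy problem for \eqref{eqS1/2} as an abstract gradient-flow inclusion in $\Hc$, perturbed by a Lipschitz forcing term, and to invoke the classical Brezis theory of evolution equations driven by subdifferentials of proper, convex, lower semicontinuous functionals. Specifically, I would introduce the functional $\mathcal{J}:\Hc\to[0,+\infty]$ defined by $\mathcal{J}(\chi):=\tfrac12\int_{\Gamma_c}|\nabla\chi|^2\dd x+\int_{\Gamma_c}\widehat\beta(\chi)\dd x$ when $\chi\in\Vc$ and $\widehat\beta(\chi)\in L^1(\Gamma_c)$, and $+\infty$ otherwise. Thanks to the $\mathrm{C}^2$-regularity of $\partial\Gamma_c$ and elliptic regularity for the Neumann--Laplace problem (cf.\ \cite[Thm.\ 8.12]{gilbarg-trudinger}), one characterizes $\zeta\in\partial\mathcal{J}(\chi)$ iff $\chi\in H^2(\Gamma_c)$, $\partial_{\mathbf{n}_s}\chi=0$ on $\partial\Gamma_c$, and there exists $\xi\in\Hc$ with $\xi\in\beta(\chi)$ a.e.\ such that $\zeta=A\chi+\xi$. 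In this way \eqref{eqS1/2} rewrites as $\partial_t\chi+\partial\mathcal{J}(\chi)\ni G(\chi,t)$ in $\Hc$, with $G(\chi,t):=-\sigma'(\chi)-\lambda'(\chi)\widehat\teta_s(t)-\tfrac12|\uu(t)|^2$.

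By Hypothesis (V), the trace embedding $\bsW\subset L^4(\Gamma_c;\R^3)$ (yielding $|\uu|^2\in L^\infty(0,\tau;L^2(\Gamma_c))$ through $\uu\in\mathcal{U}_\tau$), and the definition of $\mathcal{Y}_\tau$ (yielding $\widehat\teta_s\in L^2(0,\tau;\Hc)$), the map $\chi\mapsto G(\chi,\cdot)$ is Lipschitz from $L^2(0,\tau;\Hc)$ into itself with constant depending only on $M$ and the data; moreover $\chi_0\in\mathrm{dom}(\mathcal{J})$ by \eqref{cond-chi-zero}. Brezis' existence theorem \cite{brezis73}, combined with a standard Banach contraction argument on a small subinterval to absorb the $\chi$-dependence of $G$ and iteration up to $\tau$, produces a unique $\chi\in H^1(0,\tau;\Hc)\cap L^\infty(0,\tau;\Vc)$ with $\partial\mathcal{J}(\chi)\in L^2(0,\tau;\Hc)$; the characterization of $\partial\mathcal{J}$ then extracts $\chi\in L^2(0,\tau;H^2(\Gamma_c))$ and a selection $\xi\in L^2(0,\tau;\Hc)$ with $\xi\in\beta(\chi)$ a.e. Uniqueness of the pair $(\chi,\xi)$ follows by testing the difference of two solutions with $\chi_1-\chi_2$ and using monotonicity of $\beta$ together with the Lipschitz continuity of $\sigma',\lambda'$ and Gronwall.

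For the quantitative bound $S_2$, I would test \eqref{eqS1/2} by $\partial_t\chi$ and integrate on $(0,t)$, using the chain-rule identities $\int_0^t(\xi,\partial_t\chi)_{\Hc}\dd s=\int_{\Gamma_c}\widehat\beta(\chi(t))\dd x-\int_{\Gamma_c}\widehat\beta(\chi_0)\dd x$ and $\int_0^t a(\chi,\partial_t\chi)\dd s=\tfrac12\|\nabla\chi(t)\|^2_{L^2}-\tfrac12\|\nabla\chi_0\|^2_{L^2}$, together with Young's inequality and the $L^2(0,\tau;\Hc)$-bound on $G(\chi,\cdot)$; Gronwall yields the $H^1(0,\tau;\Hc)\cap L^\infty(0,\tau;\Vc)$ estimate with constant depending only on $M$, $\|\chi_0\|_{\Vc}$, $\|\widehat\beta(\chi_0)\|_{L^1(\Gamma_c)}$ and the data. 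A comparison in the equation then controls $A\chi+\xi$ in $L^2(0,\tau;\Hc)$. To separate these two contributions and obtain both $\chi\in L^2(0,\tau;H^2(\Gamma_c))$ and $\xi\in L^2(0,\tau;\Hc)$ individually, I would perform the estimate on the Yosida-regularized problem with $\beta_\mu$ in place of $\beta$, test by $\beta_\mu(\chi^\mu)$, and exploit the monotonicity identity $\int_{\Gamma_c}A\chi^\mu\,\beta_\mu(\chi^\mu)\dd x=\int_{\Gamma_c}\beta_\mu'(\chi^\mu)|\nabla\chi^\mu|^2\dd x\ge 0$, valid thanks to $\partial_{\mathbf{n}_s}\chi^\mu=0$ and $\beta_\mu'\ge 0$. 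Passing to the limit $\mu\downarrow 0$ and invoking elliptic regularity on the $\mathrm{C}^2$-domain $\Gamma_c$ delivers the $L^2(0,\tau;\Hc)$-bound on $\xi$ and the full $L^2(0,\tau;H^2(\Gamma_c))$-bound on $\chi$.

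The main technical obstacle I expect lies precisely in this last separation step: the formal monotonicity identity $(A\chi,\xi)_{\Hc}\ge 0$ is only rigorous at the Yosida level, and converting the Laplacian bound into an $H^2$-bound demands careful use of the $\mathrm{C}^2$ boundary regularity of $\Gamma_c$ and of the Neumann condition satisfied by $\chi^\mu$. Everything else---Lipschitz perturbation via Banach contraction, standard energy estimate, and uniqueness via monotonicity plus Gronwall---is routine once the correct convex functional $\mathcal{J}$ has been identified.
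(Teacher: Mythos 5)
Your overall architecture coincides with the paper's: existence via the Brezis theory for $\partial_t\chi+\partial\mathcal{J}(\chi)\ni G$ with $\mathcal{J}$ the sum of the Dirichlet energy and $\int_{\gc}\widehat\beta(\chi)$, the a priori bound by testing with $\partial_t\chi$ and using the chain rule \eqref{chain-rules-local-bis}, then a comparison giving $A\chi+\xi$ in $L^2(0,\tau;\Hc)$, separation of the two terms by the monotonicity identity $(A\chi,\xi)_{\Hc}\ge 0$ (which the paper also only sketches and which you rightly make rigorous at the Yosida level), and finally elliptic regularity on the $\mathrm{C}^2$ domain $\gc$. Your extra detail on the characterization of $\partial\mathcal{J}$ is welcome.

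There is, however, one genuine gap: your claim that $\chi\mapsto G(\chi,\cdot)$ is Lipschitz from $L^2(0,\tau;\Hc)$ into itself, based only on $\widehat\teta_s\in L^2(0,\tau;\Hc)$. The term $\lambda'(\chi)\widehat\teta_s$ defeats this: since $\lambda'$ is Lipschitz but $\widehat\teta_s(t)$ is unbounded on $\gc$, one has $\|(\lambda'(\chi_1)-\lambda'(\chi_2))\widehat\teta_s(t)\|_{\Hc}\le L\,\|\,|\chi_1-\chi_2|\,\widehat\teta_s(t)\|_{\Hc}$, which is \emph{not} controlled by $\|\chi_1-\chi_2\|_{\Hc}$; indeed even the membership $\lambda'(\chi)\widehat\teta_s\in\Hc$ fails for generic $\chi\in\Hc$ and $\widehat\teta_s\in\Hc$, as the product of two $L^2(\gc)$ functions is merely $L^1(\gc)$. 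So the Banach contraction in $L^2(0,\tau;\Hc)$ does not close as stated, and the same issue silently enters your energy estimate (where you invoke "the $L^2(0,\tau;\Hc)$-bound on $G(\chi,\cdot)$") and your uniqueness argument. The paper avoids this precisely by exploiting that $\widehat\teta_s\in\pi_2(\mathcal{Y}_\tau)$ gives $\widehat\teta_s\in L^2(0,\tau;L^{2/\delta}(\gc))$ (via $H^{1-\delta}(\gc)\hookrightarrow L^{2/\delta}(\gc)$), pairing it with $\|\chi\|_{L^{2/(1-\delta)}(\gc)}\le C\|\chi\|_{\Vc}$ in a three-factor H\"older estimate and closing with a Gronwall inequality carrying the integrable weight $\|\widehat\teta_s\|^2_{L^{2/\delta}(\gc)}$; the analogous interpolation (absorbing the $\Vc$-part of $\chi_1-\chi_2$ into the dissipative term) is what makes the uniqueness/contraction step work. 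You should replace the flat Lipschitz claim by this weighted argument; the rest of your proof then goes through.
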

   \begin{proof}
   The well-posedness of the Cauchy problem for \eqref{eqS1/2}
 follows from standard results in the theory of
parabolic equations with maximal monotone operators,
 after observing that, by Sobolev embeddings and trace theorems,  $\widehat{\teta}_s$ and
  $1/2 |\uu|^2$  are respectively  estimated in
   $L^2 (0,\tau;L^{2/\delta}(\Gamma_c))$ (with $\delta \in (0,1)$ as in \eqref{hyp-r-nuova}),
   and in
   $L^2(0,\tau;\Hc)$.

   In order to obtain estimate \eqref{boundS1/2}, we test
   \eqref{eqS1/2} by $\partial_t\chi$ and integrate in time.
   Exploiting the chain rule for $\widehat{\beta}$  from \cite[Lemma~3.3]{brezis73} 
    \begin{equation}
 \label{chain-rules-local-bis}
 \begin{aligned}
 &
 \int_0^t
\int_{\Gamma_c} \xi \partial_t\chi  \dd x \dd s=
\int_{\Gamma_c}\widehat{\beta}(\chi(t)) \dd x-
\int_{\Gamma_c}\widehat{\beta} (\chi_0) \dd x \,,
\end{aligned}
 \end{equation}
   we obtain
   the estimate
   \begin{equation}
   \label{est-chi-s12}
   \begin{aligned}
   &
   \int_0^t \int_\gc|\partial_t \chi|^2 \dd x \dd s
   + \frac12 \int_\gc |\nabla \chi(t)|^2 \dd x +
   \int_{\Gamma_c}\widehat{\beta}(\chi(t)) \dd x
   \\
   &
   \leq \frac12 \int_\gc |\nabla \chi_0|^2 \dd x
   +\int_\gc \widehat{\beta}(\chi_0) \dd x
   \\ & \qquad
    + \left| \int_0^t\int_\gc \sigma'(\chi)\partial_t \chi \dd x  \dd s \right|
   + \left| \int_0^t \int_\gc \lambda'(\chi) \widehat{\teta}_s \partial_t \chi \dd x \dd s \right|
   + \left| \int_0^t \int_\gc |\uu|^2 \partial_t \chi \dd x \dd s \right|
   \\ & \leq
   C(\|\chi_0 \|_{\Vc}^2 +1) +I_1 +I_2 +I_3.
   \end{aligned}
   \end{equation}
   Now, the last inequality ensues from \eqref{cond-chi-zero}, and we estimate the integral terms $I_i$,
   $i=1,2,3$, as follows:
   \begin{equation}
   \label{useful-estimates}
   \begin{aligned}
   &
   \begin{aligned}
I_1 & \leq \frac14 \int_0^t \int_\gc |\partial_t \chi|^2 \dd x \dd s
+  C \int_0^t \int_\gc (|\chi|^2+1) \dd x
\dd s  + C'
\\ &
\leq \frac14 \int_0^t \int_\gc |\partial_t \chi|^2\dd x \dd s +C  \int_0^t \| \partial_t \chi\|_{L^2(0,s;\hc)}^2 \dd s + C \|
\chi_0\|_{\hc}^2 + C'
\end{aligned}
\\
&
\begin{aligned}
I_2  & \leq  c \int_0^t (\| \chi \|_{L^{2/(1-\delta)}(\gc)}+1) \| \widehat{\teta}_s \|_{L^{2/\delta}(\gc)}
\| \partial_t \chi \|_{\Hc}  \dd s  \\ & \leq \frac14 \int_0^t \| \partial_t \chi \|_{\Hc}^2\dd s +
C \int_0^t  \| \widehat{\teta}_s \|_{L^{2/\delta}(\gc)}^2 (\| \chi \|_{\Vc}^2+1) \dd s,
\end{aligned}
\\
&
I_3 \leq \frac18 \int_0^t \| \partial_t \chi \|_{\Hc}^2\dd s  + c \int_0^t \| \uu \|_{\bfW}^4 \dd s,
   \end{aligned}
   \end{equation}
   where for $I_1$ we have used that $\sigma'$ is Lipschitz continuous
   by virtue of \eqref{hyp-sig},
   whereas the estimate for $I_2$ follows
   from the Lipschitz continuity of $\lambda'$ and from  observing that
   $H^1(\gc) $ embeds continuously in $L^{2/(1-\delta)}(\gc)$ for any $\delta \in (0,1)$, and for $I_3$ we have
   exploited the trace result \eqref{cont-embe}.
As for the left-hand side of \eqref{est-chi-s12}, it remains to observe that,
by convexity,
\begin{equation}
\label{plugged-later}
\exists\, c', \, C'>0 \ \ \text{such that for every $t \in (0,\tau]$}\, :  \ \
\int_{\Gamma_c}\widehat{\beta}(\chi(t)) \dd x \geq -c' \int_\gc |\chi(t)|\dd x-C'.
\end{equation}
Plugging \eqref{useful-estimates} and \eqref{plugged-later} into \eqref{est-chi-s12} and using the Gronwall Lemma
we conclude an estimate for $\chi$ in $L^\infty (0,\tau;\Vc)\cap H^1(0,\tau;\hc)$.
A comparison argument in \eqref{eqS1/2}
(cf.\ also the forthcoming \emph{Seventh a priori estimate} in
Sec.\ \ref{ss:3.3}),
then yields an
estimate in $L^2(0,\tau;\hc)$ for $A\chi + \xi$,
hence for $A\chi$ and $\xi$
 separately in  $L^2(0,\tau;\hc)$ by monotonicity of $\beta$. Therefore,
 by elliptic regularity results
 we get
 the desired bound for
$\chi$
 in  $L^2 (0,\tau; H^2(\gc))$, which concludes the proof of \eqref{boundS1/2}.
 \medskip
   \end{proof}

\noindent
It follows from  Lemma
 \ref{lemmaS1/2} that we may define an operator
 \begin{equation}
 \label{ope-2}
 \begin{aligned}
 \mathcal{T}_2: \pi_2(\mathcal{Y}_\tau) \times \mathcal{U}_\tau  \to \mathcal{X}_\tau:=
 \{ & \chi \in
    L^2 (0,\tau; H^2(\gc)) \cap L^\infty (0,\tau; H^1(\gc)) \cap  H^1(0,\tau;\hc) \cap X_\tau\, :
    \\
    &
    \ \|\chi\|_{L^2 (0,\tau; H^2(\gc))} + \|\chi\|_{L^\infty (0,\tau; \Vc)} +\|\chi\|_{H^1(0,\tau;\hc)}\leq S_2
   \}
    \end{aligned}
 \end{equation}
 mapping  $(\widehat{\teta}_s,\uu) \in \pi_2(\mathcal{Y}_\tau) \times \mathcal{U}_\tau$
 into the unique solution $\chi$ of the Cauchy problem  for \eqref{eqS1/2} (together with $\xi \in  \beta(\chi)$).

\paragraph{Step $3$:}
Eventually, we
solve the Cauchy problem for the system (\ref{teta-weak-app}, \ref{teta-s-weak-app}) with fixed
  $(\widehat
\teta,\widehat\teta_s) \in \pi_{1,2}(\mathcal{Y}_\tau)$ and
 $(\uu,\chi)$ from Lemmas \ref{lemmaS1} and \ref{lemmaS1/2}, respectively. In particular,
 we set
 \begin{equation}
 \label{not-mathcalF}
 \widehat{\mathcal{F}}:= k(\chi)  (\widehat\teta-\widehat\teta_s)+\fc'(\widehat\teta-\widehat\teta_s) |\Reg (\phi_\eps'(\uun){\bf n})| |\dotut|
 \end{equation}
 and plug it into the boundary integral on the left-hand side of
 \eqref{teta-weak-app} and on the right-hand side of \eqref{teta-s-weak-app}.
 Observe that, due to \eqref{fixed-point-set}, \eqref{boundS1}, \eqref{boundS1/2},
 and to the Lipschitz continuity of $\fc$ and $k$,
 there holds
 \begin{equation}
 \label{est-math-cal-F}
 \widehat{\mathcal{F}} \in L^2 (0,\tau;L^{4/3+s}(\gc)) \quad \text{for some } s=s(\delta)>0.
 \end{equation}
We mention in advance that, relying the very fact that the boundary term
$\widehat{\mathcal{F}}$
 in \eqref{eqapptheta1} below does not depend on the unknown $\teta$, we will be able to prove
 uniqueness of solutions for (the Cauchy problem for) \eqref{eqapptheta1}.

\begin{lemma}\label{lemmaS2}
Assume \eqref{assumpt-domain},  Hypotheses (I),  (II), (III), and (V), suppose that $h$ complies with \eqref{hypo-h}, and
let $(\teta_0^\eps,\teta_s^{0,\eps})$ fulfill
\eqref{dati-teta-better}.

Then, there exist positive  constants $S_3$ and $S_4 (\eps)$ such that
for all $(\widehat{\teta},\widehat{\teta}_s,\uu,\chi) \in \pi_{1,2}(\mathcal{Y}_\tau) \times \mathcal{U}_\tau
\times  \mathcal{X}_\tau$
 there exists a
unique couple of functions $(
\teta,\teta_s)$ complying with \eqref{reg-teta-app}--\eqref{reg-log-teta-s-app}, fulfilling the
initial conditions \eqref{iniw-better}--\eqref{iniz-better},  the equations a.e.\ in $(0,T)$
\begin{align}
&
\label{eqapptheta1}
\begin{aligned}
 &  \pairing{}{\V}{\partial_t\applog(\teta)}{ v}
   -\int_{\Omega} \dive(\partial_t\mathbf{u}) \, v  \dd x
+\int_{\Omega}   \nabla \vartheta \, \nabla v  \dd x
+ \int_{\Gamma_c}
\widehat{\mathcal{F}}  v  \dd x
 = \pairing{}{V}{h}{v} \quad
\forall\, v \in V\,,
\end{aligned}
\\
&
\label{eqappthetas1}
\begin{aligned}
   \pairing{}{\Vc}{\partial_t \applog(\teta_s)}{ v }  -\int_{\Gamma_c}
\partial_t \lambda(\chi) \, v   \dd x     +\int_{\Gamma_c} \nabla \vartheta_s  \, \nabla
v \dd x
 = \int_{\Gamma_c} \widehat{\mathcal{F}} v \dd x
  \quad
\forall\, v \in \Vc\,,
\end{aligned}
\end{align}
and the estimates
\begin{align}
&
\label{stimaS2}
 \|\teta\|_{L^2(0,\tau;V) \cap L^\infty(0,\tau;L^1(\Omega))}+\|\teta_s\|_{ L^2(0,\tau;V_{\Gamma_c}) \cap L^\infty(0,\tau;L^1({\Gamma_c}))}\leq S_3,
 \\
 &
 \label{stimaS2bis}
 \|\partial_t \applog(\teta)\|_{L^2(0,\tau;\V')}+ \|\partial_t  \applog(\teta_s)\|_{L^2(0,\tau;\Vc')}\leq S_3,
 \\
 &
 \label{stimaS3bis}
 \|\teta\|_{L^\infty(0,\tau;H)}+\|\teta_s\|_{L^\infty(0,\tau;\hc)} \leq S_4(\eps).
 \end{align}
\end{lemma}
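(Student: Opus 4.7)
My first observation is that \eqref{eqapptheta1} and \eqref{eqappthetas1} are \emph{decoupled} from one another: the common forcing $\widehat{\mathcal{F}}$, the sources $h$ and $\partial_t\uu$ in the bulk equation, and the source $\partial_t\lambda(\chi)$ on the surface, all depend exclusively on the fixed quadruple $(\widehat\teta,\widehat\teta_s,\uu,\chi)$, \emph{not} on the unknowns $(\teta,\teta_s)$. Hence I would tackle the two Cauchy problems separately by essentially the same scheme; I sketch the argument for \eqref{eqapptheta1}, the equation on $\gc$ being strictly analogous and even simpler, since no bulk--trace matching is involved.

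For existence I would set up a Faedo--Galerkin approximation $(\teta_n)$ on the span of the first $n$ eigenfunctions of the homogeneous Neumann Laplacian on $\Omega$. Thanks to Lemma \ref{l:new-lemma2}, which gives $\eps < \applog' \leq \eps + 2/\eps$, the resulting finite-dimensional system can be recast as a Lipschitz ODE in normal form, so the Cauchy--Lipschitz theorem yields local existence; the global a priori bounds below extend the solution to $[0,\tau]$. The key estimate comes from testing the Galerkin identity by $\teta_n$ itself and integrating in time. The parabolic term is handled by the chain-rule identity
\begin{equation*}
\int_0^t \langle \partial_t \applog(\teta_n), \teta_n\rangle_V\, ds = \int_\Omega \mathcal{I}_\eps(\teta_n(t))\, dx - \int_\Omega \mathcal{I}_\eps(\teta_0^\eps)\, dx,
\end{equation*}
with the primitive $\mathcal{I}_\eps$ defined in \eqref{mathcal-i-eps}. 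Combining this with the lower bound \eqref{ine-imu-2}, the initial-data control \eqref{bounddatiteta-1}, the equivalent norm \eqref{equiv-norm}, and Young's inequality applied to the forcing terms $\langle h,\teta_n\rangle$, $-\int_\Omega \dive(\partial_t\uu)\,\teta_n$, and $\int_{\gc}\widehat{\mathcal{F}}\teta_n\, dx$ (this last one via \eqref{est-math-cal-F} and the trace embedding $V \hookrightarrow L^q(\gc)$ for every $q<\infty$), I obtain the $\eps$-uniform bound \eqref{stimaS2} for $\teta$ in $L^2(0,\tau;V)\cap L^\infty(0,\tau;L^1(\Omega))$ together with the $\eps$-dependent estimate \eqref{stimaS3bis} in $L^\infty(0,\tau;H)$ supplied by the $\tfrac{\eps}{2}\teta_n^2$ term in \eqref{ine-imu-2}. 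A comparison argument in the Galerkin identity then delivers \eqref{stimaS2bis} for $\partial_t \applog(\teta_n)$ in $L^2(0,\tau;V')$.

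Passage to the limit $n \to \infty$ is standard: weak-star compactness in the relevant reflexive spaces, Aubin--Lions applied to $\applog(\teta_n)$ (bounded in $L^2(0,\tau;V)\cap H^1(0,\tau;V')$) to obtain strong convergence in $L^2(0,\tau;H)$, and the Lipschitz continuity of $\applog^{-1}$ to transfer it to $\teta_n$, which suffices to identify all the nonlinear limits since the boundary integral $\int_\gc \widehat{\mathcal{F}}\teta_n\,dx$ passes to the limit by compactness of traces. For uniqueness, the crucial point flagged just before the lemma is that $\widehat{\mathcal{F}}$ does not depend on the unknown, so subtracting two solutions yields a homogeneous problem in which the strong monotonicity $(\applog(a)-\applog(b))(a-b)\geq \eps(a-b)^2$ (Lemma \ref{l:new-lemma2}) can be exploited through the classical Alt--Luckhaus chain rule (or, equivalently, by passing to the inverse variable $w=\applog(\teta)$ with $\applog^{-1}$ Lipschitz monotone) to conclude $\teta_1=\teta_2$ directly. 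The analogous arguments on $\gc$ dispose of \eqref{eqappthetas1}, yielding \eqref{reg-teta-s-app}--\eqref{reg-log-teta-s-app} and the corresponding contributions to \eqref{stimaS2}--\eqref{stimaS3bis}.

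The main obstacle to watch is the boundary/source term $\int_\gc \widehat{\mathcal{F}}\teta_n\, dx$: since $\widehat{\mathcal{F}} \in L^2(0,\tau;L^{4/3+s}(\gc))$ has an exponent only barely better than the borderline value $4/3$, the conjugate trace exponent in $V\hookrightarrow L^{(4/3+s)'}(\gc)$ must be chosen with care so that, after Young's inequality, the contribution is absorbed by $\|\nabla\teta_n\|_H^2$ with a constant that is independent of $\eps$, thereby preserving the $\eps$-uniform character of \eqref{stimaS2}--\eqref{stimaS2bis} that is indispensable for the passage to the limit in Section \ref{s:3.3}.
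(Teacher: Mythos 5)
Your proposal is correct, and for the heart of the lemma --- the a priori estimates \eqref{stimaS2}--\eqref{stimaS3bis} --- it coincides with the paper's computation: test by $\teta$, use the chain rule through $\mathcal{I}_\eps$ together with \eqref{ine-imu-2} and \eqref{bounddatiteta-1}, split off the mean value via \eqref{equiv-norm} to handle $h$, $\dive(\partial_t\uu)$ and $\widehat{\mathcal{F}}$, close with Gronwall, and get \eqref{stimaS2bis} by comparison. Where you genuinely diverge is in how existence and uniqueness are obtained: the paper simply invokes the abstract well-posedness theory for doubly nonlinear equations of DiBenedetto--Showalter (uniqueness being immediate because $\dive(\partial_t\uu)$ and $\widehat{\mathcal{F}}$ are frozen), whereas you build the solution by Faedo--Galerkin, using the bi-Lipschitz bounds of Lemma \ref{l:new-lemma2} to put the discrete system in normal form and to run uniqueness via strong monotonicity of $\applog$. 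Your route is longer but more self-contained, and it has the concrete advantage that the identity $\pairing{}{V}{\partial_t\applog(\teta_n)}{\teta_n}=\tfrac{\dd}{\dd t}\int_\Omega\mathcal{I}_\eps(\teta_n)\dd x$ is rigorous at the discrete level, whereas the paper performs it only formally on the limit equation (cf.\ Remark \ref{rem:formal}) and would need the extra viscous regularization to justify it. One slip to correct: the trace embedding you invoke, $V\hookrightarrow L^q(\gc)$ for every $q<\infty$, is false for the bulk space $V=H^1(\Omega)$ with $\Omega\subset\R^3$; only $V\subset L^4(\gc)$ holds (cf.\ \eqref{cont-embe}). This is harmless, since $\widehat{\mathcal{F}}\in L^2(0,\tau;L^{4/3+s}(\gc))$ and its conjugate exponent is strictly below $4$ --- exactly the point you make yourself in your closing paragraph --- but the statement as written should be amended.
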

\begin{proof}
Observe that system (\ref{eqapptheta1}, \ref{eqappthetas1}) is decoupled,
hence we will tackle equations \eqref{eqapptheta1} and
\eqref{eqappthetas1}
separately.

Also taking into account
\eqref{est-math-cal-F}, the well-posedness for  the Cauchy problem for the
doubly nonlinear equation \eqref{eqapptheta1} follows from standard results, cf.\
\cite[Thm.\ 1]{dibenetto-showalter} (see also \cite[Lemma 3.5]{bbr3}):
 in particular, uniqueness
for \eqref{eqapptheta1} is trivial, since the terms $\dive(\partial_t \uu)$ and $\widehat{\mathcal{F}}$
are fixed.
In order to conclude estimates
\eqref{stimaS2}--\eqref{stimaS3bis} for $\teta$, we test \eqref{eqapptheta1}
by $\teta$ and integrate on $(0,t)$ with $t \in (0,\tau]$.
Recalling the definition
\eqref{mathcal-i-eps} of $\mathcal{I}_\eps$,
we exploit the formal identity (cf.\ Remark \ref{rem:formal})
\begin{equation}
\label{formal-identity-1}
\pairing{}{\V}{\partial_t \applog(\teta)}{\teta}= \int_\Omega \applog'(\teta)\partial_t \teta \teta \dd x
=\frac{\dd}{\dd t} \int_\Omega \mathcal{I}_\eps(\teta) \dd x
\end{equation}
and thus infer
\begin{equation}
\label{to-stimaS2}
\begin{aligned}
&
\frac\eps 2 \int_\Omega |\teta(t)|^2\dd x +C_1 \int_\Omega |\teta(t)|\dd x -C_2
+\int_0^t \int_\Omega |\nabla\teta|^2 \dd x \dd s
\\ &
\leq \int_\Omega \mathcal{I}_\eps(\teta(t)) \dd x
+\int_0^t \int_\Omega |\nabla\teta|^2 \dd x \dd s
\\ &
\leq \int_\Omega \mathcal{I}_\eps(\teta_0^\eps) \dd x
+ \int_0^t \int_\Omega \dive(\partial_t \uu) (\teta-m(\teta))\dd x \dd s
- \int_0^t \int_{\gc}\widehat{\mathcal{F}}(\teta-m(\teta))\dd x \dd s
\\ & \quad + \int_0^t \pairing{}{\V}{h}{\teta-m(\teta)}\dd s
+ \int_0^t \int_\Omega \dive(\partial_t \uu)  m(\teta)\dd x \dd s\\
& \quad
- \int_0^t \int_{\gc}\widehat{\mathcal{F}}m(\teta)\dd x \dd s
+\int_0^t \int_\Omega h m(\teta) \dd x \dd s
\\ &
\leq \bar{S_1}(1+\|\teta_0\|_{L^1(\Omega)})+ I_1+I_2+I_3+I_4+I_5+I_6,
\end{aligned}
\end{equation}
where the first inequality is due to \eqref{ine-imu-2}, and the estimate for $\int_\Omega \mathcal{I}_\eps(\teta_0^\eps) \dd x$  follows from  \eqref{bounddatiteta-1}.
As for the terms $I_i$, $i=1,\ldots,6$, by
the Sobolev embeddings and trace results \eqref{cont-embe}, joint with
Poincar\'{e}'s inequality \eqref{equiv-norm}, we
easily have
\[
\begin{aligned}
&
\begin{aligned}
I_1+I_2 +I_3 & \leq
 \int_0^t \left(  \| \partial_t \uu\|_{\bsW} \| \teta{-}m(\teta)\|_H  + \| \widehat{\mathcal{F}}\|_{L^{4/3}(\gc)}
  \| \teta{-}m(\teta)\|_{L^4(\gc)}
  + \| h\|_{V'}  \| \teta{-}m(\teta)\|_V\right)
   \dd s
   \\
   &
  \leq
  \frac14 \int_0^t\| \nabla \teta \|_{H}^2 \dd s +  \int_0^t \left( \| \partial_t \uu\|_{\bsW}^2 {+}
\| \widehat{\mathcal{F}}\|_{L^{4/3}(\gc)}^2 {+}   \| h\|_{V'}^2  \right) \dd s,
\end{aligned}
\\
&
I_4+I_5 +I_6\leq C  \int_0^t \left( \| \partial_t \uu\|_{\bsW} {+}
\| \widehat{\mathcal{F}}\|_{L^{4/3}(\gc)}{+} \| h \|_{L^2(\Omega)}  \right) \| \teta\|_{L^1(\Omega)} \dd s.
\end{aligned}
\]
We plug the above estimates into \eqref{to-stimaS2} and use \eqref{hypo-h}, estimate \eqref{boundS1},
and \eqref{est-math-cal-F}.  Relying on  the Gronwall lemma, we conclude
estimates \eqref{stimaS2} and \eqref{stimaS3bis} for $\teta$. Estimate \eqref{stimaS2bis} for
$\applog(\teta)$ follows from
 a comparison in
\eqref{eqapptheta1}.

Since the calculations related to the analysis of equation \eqref{eqappthetas1} are completely analogous,
we choose to omit them.
\medskip
\end{proof}

\noindent
 Thanks to Lemma
 \ref{lemmaS2},  we may define an operator
 \begin{equation}
 \label{ope-3}
 \begin{aligned}
  & \mathcal{T}_3: \pi_{1,2}(\mathcal{Y}_\tau) \times \mathcal{U}_\tau  \times \mathcal{X}_\tau \to
  \\ &
  \begin{aligned}
 \mathcal{W}_\tau:=
 \{  (\teta,\teta_s) \in
     & (L^2 (0,\tau; V) \cap L^\infty (0,\tau;H)) \times (L^2 (0,\tau; \Vc) \cap L^\infty (0,\tau;\hc)) :
    \\
    &
    \ \|\teta\|_{L^2 (0,\tau; V) \cap L^\infty (0,\tau;L^1(\Omega))} + \|\teta_s\|_{L^2 (0,\tau; \Vc) \cap L^\infty (0,\tau;L^1(\gc))} \leq S_3,
    \\
    & \ \|\teta\|_{L^\infty (0,\tau;H)} + \|\teta_s\|_{L^\infty (0,\tau; \hc)} \leq S_4(\eps)
   \}
   \end{aligned}
    \end{aligned}
 \end{equation}
 mapping $(\widehat\teta,\widehat{\teta}_s,\uu,\chi) \in \pi_{1,2}(\mathcal{Y}_\tau) \times \mathcal{U}_\tau  \times  \mathcal{X}_\tau$
 into the unique solution $(\teta,\teta_s)$ of the Cauchy problem  for system (\ref{eqapptheta1}, \ref{eqappthetas1}).

We are now in the position to prove the existence of local-in-time solutions to
Problem $(P_\eps)$, defined on some interval $[0,\widehat T]$ with $0<\widehat T \leq T$.
Note that $\widehat T$ in fact does not depend on the parameter $\eps>0$.
\begin{proposition}[Local existence for Problem $(P_\eps)$]
\label{prop:loc-exist-eps}
Assume  \eqref{assumpt-domain},
 Hypotheses (I)--(V), and conditions
\eqref{hypo-h}--\eqref{hypo-g} on the data $h$, $\mathbf{f}$,  $\mathbf{g}$,
\eqref{cond-uu-zero}--\eqref{cond-chi-zero} on
$\uu_0,\,\chi_0$, and \eqref{dati-teta-better} on
 $\teta_0^\eps,$
$\teta_s^{0,\eps}$.

Then,  there exists $\widehat{T}\in (0,T]$
 such that for every $\eps>0$  Problem   $(P_\eps)$
admits a solution $(\teta,\teta_s,\uu,\chi,\mmu,\xi)$ on the interval $(0,\widehat{T})$.
\end{proposition}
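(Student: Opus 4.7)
The plan is to apply the Schauder fixed point theorem to the composite operator
\[
\mathcal{T}: \mathcal{Y}_{\widehat T} \to \mathcal{Y}_{\widehat T}, \qquad \mathcal{T}(\widehat\teta, \widehat\teta_s, \widehat\uu, \widehat\chi) := (\teta, \teta_s, \uu, \chi),
\]
obtained by setting $\uu := \mathcal{T}_1(\widehat\teta, \widehat\teta_s, \widehat\uu, \widehat\chi)$, $\chi := \mathcal{T}_2(\widehat\teta_s, \uu)$, and $(\teta, \teta_s) := \mathcal{T}_3(\widehat\teta, \widehat\teta_s, \uu, \chi)$, where $\mathcal{T}_1, \mathcal{T}_2, \mathcal{T}_3$ are the partial solution maps constructed in Lemmas \ref{lemmaS1}, \ref{lemmaS1/2}, \ref{lemmaS2}. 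Any fixed point of $\mathcal{T}$ manifestly satisfies all relations in Problem $(P_\eps)$ on $(0,\widehat T)$ together with the approximate initial conditions.

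I would first fix $M > 0$ large enough that the constant-in-time extensions of the initial data belong to $\mathcal{Y}_{\widehat T}$ for every $\widehat T \in (0,T]$ (uniformly in $\eps$, thanks to \eqref{dati-teta-better}). With this $M$, the constants $S_1, S_2, S_3$ furnished by the three lemmas are fixed numbers independent of $\widehat T$ and of $\eps$, while the $\eps$-dependent $S_4(\eps)$ does not enter the definition of $\mathcal{Y}_{\widehat T}$. To check that $\mathcal{T}$ maps $\mathcal{Y}_{\widehat T}$ into itself, I translate the strong bounds from the lemmas into the weaker norms defining $\mathcal{Y}_{\widehat T}$ via H\"older's inequality in time: since $\bsW \hookrightarrow H^{1-\delta}(\Omega;\R^3)$ and $\uu(t) = \uu_0 + \int_0^t \partial_s \uu \,\mathrm{d}s$,
\[
\|\uu\|_{L^2(0,\widehat T; H^{1-\delta}(\Omega;\R^3))} \leq \widehat T^{1/2}\|\uu\|_{L^\infty(0,\widehat T;\bsW)} \leq \widehat T^{1/2}\bigl(\|\uu_0\|_{\bsW} + \widehat T^{1/2} S_1\bigr),
\]
and analogous $\widehat T^{1/2}$-small bounds hold for $\chi, \teta, \teta_s$ via the estimates \eqref{boundS1/2}, \eqref{stimaS2}. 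Choosing $\widehat T$ small enough (independently of $\eps$) makes each contribution at most $M/4$, establishing the self-mapping property.

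Compactness of $\mathcal{T}(\mathcal{Y}_{\widehat T})$ in the ambient topology follows component-wise from the Aubin-Lions lemma: the $H^1(0,\widehat T;\bsW)$-bound on $\uu$ gives compactness in $L^2(0,\widehat T;H^{1-\delta}(\Omega;\R^3))$, the bound on $\chi$ in $L^2(0,\widehat T;H^2(\gc)) \cap H^1(0,\widehat T;\hc)$ yields compactness in $L^2(0,\widehat T;\hc)$, and for the temperatures the bi-Lipschitz property \eqref{bi-Lip} of $\applog$ transfers the regularity of $\applog(\teta)$, $\applog(\teta_s)$ back to $\teta, \teta_s$, producing compactness in $L^2(0,\widehat T;H^{1-\delta}(\Omega))$ and $L^2(0,\widehat T;H^{1-\delta}(\gc))$. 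For continuity of $\mathcal{T}$, I argue sequentially: given a converging sequence in $\mathcal{Y}_{\widehat T}$, the uniform a priori bounds extract weakly-convergent subsequences of the images; the weak-strong continuity of $\Reg$ in \eqref{hyp-r-1}, the Lipschitz continuity of $\fc, k, \lambda', \sigma'$, and the closedness of the graphs of $\partial\bvarphi$ and $\beta$ allow passage to the limit in every nonlinear term. Uniqueness of the solutions produced by the three lemmas then upgrades subsequential convergence to convergence of the whole sequence.

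The main obstacle is the self-mapping step, where the constants $S_i$ depend (through Gronwall) on $M$: one must argue carefully that $M$ is fixed in terms of the initial data alone, that the resulting $S_i$ are then specific numbers, and finally that $\widehat T$ can be chosen so small (and independently of $\eps$) that the $\widehat T^{1/2}$-factors in the weak-norm translations absorb the multiplicative constants. Once this is in place, Schauder's theorem applies and yields a fixed point of $\mathcal{T}$ on $[0,\widehat T]$, hence a local-in-time solution to Problem $(P_\eps)$.
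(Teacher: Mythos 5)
Your overall architecture is exactly the paper's: the same composite operator $\mathcal{T}=\mathcal{T}_3\circ(\mathcal{T}_2\circ\mathcal{T}_1)$, Schauder's theorem, self-mapping for small $\widehat T$ chosen after $M$ and the $S_i$ are fixed, compactness via Aubin--Lions/Simon, continuity by sequential weak limits plus uniqueness, and the bi-Lipschitz trick for transferring compactness from $\applog(\teta)$ back to $\teta$. However, there is a genuine gap in the self-mapping step for the temperature components. For $\uu$ and $\chi$ the $\widehat T^{1/2}$ argument works because Lemmas \ref{lemmaS1} and \ref{lemmaS1/2} provide $\eps$-independent $L^\infty$-in-time bounds in spaces ($\bsW$, $\Vc$) that embed into the target spaces $H^{1-\delta}(\Omega;\R^3)$ and $\hc$. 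For $\teta$ and $\teta_s$ this is \emph{not} the case: the only $\eps$-independent $L^\infty$-in-time control in \eqref{stimaS2} is in $L^1(\Omega)$ (resp.\ $L^1(\gc)$), which does not dominate the $H^{1-\delta}(\Omega)$ norm, while the $L^\infty(0,\tau;H)$ bound \eqref{stimaS3bis} carries the constant $S_4(\eps)$ and therefore cannot be used if $\widehat T$ is to be chosen independently of $\eps$. Writing $\|\teta\|_{L^2(0,\widehat T;H^{1-\delta}(\Omega))}\le\|\teta\|_{L^2(0,\widehat T;V)}\le S_3$ gives no smallness in $\widehat T$ at all, so the claim that ``analogous $\widehat T^{1/2}$-small bounds hold for $\teta,\teta_s$ via \eqref{stimaS2}'' does not stand as written.

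The paper closes exactly this gap with an interpolation argument: the Gagliardo--Nirenberg-type inequality \eqref{interpolation-brezis}, $\|\teta(t)\|_{H^{1-\delta}(\Omega)}\le c\,\|\teta(t)\|_{V}^{1-\delta}\|\teta(t)\|_{H}^{\delta}$, combined with the space-time interpolation of $L^2(0,\tau;V)\cap L^\infty(0,\tau;L^1(\Omega))$ into $L^{10/3}(0,\tau;L^2(\Omega))$ (with constant controlled by $S_3$), followed by H\"older in time, which produces the small factor $t^{2\delta/5}$ in \eqref{inte-teta}. You need this (or an equivalent device extracting a positive power of $\widehat T$ from higher-than-$L^2$ integrability in time) to complete the self-mapping step; once inserted, the rest of your argument goes through as in the paper.
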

\begin{proof}
Let the operator
$\mathcal{T}: \mathcal{Y}_\tau \to \mathcal{W}_\tau \times \mathcal{U}_\tau \times \mathcal{X}_\tau$
 be defined by
\begin{equation}
\label{def-T-ope} \mathcal{T}(\widehat{\teta}, \widehat{\teta}_s,
\widehat{\uu},\widehat{\chi} ) := (\teta, \teta_s,\uu,\chi) \ \ \text{with } \ \
\begin{cases}
\uu:= \mathcal{T}_1(\widehat{\teta}, \widehat{\teta}_s,
\widehat{\uu},\widehat{\chi} ),
\\
\chi:= \mathcal{T}_2 (\widehat{\teta}_s,\uu),
\\
(\teta,\teta_s):= \mathcal{T}_3 ( \widehat\teta,\widehat{\teta}_s, \uu,\chi).
\end{cases}
\end{equation}
In what follows,  we will show that
 there exists $\widehat{T} \in (0,T]$ such that for every $\eps >0$
\begin{gather}
\label{itself} \text{
 $\mathcal{T}$ maps
 $\mathcal{Y}_{\widehat{T}}$ into itself,}
 \\
 \label{compact-conti}
 \begin{gathered}
\mathcal{T} : \mathcal{Y}_{\widehat{T}} \to
\mathcal{Y}_{\widehat{T}}  \ \ \text{ is compact and
 continuous w.r.t. the topology of } \\ \text{ $L^2(0,\tau;H^{1-\delta}(\Omega))\times L^2(0,\tau;H^{1-\delta}(\Gamma_c))
\times L^2(0,\tau;H^{1-\delta}(\Omega;\R^3))
\times L^2(0,\tau;\Hc)$.}
\end{gathered}
\end{gather}

\noindent \textbf{Ad \eqref{itself}.}
In order to show \eqref{itself},
let $(\widehat\teta,\widehat{\teta}_s,\widehat\uu, \widehat\chi)\in \mathcal{Y}_\tau$
be fixed, and let $(\teta,\teta_s,\uu,\chi):=\mathcal{T}(\widehat\teta,\widehat{\teta}_s,\widehat\uu\,\widehat\chi)$.
 We use the interpolation inequality
 \begin{equation}
\label{interpolation-brezis}
 \|\teta(t)\|_{H^{1-\delta}(\Omega)}\leq c \|\teta(t)\|^{1-\delta}_{H^1(\Omega)}\|\teta(t)\|^\delta_{L^2(\Omega)} \qquad \foraa\, t \in (0,\tau)
\end{equation}
(cf.\ e.g.\ \cite[Cor. 3.2]{brezis-mironescu}).
Now, a further interpolation
between the spaces $L^2 (0,\tau;V)$ and $L^\infty(0,\tau;L^1(\Omega))$ and estimate \eqref{stimaS2} also yield the bound
$\| \teta\|_{L^{10/3}(0,\tau;L^2(\Omega))} \leq \bar{C} S_3$
for some interpolation constant $\bar{C}$.
Integrating \eqref{interpolation-brezis} in time and using
H\"older's inequality we therefore have
\begin{equation}
\label{inte-teta}
\begin{aligned}
\|\teta\|_{L^2(0,t;H^{1-\delta}(\Omega))}^2 & \leq c
 \int_0^t  \|\teta(s)\|^{2(1-\delta)}_{H^1 (\Omega)} \|\teta(s) \|^{2\delta}_{L^2 (\Omega)}\dd s
 \\ & \leq c  \| \teta \|_{L^{2}(0,\tau;H^1(\Omega))}^{2(1-\delta)} t^{(2 \delta)/5}
   \| \teta \|_{L^{10/3}(0,\tau;L^2(\Omega))}^{2\delta}
\leq  C   S_3^{2} t^{(2 \delta)/5}\,.
\end{aligned}
\end{equation}
 We use \eqref{interpolation-brezis}
for $\teta_s$ and $\uu$ to perform calculations analogous to \eqref{inte-teta}, whereas for $\chi$ we trivially
have
$\|\chi\|_{L^2(0,t;\hc)}^2 \leq t \|\chi\|_{L^\infty(0,t;\hc)}^2 \leq t S_2^2$. Combining all of these estimates,
we conclude that there exists  a sufficiently small $\widehat{T}>0$ for which \eqref{itself} holds.

\noindent
\textbf{Ad \eqref{compact-conti}: compactness.}
Exploiting estimates \eqref{boundS1}, \eqref{boundS1/2}, and
the compactness results~\cite[Thm.~4, Cor.~5]{Simon87}, it is immediate to check compactness
of the operator $\mathcal{T}$ as far as the $(\uu,\chi)$-component is concerned.
As for the  $(\teta,\teta_s)$-component, from  \eqref{stimaS2}--\eqref{stimaS3bis} and the Lipschitz continuity of
$\applog$
we deduce an estimate
(with a constant depending on $\eps$)
for $\applog(\teta)$ in $L^2(0,\widehat{T};V)\cap L^\infty (0,\widehat{T}; H) \cap  H^1(0,\widehat{T};V')$
(for  $\applog(\teta_s)$ in
 $L^2(0,\widehat{T};\Vc) \cap L^\infty (0,\widehat{T}; \Hc) \cap H^1(0,\widehat T;\Vc')$, resp.), whence compactness
for $\applog(\teta)$ in $L^2(0,\widehat{T} ;H^{1-\delta}(\Omega))$ (for $\applog(\teta_s)$ in $L^2(0,\widehat{T} ;H^{1-\delta}(\gc))$, resp.), hence for $\teta= \applog^{-1}(\applog(\teta))$ in the same space
(and analogously  for $\teta_s$). Observe that the latter argument relies on
the bi-Lipschitz continuity \eqref{bi-Lip} of $\applog$.

\noindent
\textbf{Ad \eqref{compact-conti}: continuity.}
In order to prove that $\mathcal T $ \eqref{def-T-ope} is continuous, we will  check
that
the operators $\mathcal{T}_i$, $i=1, 2, 3$ defined by \eqref{ope-1}, \eqref{ope-2},
 and \eqref{ope-3} are continuous w.r.t.\ to suitable topologies.

First of all,
 we fix a sequence $\{(\widehat{\teta}_n,
\widehat{\teta}_{s,n}, \widehat{\uu}_n,\widehat{\chi}_n ) \}_n \subset
\mathcal{Y}_{\widehat{T}}$  converging to a
$(\widehat{\teta}_\infty,  \widehat{\teta}_{s,\infty},\widehat{\uu}_\infty,
\widehat{\chi}_\infty )  \in  \mathcal{Y}_{\widehat{T}}$, with
\begin{equation}
\label{co:1}
\begin{array}{lllll}
&  \widehat{\teta}_n \to \widehat{\teta}_\infty
 & \text{in $L^2 (0,\widehat{T}; H^{1-\delta}(\Omega))$}, \qquad &  \widehat{\teta}_{s,n}
 \to \widehat{\teta}_{s,\infty}
 & \text{in $L^2 (0,\widehat{T}; H^{1-\delta}(\gc))$},
\\
&  \widehat{\uu}_{n} \to \widehat{\uu}_{\infty}  &
\text{in $L^{2} (0,\widehat{T}; H^{1-\delta}(\Omega;\R^3))$}, \qquad &  \widehat{\chi}_{n}
 \to \widehat{\chi}_{\infty}
 & \text{in $L^2 (0,\widehat{T}; \hc)$}
\end{array}
\end{equation}
as $n \to \infty$.
We let $\uu_n:= \mathcal{T}_1 (\widehat{\teta}_n,
\widehat{\teta}_{s,n}, \widehat{\uu}_n,\widehat{\chi}_n )$,  and denote by $(
\mmu_n)_n  $ the associated  sequence such that $(\uu_n,\mmu_n) $ fulfill \eqref{eqS1}.
Due
to estimate \eqref{boundS1}, there exist a (not relabeled) subsequence and a pair $(\uu_\infty,\zz_\infty)$ such that
as $n\to \infty$
\begin{equation}
\label{co:1-conseq}
\uu_n \weakto \uu_\infty \quad \text{in $H^1(0,\widehat{T};\bsW)$,} \qquad \mmu_n \weaksto \mmu_\infty \quad
 \text{in $L^\infty
(0,\widehat{T};L^{2+\nu}(\gc;\R^3)) $.}
\end{equation}
Hence, by  well-known compactness results,
 $(\uu_n)_n$ strongly converges to $\uu$ in $\mathrm{C}^0 ([0,\widehat T]; H^{1-\delta} (\Omega; \R^3))$ for all
$\delta \in (0,1]$.
Now,
combining convergences \eqref{co:1} and \eqref{co:1-conseq} and arguing in the very same way as in the
 proof of Thm.\ \ref{mainth:1} (cf.\ the forthcoming Section \ref{s:3.3}), we manage to pass to the limit
 as $n \to \infty$ in \eqref{eqS1}, concluding that the pair $(\uu_\infty,\mmu_\infty)$
 fulfill  equation \eqref{eqS1} with $(\widehat{\teta}_\infty,  \widehat{\teta}_{s,\infty},\widehat{\uu}_\infty
\widehat{\chi}_\infty) $.  Therefore, we have that
\begin{equation}
\label{ope-t1}
\uu_\infty=\mathcal{T}_1
(\widehat{\teta}_\infty,  \widehat{\teta}_{s,\infty},\widehat{\uu}_\infty,
\widehat{\chi}_\infty), \text{ and  convergences \eqref{co:1-conseq} hold for the \emph{whole}  $\{(\uu_n,\mmu_n)\}_n$,}
\end{equation}
the latter fact   by uniqueness of the limit.

Secondly, we
consider the sequence $\chi_n: =\mathcal{T}_2
(\widehat{\teta}_{s,n}, \uu_n)$ with $(\uu_n)_n $ from the previous step, and let
$(\xi_n)_n$ be the associated sequence of selections in $\beta(\chi_n)$,
such that $(\chi_n,\xi_n)$ fulfill \eqref{eqS1/2}.
Thanks to estimate~\eqref{boundS1/2}, we have that  $( \chi_n,\xi_n )_n$ is bounded in
$(L^2(0,\widehat{T};H^2(\gc)) \cap L^\infty(0,\widehat{T};\Vc) \cap H^1(0,\widehat{T};\hc))
\times L^2(0,\widehat{T};\hc) $. Therefore, there exists $(\chi_\infty,\xi_\infty)$ such that, up to a subsequence,
as $n \to \infty$
\begin{equation}
\label{co:2}
\begin{array}{lll}
&
\chi_n\weaksto\chi_\infty & \text{in $L^2(0,\widehat{T};H^2(\gc)) \cap L^\infty(0,\widehat{T};\Vc) \cap H^1(0,\widehat{T};\hc)$,}
\\
& \xi_n \weakto \xi_\infty & \text{in  $L^2(0,\widehat{T};\hc)$,}
\end{array}
\end{equation}
and $(\chi_n)_n$ strongly converges to $\chi_\infty$  in  $L^2(0,T;H^{2-\rho}(\Gamma_c)) \cap \mathrm{C}^0([0,T];H^{1-\delta}(\Gamma_c))$
for all  $\rho \in (0,2]$ and  $\delta \in (0,1]$
by \cite[Thm.\ 4, Cor.\ 5]{Simon87}.
Relying on convergence \eqref{co:1} for $(\widehat{\teta}_{s,n})_n$,
\eqref{co:1-conseq} for $(\uu_n)_n$, and \eqref{co:2}, and arguing as in the passage
to the limit developed in Sec.\ \ref{s:3.3}, it can be shown that the functions $(\chi,\xi)$ solve
\eqref{eqS1/2} with $(\widehat{\teta}_{s,\infty},\uu_\infty)$, i.e.
\begin{equation}
\label{ope-t2}
\chi_\infty=\mathcal{T}_2
(\widehat{\teta}_{s,\infty},\uu_\infty), \text{ and  convergences \eqref{co:2} hold for the \emph{whole}  $(\chi_n,\xi_n)_n$.}
\end{equation}

Thirdly, we let $(\teta_n,\teta_{s,n}):=\mathcal{T}_3 (\widehat{\teta}_{n}, \widehat{\teta}_{s,n},\uu_n,\chi_n)$
with $(\uu_n)_n $ and $(\chi_n)_n$ from the previous steps.
Estimates \eqref{stimaS2}--\eqref{stimaS3bis} imply that there exist
$(\teta_\infty,\teta_{s,\infty})$ such that, along a (not relabeled) subsequence, the following weak convergences
hold as $n \to \infty$
\begin{equation}
\label{cor:3}
\begin{array}{lll}
& \teta_n \weaksto \teta_\infty & \text{in $L^2(0,\widehat T;V)\cap L^\infty (0,\widehat T;H)$,}
\\
& \teta_{s,n} \weaksto \teta_{s,\infty} & \text{in $L^2(0,\widehat T;\Vc)\cap L^\infty (0,\widehat T;\hc)$.}
\end{array}
\end{equation}
Furthermore, taking into account the that $(\applog(\teta_n))_n $
 ($(\applog(\teta_{s,n}))_n$, respectively),
 is bounded in $L^2(0,\widehat T;V)\cap L^\infty (0,\widehat{T};H) \cap  H^1 (0,\widehat T;V')$
 (in $L^2(0,\widehat T;\Vc)  \cap L^\infty (0,\widehat{T};\Hc)  \cap H^1 (0,\widehat T;\Vc')$, resp.)
 and relying on \cite[Thm.\ 4, Cor.\ 5]{Simon87}, we find that
 $\applog(\teta_n) \to \applog(\teta)$ in $L^2(0,\widehat{T}; H^{1-\delta}(\Omega)) \cap
 \mathrm{C}^0 ([0,\widehat T]; H)$, and analogously for $(\applog(\teta_{s,n}))_n$. Therefore,
  thanks to the bi-Lipschitz continuity of $\applog$ we conclude that
  \begin{equation}
  \label{cor:4}
\begin{array}{lll}
& \teta_n \to \teta_{\infty} & \text{in $L^2(0,\widehat T;H^{1-\delta}(\Omega))\cap \mathrm{C}^0 ([0,\widehat T];H)$,}
\\
& \teta_{s,n}  \to  \teta_{s,\infty} & \text{in $L^2(0,\widehat T;H^{1-\delta}(\gc))\cap \mathrm{C}^0 ([0,\widehat T];\hc)$.}
\end{array}
  \end{equation}
  Convergences \eqref{co:1} for
  $(\widehat{\teta}_{n}, \widehat{\teta}_{s,n})_n$,
  \eqref{co:1-conseq} for $(\uu_n)_n$, \eqref{co:2} for $(\chi_n)_n$, and \eqref{cor:3}--\eqref{cor:4},
  combined with the arguments of Sec.\ \ref{s:3.3},
  allow us to pass to the limit as $n \to \infty$ in system (\ref{eqapptheta1}, \ref{eqappthetas1}). Therefore,
  we conclude that
\begin{equation}
\label{ope-t3}
\begin{gathered}
(\teta_{\infty},\teta_{s,\infty})=\mathcal{T}_3
(\widehat{\teta}_\infty,\widehat{\teta}_{s,\infty},\uu_\infty,\chi_\infty) \text{ and }
\\
\text{convergences \eqref{cor:3}--\eqref{cor:4} hold for the \emph{whole}  $(\teta_n,\teta_{s,n})_n$.}
\end{gathered}
\end{equation}
Ultimately,
the continuity of $\mathcal{T}$ ensues from \eqref{ope-t1}, \eqref{ope-t2}, and \eqref{ope-t3}.
\end{proof}
\begin{remark}
\upshape
\label{rmk:toBV}
A key trick to prove compactness and continuity of the operator $\mathcal{T}$ \eqref{def-T-ope}
in the $(\teta,\teta_s)$-component has been:
\begin{compactenum}
\item[-]
 to prove compactness and continuity in
$(\applog(\teta),\applog(\teta_s))$ (exploiting the estimates on the pair $(\partial_t\applog(\teta),\partial_t\applog(\teta_s))$
deduced from  a comparison in the temperature equations \eqref{teta-weak-app} and \eqref{teta-s-weak-app}),
\item[-]
then to  infer
 compactness and continuity in $(\teta,\teta_s)$, relying on  the fact that $\applog$ is bi-Lipschitz,
 cf.\ Lemma \ref{l:new-lemma2}.
 \end{compactenum}

 Obviously we will not be in the position to use such an argument any longer, when taking the
 limit as $\eps \to 0$.
 Indeed, for such a passage to the limit  we will rely on new, $\BV$-type estimates on $(\teta,\teta_s)$ (cf.\ the
 \emph{Fifth} and \emph{Sixth a priori estimate} in the forthcoming Sec.\ \ref{ss:3.3}).
 Exploiting such bounds and a version of the Aubin-Lions theorem for the case of
 time derivatives as measures (see, e.g., \cite[Chap.\ 7, Cor.\ 7.9]{roub-book}), we will
 conclude the desired compactness for $\teta$ and $\teta_s$.
\end{remark}

\subsection{The approximate problem: global existence}
\label{ss:3.3}
We now prove the following
\begin{theorem}[Global existence for Problem $(P_\eps)$]
\label{th:exist-approx-rev}
Assume \eqref{assumpt-domain},
  Hypotheses (I)--(V),  conditions
\eqref{hypo-h}--\eqref{hypo-g} on the data $h$, $\mathbf{f}$,  $\mathbf{g}$,
\eqref{cond-uu-zero}--\eqref{cond-chi-zero} on  $\uu_0$, and $\chi_0$,
and \eqref{dati-teta-better} on the approximate data $\teta_0^\eps$ and $\teta_s^{0,\eps}$.
Then,
\begin{compactenum}
\item[1.]
 for all $\eps>0$ Problem $(P_\eps)$ admits a solution $(\teta,\teta_s,\uu,\chi,\mmu,\xi)$ on
the \emph{whole} interval $(0,T)$;
\item[2.]
there exists
a constant $C>0$ such that for every $\eps>0$ and for \emph{any} global-in-time solution
 $(\teta,\teta_s,\uu,\chi,\mmu,\xi)$
to Problem $(P_\eps)$ the following estimates hold:
\begin{align}
& \label{est-glob-1}
 \eps^{1/2}\|\teta\|_{L^\infty(0,T;H)}+\eps^{1/2}\|\teta_s\|_{L^\infty(0,T;\Hc)}
 \leq C,
 \\&
 \label{est-glob-2}
 \|\teta \|_{L^2(0,T;V)\cap L^\infty(0,T;L^1(\Omega))} +\|\teta_s
\|_{L^2(0,T;\Vc)\cap L^\infty(0,T;L^1(\gc))} \leq C,
\\ &
\label{est-glob-3}
  \|\uu\|_{H^1
(0,T;\bsW)} + \|\chi\|_{L^\infty(0,T;\Vc)\cap H^1(0,T;\Hc)} \leq C,
\\
&
\label{est-glob-4}
 \| \phi_\eps'(\uun){\bf n}  \|_{L^2
(0,T;\bsY_{{\Gamma_c}}')}+\| \mmu \|_{L^\infty (0,T; L^{2+\nu}(\gc;\R^3))}  \leq C \qquad \text{with $\nu>0$ from \eqref{hyp-r-1}},
 \\
 & \label{est-glob-6}
  \|\partial_t \applog(\vartheta)  \|_{L^2 (0,T;V')}+ \|\partial_t \applog(\vartheta_s) \|_{L^2
(0,T;H^1(\gc)')}
\leq C,
\\
& \label{bv-estimate-teta}
 \|\teta \|_{\mathrm{BV}(0,T;W^{1,q}(\Omega)')} \leq C \qquad \text{for any } q>3,
 \\
 &
\label{bv-estimate-teta-s}
 \|\teta_s \|_{\mathrm{BV}(0,T;W^{1,\sigma}(\gc)')} \leq C \qquad \text{for any } \sigma>2,
 \\
 & \label{est-glob-7}
  \| \chi\|_{L^2(0,T;H^2(\gc))}+\|\xi\|_{L^2(0,T;\hc)}\leq C,
  \\
  & \label{est-glob-8}
  \| \applog(\teta)\|_{L^\infty (0,T;H)}+ \|
\applog(\teta_s)\|_{L^\infty (0,T;\Hc)} \leq C\,.
\end{align}
\end{compactenum}
\end{theorem}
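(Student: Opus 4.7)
The plan is to first derive the global-in-time a priori bounds \eqref{est-glob-1}--\eqref{est-glob-8}, with constants independent of $\eps$ (apart from the explicit $\eps^{1/2}$-factor in \eqref{est-glob-1}), and then extend the local solution provided by Proposition~\ref{prop:loc-exist-eps} to all of $[0,T]$ by a standard continuation argument: the bounds dominate every norm entering the fixed-point set $\mathcal{Y}_\tau$, so the local solution cannot blow up at any $\widehat{T}<T$ and can be iterated forward up to $T$.

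The first block of estimates is of energy type. Testing \eqref{teta-weak-app} by $\teta$, \eqref{teta-s-weak-app} by $\teta_s$, \eqref{eqIa-app} by $\partial_t\uu$, and \eqref{eqIIa} by $\partial_t\chi$, summing and integrating on $(0,t)$, the chain-rule identity $\pairing{}{\V}{\partial_t\applog(\teta)}{\teta}=\frac{d}{dt}\int_\Omega\mathcal{I}_\eps(\teta)\dd x$ together with \eqref{ine-imu-1}--\eqref{ine-imu-2} and the assumptions \eqref{bounddatiteta-1}--\eqref{bounddatiteta-2} yields both the $\eps\|\teta(t)\|_H^2$ contribution in \eqref{est-glob-1} and the $L^\infty(0,T;L^1)\cap L^2(0,T;V)$ bound of \eqref{est-glob-2}. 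The cross terms $\int_\Omega\teta\,\dive(\partial_t\uu)\dd x$ cancel between the temperature and momentum equations, while thermodynamic consistency supplies two key sign conditions: positivity of $\fc$ and monotonicity of $\mathbf{d}$ give $\int_\gc\fc(\teta{-}\teta_s)\mmu\cdot\partial_t\uu\geq 0$, and \eqref{hyp-fc} gives $\fc'(\teta{-}\teta_s)(\teta{-}\teta_s)\geq 0$ so that the friction heat source tested against $\teta-\teta_s$ has the favourable sign. Together with the coercivities \eqref{korn_a}--\eqref{korn_b} and Gronwall this produces the $\uu$- and $\chi$-pieces of \eqref{est-glob-3}. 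Estimate \eqref{est-glob-8} follows from the alternative tests by $\applog(\teta),\applog(\teta_s)$ and \eqref{bounddatiteta-bis}; a comparison in \eqref{eqIa-app} controls $\phi_\eps'(\uun)\mathbf{n}+\fc(\teta{-}\teta_s)\mmu$ in $L^2(0,T;\bsY_{{\Gamma_c}}')$, and the fact that $\phi_\eps'(\uun)\mathbf{n}$ is purely normal while $\mmu$ inherits the tangential structure of $\mathbf{d}(\dotu)$ separates the two, producing \eqref{est-glob-4}; a comparison in \eqref{eqIIa} with monotonicity of $\beta$ and elliptic regularity for $A$ yields \eqref{est-glob-7}; a comparison in the temperature equations provides \eqref{est-glob-6}.

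The technically novel step is the $\BV$-in-time control \eqref{bv-estimate-teta}--\eqref{bv-estimate-teta-s}. For $v\in W^{1,q}(\Omega)$ with $q>3$, I would test \eqref{teta-weak-app} by $w:=v/\applog'(\teta)$, so that the leading term formally equals $\int_\Omega\partial_t\teta\,v\dd x$. The admissibility $w\in V$ and its $\eps$-uniform control rest on the pointwise bound $1/\applog'(\teta)\leq|\teta|+2+\eps$ from \eqref{ine-lprimo-2} and on the Lipschitz inequality \eqref{Lip} of Lemma~\ref{l:new-lemma2}, which gives $|\nabla(1/\applog'(\teta))|\leq|\nabla\teta|$ pointwise with a constant \emph{independent} of $\eps$. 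The embedding $W^{1,q}\hookrightarrow L^\infty$ (since $q>3$ in dimension three) then converts each source term into an $L^1$-in-time bound; the diffusion term, after integration by parts, splits into $\int_\Omega\nabla\teta\cdot\nabla v/\applog'(\teta)\dd x+\int_\Omega v\,\nabla\teta\cdot\nabla(1/\applog'(\teta))\dd x$, both controlled via H\"older together with \eqref{Lip} and the already-established bounds on $\teta$ and $\partial_t\uu$. Taking the supremum over $v$ with $\|v\|_{W^{1,q}}\leq 1$ and integrating in time yields $\int_0^T\|\partial_t\teta\|_{W^{1,q}(\Omega)'}\dd t\leq C$, i.e.\ \eqref{bv-estimate-teta}; the two-dimensional analogue on $\gc$, whose Sobolev threshold is $\sigma>2$, gives \eqref{bv-estimate-teta-s}.

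The main obstacle is precisely preserving $\eps$-independence in this last step: the pointwise bound $1/\applog'(\teta)\leq 1/\eps$ is worthless in the limit $\eps\downarrow 0$, and it is only the contraction property \eqref{Lip}, built into the very definition of $\applog$ in \eqref{def-applog} for exactly this purpose, that provides the cancellation preventing a $1/\eps$ blow-up. Once all the bounds are in place, global existence for Problem~$(P_\eps)$ follows by the routine continuation/extension argument outlined at the start.
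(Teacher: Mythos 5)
Your proposal is correct and follows essentially the same route as the paper: the energy estimate from the four tests, the orthogonality argument separating $\phi_\eps'(\uun)\mathbf{n}$ from $\fc(\teta{-}\teta_s)\mmu$, comparison arguments for \eqref{est-glob-6} and \eqref{est-glob-7}, the $\BV$ estimates via the test function $w/\applog'(\teta)$ exploiting the $\eps$-uniform contraction property \eqref{Lip}, and a continuation argument from the local solution. The only point worth flagging is that the paper explicitly acknowledges (Remark \ref{rem:formal}) that the chain-rule identities underlying the energy and $\BV$ estimates are formal at the level of Problem $(P_\eps)$ and would require an additional viscosity regularization to be fully justified, whereas you present them as rigorous.
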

In order to prove Thm.\ \ref{th:exist-approx-rev}, in what follows
we establish  a priori estimates
on the  $(\teta,\teta_s,\uu\,\chi)$-component of  \emph{any} given solution
 to Problem ($P_\eps$), \emph{independent} of the time-interval  on which
 such solution is defined.
 Exploiting these   \emph{global-in-time}   estimates and a standard prolongation argument,
 we will conclude that the local solution to Problem ($P_\eps$) from Proposition \ref{prop:loc-exist-eps}
 extends to a \emph{global-in-time} solution. In this way we will obtain the
 first part of the statement.

In fact,
 it will be clear from the calculations below that such global estimates   hold for a constant \emph{independent} of the parameter $\eps>0$,
 whence \eqref{est-glob-1}--\eqref{est-glob-8}, which
 will provide the starting point for the passage to the limit as $\eps \to 0$
 in Sec.\ \ref{s:3.3}.

We mention in advance, the \emph{First, Fifth, Sixth a priori estimates} below
are only formally derived:
in Remark~\ref{rem:formal} later on we will clarify how they can be made fully rigorous.


\begin{notation}
\upshape
We stress that, from now on the symbols
 $c,\, c', \, C,\,C'$, shall denote a generic constant
possibly depending on the problem data but not on $\eps$, which we let
vary, say, in $(0,1)$.
Since the estimates below are not going to depend on the final time,  we will perform all
the related calculations
 directly on the interval $(0,T)$.
\end{notation}
\paragraph {First a priori estimate.} 
We test \eqref{teta-weak-app} by
$\vartheta$, \eqref{teta-s-weak-app} by $\vartheta_s$, \eqref{eqIa-app} by
$\partial_t\mathbf{u}$, and \eqref{eqIIa} by $\partial_t\chi$, add the resulting
relations, and integrate on $(0,t)$, $t \in (0,T]$.
Recalling \eqref{formal-identity-1}, we \emph{formally} have
\begin{equation}
\label{quasiformal_1-bis}
\begin{aligned}
 \int_0^t \pairing{}{V}{\partial_t \applog(\teta)}{\teta} \dd s  & =
 \int_{\Omega} \mathcal{I}_\eps
( \teta (t) )\dd x - \int_{\Omega} \mathcal{I}_\eps
(\tetazeroe)\dd x  \\ & \geq\frac\eps2\|\teta(t)\|^2_H+ C_1 \|\teta (t)\|_{L^1
(\Omega)} -  \bar{S_1}(1+\| \tetazero \|_{L^1 (\Omega)})
\\ & \geq\frac\eps2\|\teta(t)\|^2_H+ C_1 \|\teta (t)\|_{L^1
(\Omega)}  -C\,,
\end{aligned}
\end{equation}
the first inequality due to  \eqref{ine-imu-2} and \eqref{bounddatiteta-1}, and the  last one to the first of \eqref{cond-teta-zero}. In the same way, we have
\begin{equation}
\label{quasiformal_2-bis}
\int_0^t \pairing{}{\Vc}{\partial_t \applog(\teta_s)}{ \teta_s} \dd s
\geq \frac\eps2\|\teta_s(t)\|^2_{H_{\gc}} + C_1 \|\teta_s (t)\|_{L^1
(\gc)} -C\,.
\end{equation}
We take  into account
the cancellation of some terms,
the chain-rule identities \eqref{chain-rules-local} and \eqref{chain-rules-local-bis},
as well as 
\[
 \int_0^t \int_\gc
 \left( \chi \uu \cdot \partial_t\mathbf{u} +\frac12 |\uu|^2 \partial_t\chi \right) \dd x \dd r=
 \frac12 \int_\gc \chi(t) |\uu(t)|^2 \dd x-\frac12 \int_\gc \chi_0 |\uu_0|^2 \dd x.
 \]
 Therefore, with easy calculations we
arrive at
\begin{equation}
\label{calc-1-app}
\begin{aligned}
 &  \frac\eps2\|\teta(t)\|^2_H+ C_1\| \vartheta(t) \|_{L^1 (\Omega)} +\int_0^t
\int_\Omega |\nabla \teta|^2 \dd x \dd r + \frac\eps2\|\teta_s(t)\|^2_{H_{\gc}}+ C_1\| \vartheta_s(t) \|_{L^1
(\gc)} \\ & \quad +\int_0^t \int_\gc |\nabla \teta_s|^2 \dd x \dd r
+ \int_0^t \int_\gc  k(\chi)(\teta-\teta_s)^2 \dd x \dd r\\ & \quad
+
\int_0^t\int_{\Gamma_c}\fc'(\teta-\vartheta_s)(\teta-\teta_s)|\Reg
(\phi_\eps'(\uun){\bf n})| |\dotut| \dd x \dd r
+ \int_0^t b(\partial_t\mathbf{u},\partial_t\mathbf{u}) \dd r \\ & \quad +\frac12 a(\uu(t),\uu(t))
+\frac12
\int_\gc \chi(t) |\uu(t)|^2 \dd x
+\int_0^t \int_{\Gamma_c}
\fc(\teta-\vartheta_s)   {\mmu}\cdot {\partial_t\mathbf{u}} \dd x \dd r
\\ & \quad
+\int_{\Gamma_c}\phi_\eps(\uun(t))\dd x+
\int_0^t \int_\gc |\partial_t\chi|^2  \dd x \dd r +\frac12 \int_\gc |\nabla
\chi(t)|^2 \dd x
\\ &
 \leq C +\frac12 a(\uu_0,\uu_0)+ \frac12 \int_\gc \chi_0 |\uu_0|^2
\dd x +\int_{\Gamma_c}\phi_\eps(\uun(0))\dd x+\frac12 \| \nabla \chi_0 \|_{\Hc}^2+I_1+I_2+I_3+I_4.
\end{aligned}
\end{equation}
Now, observe that, due   to the positivity of $k$ in \eqref{hyp-k},  the seventh integral term on the
left-hand side is positive, and so are the eighth term, thanks to
\eqref{hyp-fc}, the eleventh, since $\chi \in \dom(\widehat
\beta)\subset [0,+\infty)$,  and the twelfth, by \eqref{hyp-fc} and the
fact that $\mmu \cdot \partial_t\mathbf{u} \geq 0$ a.e.\ in $\Gamma_c \times (0,T)$.
As for the right-hand side of \eqref{calc-1-app}, it holds
 $\int_{\Gamma_c}\phi_\eps(\uun(0))\dd x
\leq \bvarphi(\uu_0)<\infty$ by \eqref{cond-uu-zero}. Moreover, we have that
\[
\begin{aligned}
&
 I_1 = \int_0^t \pairing{}{\bsW}{\mathbf{F}}{\partial_t\mathbf{u}} \dd r \leq
 \varrho \int_0^t \| \partial_t\mathbf{u}  \|_{\bsW}^2 \dd r+
 C \|\mathbf{F} \|_{L^2(0,T;\bsW')}^2
 \leq \frac{\varrho}{C_b} \int_0^t  b(\partial_t\mathbf{u},\partial_t\mathbf{u}) \dd r+
 C \|\mathbf{F} \|_{L^2(0,T;\bsW')}^2
\\
&
\begin{aligned}
 I_2 = \int_0^t \pairing{}{V}{h}{\teta} \dd r & = \int_0^t
\pairing{}{V}{h}{\teta-m(\teta)} \dd r+ \int_0^t \int_\Omega h
m(\teta) \dd x \dd r \\ & \leq \frac12 \int_0^t \int_\Omega |\nabla
\teta|^2 \dd x \dd r + C  \int_0^t \| h \|_{H} \|
\teta\|_{L^1(\Omega)} \dd r+ C \int_0^t \|h \|_{V'}^2 \dd r,
\end{aligned}
 \end{aligned}
\]
where the second inequality in the first line  is due to
\eqref{korn_b}, and we choose $\varrho= C_b/2$ in order to absorb
the term $\int_0^t  b(\partial_t\mathbf{u},\partial_t\mathbf{u}) \dd r$ into the corresponding
term on the left-hand side.
 In the estimate for $I_2$,
 the second passage
follows from Poincar\'{e}'s inequality.
Finally, exploiting the convexity of $\widehat\beta$ and the fact that  $\sigma'$ has at most a  linear growth,
(cf.\ also \eqref{useful-estimates} and \eqref{plugged-later}), we have
\[
\begin{aligned}
&
I_3= -  \int_{\Gamma_c}\widehat{\beta}(\chi(t)) \dd x
\leq  C
\int_{\Gamma_c}|\chi(t)| \dd x+C' \leq \frac14\int_0^t
\int_{\Gamma_c}|\partial_t\chi|^2 \dd x \dd s +C \| \chi_0 \|_{L^1(\Omega)} +C'.
\\
&
I_4 = - \int_0^t
\int_{\Gamma_c}\sigma'(\chi)\partial_t \chi \dd x \dd s \leq
\frac18\int_0^t
\int_{\Gamma_c}|\partial_t\chi|^2 \dd x \dd s  +C \int_0^t \| |\partial_t\chi\|_{L^2(0,s;\hc)}^2 \dd s
  +C' \| \chi_0 \|_{\Hc}^2+C'.
\end{aligned}
\]
 We plug  the above calculations
 into the right-hand side of \eqref{calc-1-app}.
Relying on
assumptions \eqref{hypo-h}  for
$h$, \eqref{effegrande} for $\mathbf{F}$,
and on
\eqref{cond-uu-zero}--\eqref{cond-chi-zero} for the data $\uu_0$ and $\chi_0$,
applying the Gronwall Lemma  we immediately deduce estimates
\eqref{est-glob-1}--\eqref{est-glob-3}.
\paragraph{Second a priori estimate.}
It follows from estimate \eqref{est-glob-3}  and the continuous embeddings
\eqref{cont-embe} that the term $\chi \uu $ on the left-hand side of
\eqref{eqIa-app} is bounded in $L^2(0,T;L^{4-\epsilon}(\gc;\R^3))$ for
every $\epsilon \in (0,3]$. Therefore, taking into account the
previously obtained estimates on $\uu$ and $\teta$, and arguing  by
comparison \eqref{eqIa-app}, we also obtain the bound
\begin{equation}
\label{comparison-1-app} \| \fc(\teta-\teta_s)\mmu + \phi_\eps'(\uun){\bf n}  \|_{L^2
(0,T;\bsY_{{\Gamma_c}}')} \leq C.
\end{equation}
Exploiting the fact that  $\mmu$ and $\phi_\eps'(\uun){\bf n} $ are
\emph{orthogonal} and arguing as in \cite[Sec. 4]{bbr5}, from
\eqref{comparison-1-app} we conclude that
\begin{equation}
\label{second-aprio-app}
 \| \fc(\teta-\teta_s)\mmu\|_{L^2
(0,T;\bsY_{{\Gamma_c}}')} + \| \phi_\eps'(\uun){\bf n}  \|_{L^2
(0,T;\bsY_{{\Gamma_c}}')} \leq C,
\end{equation}
whence \eqref{est-glob-4}: the estimate for $\mmu= |\Reg(\phi_\eps'(\uun){\bf n})|  \zz$ follows
 from
 the fact that $ \Reg: L^2 (0,T;\bsY_{{\Gamma_c}}') \to L^\infty
(0,T;L^{2+\nu}(\gc;\R^3)) $   is bounded thanks to
  \eqref{hyp-r-1},  and from the fact that
  \begin{equation}
  \label{trivial-for-z}
  |\zz|\leq 1 \qquad \text{ a.e.\ on
 $\gc\times (0,T)$}
 \end{equation}
by  the definition \eqref{formu-d} of ${\bf d}$.
\paragraph{Third a priori estimate.} We argue by
comparison in  the temperature equation  \eqref{teta-weak-app}.
 It follows from estimates \eqref{est-glob-2}--\eqref{est-glob-3}, the continuous embeddings
\eqref{cont-embe}, and the Lipschitz continuity \eqref{hyp-k} of
$k$,  that
 \begin{equation}
 \label{ingr-2}
\|k(\chi)(\teta-\teta_s)\|_{L^2 (0,T;L^{4-\epsilon}(\gc))} \leq
C \quad \text{for every $\epsilon \in (0,3]$.}
\end{equation}
 Now, taking into
account estimates \eqref{est-glob-3} and \eqref{est-glob-4},  again
\eqref{cont-embe}, the fact that $\Reg: L^2
(0,T;\bsY_{{\Gamma_c}}') \to L^\infty (0,T;L^{2+\nu}({{\Gamma_c}};\R^3)) $
is bounded by \eqref{hyp-r-1}, and the boundedness of $\fc'$ by
\eqref{hyp-fc}, we have at least
 \begin{equation}
 \label{ingr-1}
\| \fc'(\teta-\teta_s) |\Reg (\phi_\eps'(\uun){\bf n})|
|\dotut|\|_{L^2(0,T;L^{4/3}(\gc))} \leq C.
\end{equation}
Therefore,  in view of
\eqref{hypo-h} on $h$, by comparison in \eqref{teta-weak-app} we obtain  estimate \eqref{est-glob-6}
for $\partial_t \applog(\vartheta)$.
\paragraph{Fourth a priori estimate.}
We rely on   \eqref{ingr-2}, \eqref{ingr-1},  and estimate
\eqref{est-glob-3} which, combined with \eqref{hyp-lambda} for $\lambda$, in
particular yields
\begin{equation}
\label{ingr-3} \|\partial_t \lambda(\chi)
\|_{L^2(0,T;L^{3/2}(\gc))}\,.
\end{equation}
Therefore, a comparison in  the temperature equation   \eqref{teta-s-weak-app} yields the second of \eqref{est-glob-6}.
\paragraph{Fifth a priori estimate.} Let us
\emph{formally}
rewrite \eqref{teta-weak} as
\begin{equation}
\label{e:quasi-formal}
\begin{aligned}
\!\!\!\!\!\!\!  \int_{\Omega} \applog'(\teta)\partial_t \teta \cdot
v \dd x & =\int_{\Omega} \dive(\partial_t\mathbf{u}) \, v \dd x  -\int_{\Omega}
\nabla \vartheta \, \nabla v \dd x  - \int_{\Gamma_c} k(\chi)
(\vartheta-\vartheta_s) v \dd x
\\ & \  - \int_{\Gamma_c} \fc'(\teta-\teta_s) |\Reg (\phi_\eps'(\uun){\bf n})| |\dotut|
v \dd x
 + \pairing{}{V}{h}{v} \quad
\forall\, v \in V \ \hbox{ a.e. in }\, (0,T)\,
\end{aligned}
\end{equation}
and choose in \eqref{e:quasi-formal} a test function $v \in V$ of the form
\begin{equation}
\label{choice-test} v=\frac{1}{\applog'(\teta)} w, \qquad \text{with } w \in
W^{1,q}(\Omega) \text{ and } q>3.
\end{equation}
Taking  the contraction property \eqref{Lip}  of $\frac{1}{\applog'}$
into account and considering that $\teta \in \V \subset
L^6(\Omega)$, we have that
$\frac{1}{\applog'(\teta)} \in \V$ and therefore
 $\frac{1}{\applog'(\teta)} w \in \V$.
 Furthermore, it follows from \eqref{Lip} that
  \begin{equation}
 \label{to-quote-lateron-bis}
 \left|\frac{1}{\applog'(\teta)} \right | \leq \frac1{\eps + \ln_\eps'(1)} + |\teta-1| = \frac{\rho_\eps(1) + \eps}{1+ \eps\rho_\eps(1) + \eps^2}
 + |\teta-1| \leq |\teta| +2 \qquad \aein\, \Omega \times (0,T),
\end{equation}
 where we have also used formula \eqref{very-useful-formula} involving the resolvent $\rho_\eps$ of $\ln$, which satisfies
 $\rho_\eps(1)=1$.
 Therefore  again   exploiting \eqref{Lip}  we find
 \begin{equation}
 \label{to-quote-lateron}
 \left\|\frac{1}{\applog'(\teta)} \right \|_H \leq   \|\teta\|_H +c, \qquad \left\|\nabla \left( \frac{1}{\applog'(\teta)} \right) \right \|_H\leq \| \nabla \teta \|_H\,.
 \end{equation}

 Now, we have
\begin{equation}
\label{l2-product} \int_{\Omega} \applog'(\teta) \partial_t \teta \cdot
\left(\frac{1}{\applog'(\teta)} w\right)\dd x = \int_{\Omega} \partial_t \teta w \dd x .
\end{equation}
Moreover, in view of  \eqref{Lip}, \eqref{to-quote-lateron-bis},  \eqref{to-quote-lateron}, the previously obtained  estimates,
as well as
\eqref{hypo-h}  on $h$,  we see that
\[
\begin{aligned}
& \left|\int_{\Omega} \dive(\partial_t\mathbf{u}) \frac{1}{\applog'(\teta)} w  \dd x \right|  \leq \|
\partial_t\mathbf{u}\|_{\bsW} \| ( \| \teta \|_{H} +c) \| w \|_{L^\infty(\Omega)} \doteq
f_1 \in L^2(0,T),
\\
&   \left| \int_{\Omega} \nabla \vartheta \, \nabla \left(\frac{1}{\applog'(\teta)} w\right) \dd x
\right| \leq \| \nabla \teta\|_{H}^2 \| w \|_{L^\infty(\Omega)} +( \|
\teta\|_{L^6(\Omega)}+c)  \| \nabla \teta\|_{H} \| \nabla w
\|_{L^q(\Omega)} \doteq f_2 \in L^1(0,T),
\\
& \left| \int_{\Gamma_c} k(\chi) (\vartheta-\vartheta_s) \frac{1}{\applog'(\teta)} w \dd
x \right| \leq \|k(\chi)(\teta-\teta_s)\|_{\Hc}  ( \| \teta
\|_{L^4(\gc)} +c) \| w \|_{L^4 (\gc)} \doteq f_3 \in L^2(0,T),
\\ &
\begin{aligned} &
\left|\int_{\Gamma_c} \fc'(\teta-\teta_s) |\Reg (\phi_\eps'(\uun){\bf n})| |\dotut|
\frac{1}{\applog'(\teta)}  w \dd x  \right|  \\ & \quad \leq \| \fc'(\teta-\teta_s) |\Reg (\phi_\eps'(\uun){\bf n})||
\dotut|\|_{L^{4/3}(\gc)}  (\| \teta\|_{L^4(\gc)}+1) \| w
\|_{L^\infty(\gc)} \doteq f_4 \in L^1(0,T),
\end{aligned}
\\
 &
 \begin{aligned}
  \left \langle h, {\frac{1}{\applog'(\teta)} w} \right\rangle_\V  & \leq \| h \|_{V'}\left \|  \frac{1}{\applog'(\teta)}  w \right \|_{\V}
  \\ &
   \leq
 \| h \|_{V'} \left( ( \| \teta\|_V +c) \| w \|_{L^\infty(\Omega)} + (\|
 \teta \|_{L^6(\Omega)} +c)\| \nabla w \|_{L^3(\Omega)} \right) \doteq f_5 \in
 L^1(0,T),
 \end{aligned}
\end{aligned}
\]
where we have also  used the continuous embeddings \eqref{cont-embe},
 $W^{1,q}(\Omega) \subset L^\infty (\Omega) $, $V\subset
L^6(\Omega)$, as well as the trace result $W^{1,q}(\Omega) \subset
L^\infty (\gc) $. All in all, we conclude that
\begin{equation}
\label{e:stima-pulita}
\begin{aligned} &
 \exists\, f \in L^1(0,T) \  \ \foraa\, t\in
(0,T) \  \ \forall\, w \in W^{1,q}(\Omega) \, :
\\ &
 \left|\int_{\Omega}
\partial_t \teta(t) w \dd x \right|=
\left|\pairing{}{W^{1,q}(\Omega)}{\partial_t \teta (t)}{w} \right|
\leq f(t) \|w\|_{W^{1,q}(\Omega)}.\end{aligned}
\end{equation}
 Hence, we have
 \begin{equation}
 \label{partial-t-1}
 \| \partial_t \teta\|_{L^1 (0,T;W^{1,q}(\Omega)')} \leq
 C,
 \end{equation} yielding  \eqref{bv-estimate-teta}.
\paragraph{Sixth a priori estimate.}
 We proceed in an
analogous way with \eqref{teta-s-weak} and test it by
\begin{equation}
\label{choice-test-s} v=\frac{1}{\applog'(\teta_s)} w, \qquad \text{with } w \in
W^{1,\sigma}(\gc) \text{ and } \sigma>2.
\end{equation}
The analogues of  estimates \eqref{to-quote-lateron-bis} and \eqref{to-quote-lateron} hold, therefore
we obtain
\begin{equation}
\label{e:formal-s}
\begin{aligned}
    \int_{\gc}\partial_t \teta_s  w \dd x &  = \int_{\Gamma_c}
\partial_t \lambda(\chi) \frac{1}{\applog'(\teta_s)} w \dd x  -\int_{\Gamma_c} \nabla \vartheta_s  \, \nabla
\left(\frac{1}{\applog'(\teta_s)} w\right)\dd x  \\ & \quad + \int_{\Gamma_c} k(\chi) (\vartheta-\vartheta_s)
\frac{1}{\applog'(\teta_s)} w \dd x
 + \int_{\gc}  \fc'(\teta-\teta_s) |\Reg (\phi_\eps'(\uun){\bf n})| |\dotut|
 \frac{1}{\applog'(\teta_s)} w\dd x
 \\ &
 \doteq I_5+I_6+I_7+I_8.
\end{aligned}
\end{equation}
Using  that $\frac1{\applog'}$ and   $\lambda'$ are Lipschitz and relying on the continuous
embedding $W^{1,\sigma}(\gc)  \subset L^\infty(\gc)$ we have
\[
\begin{aligned}
| I_5|  & \leq  \| \lambda'(\chi)\|_{L^4(\gc)}\| \partial_t\chi\|_{L^2(\gc)}
(\|\teta_s\|_{L^4(\gc)} +c) \|w\|_{L^\infty(\gc)}
\\
& \leq C(1+ \|\chi\|_{L^4(\gc)}) \| \partial_t\chi\|_{L^2(\gc)}
(\|\teta_s\|_{L^4(\gc)}+c) \|w\|_{L^\infty(\gc)} \doteq f_6 \in L^1(0,T)
\end{aligned}
\]
in view of  estimates \eqref{est-glob-2}--\eqref{est-glob-3}. Analogously, we have
\[
\begin{aligned}
&  |I_6| \leq \| \nabla \teta_s\|_{\Hc}^2 \|w\|_{L^\infty(\gc)} +
(\|\teta_s\|_{L^{\rho}(\gc)}+c) \| \nabla \teta_s\|_{\Hc} \|\nabla w
\|_{L^\sigma(\gc)}  \doteq f_7 \in L^1(0,T)
 \\
& |I_7| \leq  (1+\| \chi\|_{\Hc}) \|
\vartheta-\vartheta_s\|_{L^4(\gc)}  ( \| \vartheta_s\|_{L^4(\gc)}+c)
\|w\|_{L^\infty(\gc)}  \doteq f_8 \in L^1(0,T),
\\
& |I_8| \leq \| \fc'(\teta-\teta_s) |\Reg (\phi_\eps'(\uun){\bf n})|
|\dotut|\|_{L^{4/3}(\gc)} (  \| \teta_s \|_{L^4(\gc)} +c) \|
w\|_{L^\infty(\gc)} \doteq f_9 \in L^1(0,T)
\end{aligned}
\]
where in the estimate of $I_6$ we choose $\rho$ in such a way
that $1/\rho+ 1/2+1/\sigma=1$, exploiting \eqref{cont-embe},
whereas to deal with $I_7$ we have  used    the fact  that $k$
is Lipschitz, and finally for $I_8$ we have relied on
\eqref{ingr-1}. All in all, we conclude that
\begin{equation}
\label{e:stima-pulita-s}
\begin{aligned} &
 \exists\, f \in L^1(0,T) \  \ \foraa\, t\in
(0,T) \  \ \forall\, w \in W^{1,\sigma}(\gc) \, :
\\ &
 \left|\int_{\gc}
\partial_t \teta_s(t) w \dd x \right|=
\left|\pairing{}{W^{1,\sigma}(\gc)}{\partial_t \teta_s (t)}{w}
\right| \leq f(t) \|w\|_{W^{1,\sigma}(\gc)}.\end{aligned}
\end{equation}
 Hence, we have
 \begin{equation}
 \label{partial-t-2}
 \| \partial_t \teta_s\|_{L^1 (0,T;W^{1,\sigma}(\gc)')} \leq
 C,
 \end{equation}
  whence \eqref{bv-estimate-teta-s}.
\paragraph{Seventh a priori estimate.} We test \eqref{eqIIa} by
$A\chi+\xi$  and integrate in time. Taking into account the chain-rule
identity \eqref{chain-rules-local-bis}, we obtain
\[
\begin{aligned}
& \frac12\int_\gc |\nabla\chi(t)|^2 \dd x +\int_\gc
\widehat{\beta}(\chi(t)) \dd x
 + \int_0^t \int_\gc
|A\chi+\xi|^2 \dd x \dd r
\\ &
 = \frac12\int_\gc |\nabla\chi_0|^2 \dd x
+\int_\gc \widehat{\beta}(\chi_0) \dd x +I_{9}+I_{10}+I_{11},
\end{aligned}
\]
and estimate
\[
\begin{aligned}
& \begin{aligned}
 I_{9}= -\int_0^t \int_\gc \sigma'(\chi) (A\chi+\xi) \dd x \dd r
 & \leq C \int_0^t (1+\| \chi \|_{\Hc}) \| A\chi+\xi\|_{\Hc} \dd r \\ & \leq
\frac14\int_0^t \| A\chi+\xi\|_{\Hc}^2 \dd r + C \int_0^t \| \chi
\|_{\Hc}^2 \dd r +C',
\end{aligned}
\\
&
\begin{aligned}
I_{10} = -\int_0^t \int_\gc \lambda'(\chi) \teta_s  (A\chi+\xi) \dd
x \dd r  & \leq C  \int_0^t (1+\| \chi \|_{L^\rho(\gc)}) \|
\teta_s \|_{L^\nu (\gc)}  \| A\chi+\xi\|_{\Hc}\dd r
\\ &
\leq C(1+ \| \chi\|_{L^\infty (0,T;\Vc)})^2 \int_0^t \| \teta_s
\|_{\Vc}^2\dd r +\frac14 \int_0^t \| A\chi+\xi\|_{\Hc}^2\dd r
\end{aligned}
\\
&  I_{11}=-\frac12 \int_0^t \int_\gc |\uu|^2 (A\chi+\xi) \dd x \dd r
\leq \frac18 \int_0^t \| A\chi+\xi\|_{\Hc}^2\dd r + C \int_0^t \|
\uu\|_{\bsW}^4 \dd r,
\end{aligned}
\]
where for $I_{9}$ we have used the Lipschitz continuity of
$\sigma'$, for $I_{10}$ chosen $\rho$ and $\nu$ in such a way
that $1/\rho+1/\nu+1/2=1$ and then used the continuous embedding
\eqref{cont-embe}, and analogously for $I_{11}$. Applying the
Gronwall Lemma and taking into account estimates \eqref{est-glob-2}--\eqref{est-glob-3},
we then conclude that
 $
\int_0^t \| A\chi+\xi\|_{\Hc}^2 \dd r \leq C. $ A monotonicity
argument  yields
\begin{equation}
\label{8th-est-app} \| A\chi\|_{L^2(0,T;\Hc)}+\|
\xi\|_{L^2(0,T;\Hc)}\leq C,
\end{equation}
whence
\eqref{est-glob-7} by
 elliptic regularity.
\paragraph{Eighth estimate.} We test \eqref{teta-weak-app} by
$\applog(\teta) \in V$ and \eqref{teta-s-weak-app} by $\applog(\teta_s)\in \Vc$, add the
resulting relations and integrate in time.
Observe that, by \eqref{bi-Lip} we have
 \[
\int_0^t
\int_\Omega \nabla \applog(\teta) \nabla \teta \dd x \dd s=
\int_0^t
\int_\Omega \applog'(\teta)|\nabla\teta|^2 \dd x \dd s \geq \eps  \int_0^t
\int_\Omega |\nabla\teta|^2 \dd x \dd s,
\]
 and analogously for the term involving
$\nabla \applog(\teta_s)$.
Therefore,
 we obtain
\begin{equation}
\label{eigth-est}
\begin{aligned}
 & \frac12 \int_{\Omega} |\applog(\teta(t))|^2 \dd x + \eps\int_0^t \int_\Omega
|\nabla \teta|^2 \dd x \dd r + \frac12 \int_{\gc}
|\applog(\teta_s(t))|^2 \dd x + \eps\int_0^t \int_\gc |\nabla
\teta_s|^2 \dd x \dd r
\\ & \quad
+ \int_0^t \int_\gc k(\chi)(\teta-\teta_s)(\applog(\teta)-\applog(\teta_s))
\dd x \dd r
\\ & \quad +\int_0^t \int_{\gc}  \fc'(\teta-\teta_s) |\Reg (\phi_\eps'(\uun){\bf n})|
|\dotut| (\applog(\teta)-\applog(\teta_s)) \dd x \dd r
\\ &
 \leq \frac12
\int_{\Omega} |\applog(\tetazeroe)|^2 \dd x+ \frac12 \int_{\gc}
|\applog(\tetazerose)|^2 \dd x +I_{12}+I_{13} +I_{14}.
 %
\end{aligned}
\end{equation}
Now,  due to the monotonicity of $\applog$ the fourth term
on the left-hand side is non-negative, and so is the fifth one, as
\[
\fc'(y-z) (\applog(y)-\applog(z))=\fc'(y-z) (y-z)\frac{\applog(y)-\applog(z)}{y-z}
\geq 0 \quad \text{for all }y \neq z
\]
also in view of \eqref{hyp-fc}. On the other hand, by the very definition
\eqref{def-applog}
 of
$\applog$,
there holds
\begin{equation}
\|\applog(\tetazeroe)\|_H^2 \leq 2 \eps^2 \|\tetazeroe\|_H^2 +2  \| \ln_\eps(\tetazeroe)\|_H^2 \leq C
\end{equation}
thanks to  \eqref{bounddatiteta-bis}-\eqref{bounddatiteta-ter},
and we have an analogous bound for  $\|\applog(\tetazerose)\|_{\hc}^2$. Moreover,
 we estimate
\[
\begin{aligned}
& I_{12} = -\int_0^t \int_\Omega
 \mathrm{div}(\partial_t\mathbf{u}) \applog(\teta) \dd x \dd r \leq \int_0^t \|
 \partial_t\mathbf{u} \|_{\bsW}^2 \dd s
 + \frac14 \int_0^t  \|\applog(\teta)\|_H^2 \dd r
 \\
 &
 I_{13}  = -\int_0^t \int_\Omega
 h \applog(\teta) \dd x \dd r \leq \int_0^t \| h \|_{H} \|\applog(\teta)\|_H
 \dd r
 \\
 &
 I_{14} =-\int_0^t \int_\gc \lambda'(\chi) \partial_t\chi   \applog(\teta_s) \dd
 x  \dd s\leq C \int_0^t (1+ \| \chi\|_{L^\infty(\gc)}) \| \partial_t\chi
 \|_{\Hc} \| \applog(\teta_s) \|_{\Hc}  \dd r.
\end{aligned}
\]
We plug the above estimates into the r.h.s.\ of \eqref{eigth-est}, and use the previously
proved bounds \eqref{est-glob-3}, \eqref{est-glob-7} (yielding  a bound for $\chi$
in $L^2(0,T;L^\infty(\gc))$), and \eqref{hypo-h}  for $h$.
Applying a generalized version of the Gronwall Lemma (see,  e.g., \cite{Baiocchi67}), we conclude
\begin{equation}
\label{seventh-aprio-app} \| \applog(\teta)\|_{L^\infty (0,T;H)}+ \|
\applog(\teta_s)\|_{L^\infty (0,T;\Hc)} \leq C
\end{equation}
whence, in view of \eqref{est-glob-1},
\begin{equation}
\label{seventh-aprio-app1} \| \ln_\eps(\teta)\|_{L^\infty (0,T;H)}+ \|
\ln_\eps(\teta_s)\|_{L^\infty (0,T;\Hc)} \leq C.
\end{equation}

\begin{remark}
\label{rem:formal}
\upshape
As previously mentioned,  the \emph{First, Fifth, Sixth estimates} should be performed on a further approximate version of Problem
($P_\eps)$. In fact,  identities \eqref{quasiformal_1-bis},
 \eqref{quasiformal_2-bis}, \eqref{e:quasi-formal}, and \eqref{e:formal-s}
 are just formal since the $\dt\applog(\teta)$  only belongs to
$L^2(0,T;V')$ (analogously,  $\dt\applog(\teta_s)$  only  belongs  to
$L^2(0,T;(H^1(\gc))')$. These calculations can be  rigorously justified in a framework where equations
\eqref{teta-weak-app} and
\eqref{teta-s-weak-app} are further regularized by   adding    viscosity contributions modulated by a second
parameter $\nu>0$, that is
 \begin{align}
&
 \label{teta-weak-app-visc}
\begin{aligned}
 &  \int_\Omega \partial_t \applog(\teta) v \dd x
   -\int_{\Omega} \dive(\partial_t\mathbf{u}) \, v  \dd x
+\int_{\Omega}   \nabla \vartheta \, \nabla v  \dd x + \nu
\int_\Omega \nabla (\partial_t\teta) \nabla v \dd x
\\ & + \int_{\Gamma_c}
k(\chi)  (\vartheta-\vartheta_s) v   \dd x+\int_{\Gamma_c}
\fc'(\teta-\teta_s) |\Reg (\phi_\eps'(\uun){\bf n})| |\dotut| v  \dd x
 = \pairing{}{V}{h}{v} \quad
\forall\, v \in V \ \hbox{ a.e. in }\, (0,T)\,,
\end{aligned}
\\
& \label{teta-s-weak-app-visc}
\begin{aligned}
 & \int_\gc \partial_t \applog(\teta_s)v \dd x  -\int_{\Gamma_c}
\partial_t \lambda(\chi) \, v   \dd x     +\int_{\Gamma_c} \nabla \vartheta_s  \, \nabla
v \dd x + \nu\int_{\Gamma_c} \nabla (\partial_t\vartheta_s)  \,
\nabla v \dd x
\\ &
 = \int_{\Gamma_c} k(\chi) (\vartheta-\vartheta_s) v\dd x
 +\int_{\Gamma_c} \fc'(\teta-\teta_s) |\Reg (\phi_\eps'(\uun){\bf n})| |\dotut| v \dd x
  \quad
\forall\, v \in \Vc  \ \hbox{ a.e. in }\, (0,T)\,,
\end{aligned}
\end{align}

The presence of   these additional viscosity   terms
in \eqref{teta-weak-app-visc} and \eqref{teta-s-weak-app-visc}
implies that the solution to the PDE  system  of Problem $(P_\eps)$,
with \eqref{teta-weak-app-visc} in place of
\eqref{teta-weak-app} and \eqref{teta-s-weak-app-visc} in place of \eqref{teta-s-weak-app},
  and supplemented by
 natural initial conditions, satisfies in addition
\begin{align}
\label{app-reg-teta} \applog(\vartheta), \teta \in H^1 (0,T; V), \qquad  \applog(\vartheta_s), \teta_s
\in  H^1 (0,T; \Vc)
\end{align}
and hence the formal identities \eqref{quasiformal_1-bis},
 \eqref{quasiformal_2-bis}, \eqref{e:quasi-formal}, and \eqref{e:formal-s} can be rigorously revised.
Such an approximation of Problem \ref{prob:rev} was considered in ~\cite{bbr3}, (see also~\cite{bbr4}),
to which we refer the reader. Let us just mention here that,
for technical reasons (see~\cite[Remark~3.2]{bbr3}) the viscosity parameter
$\nu$ has to be kept
 distinct from Yosida parameter $\eps$
for the logarithm. Hence it is necessary to derive global a priori estimates independent of $\epsi$ and/or $\nu$, and
then perform the passage to the limit procedure in two steps, first as $\nu \down 0$
and subsequently as $\eps \down 0$.

  To avoid overburdening the paper, we have preferred to
 omit this further vanishing viscosity regularization, at the price of
   developing the calculations for the \emph{First, Fifth}, and  \emph{Sixth estimates} only on a formal level.
\end{remark}


\section{Proof of Theorem \ref{mainth:1}}

\label{s:3.3}
In this section, we detail the passage to the limit
in the approximate Problem $(P_\eps)$ as $\varepsilon$ tends to $0$ and we achieve the proof of
 Theorem \ref{mainth:1} showing that the approximate solutions converge (up to a subsequence) to a solution of Problem \ref{prob:rev}.
Hereafter we make explicit the dependence of the approximate solutions on
 the parameter
 $\eps$ and use the place-holder
\[
\eeta_\eps : =\phi_\eps'(\uune){\bf n}\,.
\]
We split the proof in some steps.

\paragraph{Compactness.}
Combining estimates
\eqref{est-glob-1}--\eqref{est-glob-6}, \eqref{est-glob-7}--\eqref{est-glob-8}, and \eqref{trivial-for-z}
with the Ascoli-Arzel\`{a}
theorem, the well-known \cite[Thm.~4, Cor.~5]{Simon87}, and
standard weak and   weak$^*$-compactness  results, we find that there exists an
nine-uple $(\teta,w,\teta_s,w_s,\uu,\chi, \eeta, \zz, \xi)$
such that, along a suitable (not relabeled) subsequence,
 the following convergences hold
\begin{align}
\label{convue} &
\begin{aligned}
&
\uu_\epsi\weakto\uu\quad\text{in }H^1(0,T;\bfw),\\
&\uu_\epsi\to\uu\quad\text{in }\mathrm{C}^0([0,T];H^{1-\delta}(\Omega;\R^3)) \quad
\text{for all $\delta \in (0,1]$,}
\end{aligned}\\
& \label{convchie}
\begin{aligned} &\chie\weaksto\chi\quad\text{in
} L^2(0,T;H^2(\Gamma_c)) \cap L^\infty(0,T;\Vc)\cap
H^1(0,T;\Hc),
\\
&\chie\rightarrow\chi\quad\text{in
}  L^2(0,T;H^{2-\rho}(\Gamma_c)) \cap \mathrm{C}^0([0,T];H^{1-\delta}(\Gamma_c))
 \quad \text{for all $\rho \in (0,2]$ and  $\delta \in (0,1]$},
\end{aligned}
\\
 \label{convxie}
&\xi_\epsi\weakto\xi\quad\text{in }L^2(0,T;\Hc),\\
\label{conveeta}
&\eeta_\epsi\weakto \eeta \ \hbox{ in }L^2(0,T;\bsY_\gc'),
\\
 &\zz_\epsi \weaksto \zz \ \hbox{ in }L^\infty(\gc\times (0,T);\R^3),
  \\
\label{convreg} & \mmu_\eps =|\Reg(\eeta_\epsi)|  \zz_\epsi \weaksto \mmu \  \hbox{ in
}L^\infty(0,T;L^{2+\nu}(\gc;\R^3))\qquad \text{with $\nu>0$ from (\ref{hyp-r-1}),}
\\
\label{convtetae}
 &\teta_\epsi \weakto\teta\quad\text{in }L^2(0,T;V), \qquad
 \epsi \teta_\epsi
 \to 0\quad\text{in } L^\infty(0,T;H),
 \\
 &
\label{convtetase} \teta_{s,\epsi}\weakto\teta_s\quad\text{in }
L^2(0,T;\Vc),  \qquad \epsi\teta_{s,\epsi}
 \to 0\quad\text{in }L^\infty(0,T;\Hc),
\\
& \label{convelleteta}
\begin{aligned}
& \applog(\vartheta_\epsi)\weaksto w  \quad \text{in }L^\infty (0,T;H)\cap H^1
(0,T;V')\,,
\\
& \applog(\vartheta_\epsi)\to w  \quad \text{in }\mathrm{C}^0 ([0,T];V')\,,
\end{aligned}
\\
& \label{convelletetas}
\begin{aligned}
 & \applog(\vartheta_{s,\epsi})\weaksto w_s \quad \text{in }L^\infty
(0,T;\Hc)\cap H^1 (0,T;\Vc')\,,
\\
& \applog(\vartheta_{s,\epsi}) \to w_s \quad \text{in }\mathrm{C}^0 ([0,T];\Vc')
\end{aligned}
\end{align}
as $\eps \downarrow 0$.
In addition, in view of condition
\eqref{hyp-r-1} on $\Reg$, we have
\begin{equation}\label{erree}
\Reg(\eeta_\epsi)\rightarrow\Reg(\eeta)\quad\hbox{in
}L^\infty(0,T;L^{2+\nu}(\gc;\R^3)) ,
 \ \text{so that } \
\mmu=|\Reg (\eeta)|\zz,
\end{equation}
and \eqref{convreg}   improves to
\begin{equation}\label{erreeze}
|\Reg(\eeta_\epsi)|\zz_\epsi\weak^*|\Reg(\eeta)|\zz\quad\hbox{in
}  L^\infty(0,T;L^{2+\nu}(\gc;\R^3))\,.
\end{equation}
Next, applying a generalized version of the Aubin-Lions theorem for the case of
time derivatives as measures (see, e.g., \cite[Chap.\ 7, Cor.\ 7.9]{roub-book}), from
\eqref{est-glob-2},  and estimates
\eqref{partial-t-1} and \eqref{partial-t-2}
 we deduce that
\begin{align}
& \label{convforteta}
\begin{aligned}
&\teta_\epsi\rightarrow \teta \quad\hbox{in
}L^2(0,T;H^{1-\delta}(\Omega))\quad\hbox{for all } \delta \in (0,1],
\\
&\teta_\epsi\rightarrow \teta \quad\hbox{in
}L^2(0,T;L^{\delta}(\gc))\quad\hbox{for all } \delta \in [1,4),
\end{aligned}
\\
&
\label{convfortetas}
\teta_{s,\epsi}\rightarrow \teta_s \quad\hbox{in
}L^2(0,T;L^{\delta}(\gc))\quad\hbox{for all } \delta \in [1,+\infty).
\end{align}
Moreover, taking into account the Lipschitz continuity and the $\mathrm{C}^1$- regularity of $\fc$ (cf. \eqref{hyp-fc}), from
 \eqref{convforteta}--\eqref{convfortetas}, we have
\begin{align}
\label{convc}
\begin{aligned}
&\fc (\teta_\epsi-\teta_{s,\epsi})\rightarrow\fc (\teta-\teta_{s})\quad\hbox{in
}L^2(0,T;L^{\delta}(\gc))\quad\hbox{for all } \delta \in [1,4)\,,\\
&\fc' (\teta_\epsi-\teta_{s,\epsi})\rightarrow\fc' (\teta-\teta_{s})\quad\hbox{in
}L^q(0,T;L^{q}(\gc))\quad\hbox{for all } q \in  [1,\infty) \,.\\
\end{aligned}
\end{align}
\paragraph{Passage to the limit in \eqref{eqIIa}.}
Now, we consider \eqref{eqIIa}--\eqref{inclvincolo} written for the approximate solutions $(\uu_\eps,\chi_\eps,\teta_{s,\epsi},\xi_\eps)_\eps$.
Taking into account
convergences \eqref{convue}--\eqref{convxie}, \eqref{convfortetas}, and the Lipschitz continuity of $\lambda'$ and $\sigma'$ (cf. \eqref{hyp-lambda}, \eqref{hyp-sig}),
we easily conclude that the  limit quadruple $(\uu,\chi,\teta_{s},\xi)$ satisfies  equation
\eqref{eqIIa}. Combining the weak convergence \eqref{convxie} with the strong one specified in \eqref{convchie},
and taking into account the strong-weak closedness in $L^2 (0,T;\Hc)$ of the graph of (the operator induced by)
$\beta$, we conclude that $\xi \in \beta(\chi)$
a.e. on $\gc \times (0,T)$, i.e. \eqref{inclvincolo} holds.
\paragraph{Passage to the limit in \eqref{eqIa-app}.}
Owing to   convergences  \eqref{convue}--\eqref{convchie}, \eqref{conveeta}, \eqref{convreg}--\eqref{convtetae} and  \eqref{convc},
we can pass to the limit in \eqref{eqIa-app}. We get
\begin{equation}
\label{passed-equation}
b(\partial_t\uu,\vv)+a(\uu,\vv)+ \int_{\Omega} \vartheta \dive (\vv)\dd x
+\int_{\gc}\chi\uu\vv \dd x
+\langle \eeta, \vv \rangle_{\bsY_\gc}+\int_{\gc} \fc(\teta-\vartheta_s)  \mmu\cdot\vv \dd s=\pairing{}{\bsW}{\bf
F}\vv,
\end{equation}
 for all $\vv \in \bsW$.
Now we have to identify $\eeta$ and $\mmu$ as elements of $\partial \bvarphi(\uu)$ and
$|\Reg(\eeta)|  {\bf d}(\dotu) $, respectively,
 i.e.\ to show that
\eqref{incl1} and \eqref{incl1-bis} hold.

First,
we test \eqref{eqIa-app} by $\uu_\eps$.
For every $t \in [0,T]$  we have
\[
\begin{aligned}&
\limsup_{\eps \to 0} \int_0^t \int_\gc \eeta_\eps \cdot \uu_\eps \dd x \dd s
\\
&
  = -  \liminf_{\eps \to 0}
\Big(   b(\uu_\eps(t),\uu_\eps(t)) - b(\uu_0,\uu_0)    + \int_0^t
\big( a(\uu_\eps,\uu_\eps) +
   \int_{\Omega} \vartheta_\eps \dive (\uu_\eps)\dd x\big) \dd s
\\ &  \quad + \int_0^t \big(  \int_{\Gamma_c} \chi_\eps |\uu_\eps |^2 \dd x
 +  \int_{{{\Gamma_c}}}\fc(\teta_\eps-\vartheta_{s,\eps})   {\mmu_\eps}\cdot {\uu_\eps}\dd x\big) \dd s
  - \int_0^t  \pairing{}{\bsW}{\mathbf{F}}{\uu_\eps}
\Big)
\\
& \leq
- \int_0^t \left(   b(\partial_t\uu,\uu)+a(\uu,\uu)+ \int_{\Omega} \vartheta \dive (\uu)\dd x
+\int_{\gc}\chi|\uu|^2 \dd x
+ \int_{\gc} \fc(\teta-\vartheta_s)  \mmu\cdot\uu -  \pairing{}{\bsW}{\bf
F}\uu \right) \dd s
\\ &
=
\int_0^t \int_\gc \eeta\cdot \uu \dd x \dd s
\end{aligned}
\]
where the  $\leq$ follows from
 exploiting  \eqref{convue}, \eqref{convchie}, \eqref{conveeta}, \eqref{convreg}, \eqref{convtetae},
and \eqref{convc},
combined with lower semicontinuity arguments,   and the last equality is due to \eqref{passed-equation}.
We use the above inequality
   and
 to show that  for all $\vv \in \bsY_\gc $ and $t \in [0,T]$ there holds
\[
\begin{aligned}
\int_0^t\langle {\eeta},{\vv-\uu} \rangle_{\bsY_\gc} \dd s \leq  \liminf_{\eps \to 0} \int_0^t
\int_\gc \eeta_\eps \cdot( \vv-\uu_\eps) \dd x \dd s  & \leq   \liminf_{\eps \to 0} \int_0^t
\int_\gc \left( \phi_\eps (v_{\mathrm{N}} ) -  \phi_\eps (\uune)  \right)  \dd x \dd s \\ &
\leq \int_0^t
\int_\gc \left( \phi(v_{\mathrm{N}} ) - \phi(\uun)  \right)  \dd x \dd s
\\ & =
 \int_0^t \left( \bvarphi(\vv)-\bvarphi(\uu) \right) \dd s,
\end{aligned}
\]
where the second inequality follows from the fact that  $\eeta_\eps  =  \phi_\eps'(\uune){\bf n}$,
 the third one from
the Mosco-convergence (see, e.g., \cite{attouch}) of $\phi_\eps$ to $\phi$, and the last one from the definition \eqref{funct-phi}
of $\bvarphi$. All in all, we conclude \eqref{incl1}.

  Let us now show \eqref{incl1-bis}. 
Preliminarily, for every fixed $\teta \in L^2 (0,T;V)$, $\teta_s \in L^2 (0,T;\Vc)$, and
$\eeta \in L^2 (0,T;\bsY_{{\Gamma_c}}')$, we introduce the
functional $\funeta{(\teta,\teta_s,\eeta)}: L^2 (0,T;L^4(\gc;\R^3)) \to
[0,+\infty)$ defined for all $\vv \in L^2 (0,T;L^4(\gc;\R^3))$
by
\[
\begin{aligned}
 \funeta{(\teta,\teta_s,\eeta)}(\vv):  &   = \int_0^T \int_{{\Gamma_c}} \fc(\teta(x,t)-\vartheta_s(x,t))
|\Reg(\eeta)(x,t)| j(\vv(x,t)) \dd x \dd t \nonumber\\
&= \int_0^T
\int_{{\Gamma_c}} \fc(\teta(x,t)-\vartheta_s(x,t))|\Reg(\eeta)(x,t)| | \vvt (x,t)| \dd x \dd t.
\end{aligned}
\]
Clearly, $\funeta{(\teta,\teta_s,\eeta)}$ is a convex and lower semicontinuous
functional on $L^2 (0,T;L^4(\gc;\R^3))$.
 It can be easily verified
 that the subdifferential $\partial\funeta{(\teta,\teta_s,\eeta)} : L^2 (0,T;L^4(\gc;\R^3)) \rightrightarrows L^2 (0,T;L^{4/3}(\gc;\R^3))
 $ of $\funeta{(\teta,\teta_s,\eeta)}$ is given  at every $\vv \in L^2
 (0,T;L^4(\gc;\R^3))$ by
\begin{equation}
\label{repre-funeta} {\bf h} \in \partial\funeta{(\teta,\teta_s,\eeta)}(\vv) \
\Leftrightarrow \quad \begin{cases} {\bf h} \in  L^2
 (0,T;L^{4/3}(\gc;\R^3)),  \\ {\bf h}(x,t) \in \fc(\teta(x,t)-\vartheta_s(x,t))|\Reg(\eeta)(x,t)|
 \mathbf{d}(\vv(x,t))\end{cases}
\end{equation}
for almost all $(x,t) \in {\Gamma_c} \times (0,T)$, where
$\mathbf{d} =\partial j$ is given by \eqref{formu-d}.
We shall
prove that
\begin{equation}
\label{prove} \funeta{(\teta,\teta_s,\eeta)}(\ww) - \funeta{(\teta,\teta_s,\eeta)}(\partial_t\uu)
\geq \int_0^T \int_{\gc} \fc(\teta-\vartheta_s)|\Reg(\eeta) | \zz \cdot(\ww -\dotu) \dd x \dd t
\end{equation}
for all $\ww \in L^2 (0,T;L^4(\gc;\R^3))$.  From \eqref{prove}
we will conclude that
$\fc(\teta-\vartheta_s)|\Reg(\eeta)|\zz\in \partial\funeta{(\teta,\teta_s,\eeta)}(\partial_t\uu)$, hence
the desired
\eqref{incl1-bis} by \eqref{repre-funeta}, the strict positivity \eqref{hyp-fc}
of $\fc$,
 and \eqref{erree}.  In order to show \eqref{prove},
we first observe that
\begin{equation}
\label{ehsicivuole} \limsup_{\eps\to 0} \int_0^T \int_{\gc}
\fc (\teta_\epsi-\teta_{s,\epsi})|\Reg(\eeta_\eps)| \zz_ \eps   \cdot \partial_t \uu_\eps \dd x \dd t
\leq  \int_0^T \int_{\gc} \fc (\teta-\teta_{s})|\Reg(\eeta) | \zz \cdot \partial_t \uu \dd x \dd t,
\end{equation}
which can be  checked   by testing \eqref{eqIa-app} by
$\partial_t\uu_\eps$ and passing to the limit via convergences
\eqref{convue}--\eqref{convchie}, \eqref{conveeta}--\eqref{convreg},  \eqref{erree},
  \eqref{convforteta},  lower semicontinuity arguments, and again the Mosco convergence of $\phi_\eps$.
Therefore, we have
\begin{align}\label{prove1}
\int_0^T \int_{\gc} \fc (\teta-\teta_{s})|\Reg(\eeta) | \zz \cdot(\ww -\dotu)  \dd x \dd t & \leq
\liminf_{\eps \to 0} \int_0^T \int_{\gc}
\fc (\teta_\epsi-\teta_{s,\epsi})|\Reg(\eeta_\eps)| \zz_\eps   \cdot(\ww
-\partial_t\uu_\eps)   \dd x \dd t  \nonumber\\   &  \leq \liminf_{\eps \to 0} \int_0^T \int_{{\Gamma_c}}\fc (\teta_\epsi-\teta_{s,\epsi})
|\Reg(\eeta_\eps)| ( | \wwt |- |(\partial_t \uu_\eps)_{\mathrm{T}} |)  \dd x \dd t   \nonumber\\ & \leq \int_0^T \int_{{\Gamma_c}}
\fc (\teta-\teta_{s})|\Reg(\eeta)| (| \wwt |- |(\partial_t \uu)_{\mathrm{T}} |)  \dd x \dd t
\end{align}
where the first inequality follows from \eqref{ehsicivuole} and convergences
\eqref{erreeze} and \eqref{convc},
 the
second one  from the fact that $|\Reg(\eeta_\eps)|\zz_\eps
 \in |\Reg(\eeta_\eps)| \mathbf{d}(\partial_t\uu_\eps)
 $,
  and the last one from combining the \emph{weak} convergence \eqref{convue}
 with the \emph{strong} convergences  \eqref{erree} and \eqref{convc}.
  Then,
\eqref{prove} ensues.
Furthermore, arguing as in the derivation of \eqref{prove1}, relying on \eqref{convue}, \eqref{erree},  \eqref{convc},  and \eqref{incl1-bis},   and using that, indeed,  $|\Reg(\eeta_\eps)|\zz_\eps \cdot \partial_t\uu_\eps = |\Reg(\eeta_\eps)| |(\partial_t \uu_\eps)_{\mathrm{T}} |$
 a.e.\ in $\gc \times (0,T)$,   we deduce
\begin{equation}
\begin{aligned}
\label{ehsicivuole1}
 & \liminf_{\eps\to 0} \int_0^T \int_{\gc}
\fc (\teta_\epsi-\teta_{s,\epsi})|\Reg(\eeta_\eps)| \zz_ \eps   \cdot \partial_t \uu_\eps  \dd x \dd t \\ &   =
\liminf_{\eps\to 0} \int_0^T \int_{\gc}
\fc (\teta_\epsi-\teta_{s,\epsi})|\Reg(\eeta_\eps)| |(\partial_t \uu_\eps)_{\mathrm{T}} |  \dd x \dd t \\
&
\geq  \int_0^T \int_{\gc} \fc (\teta-\teta_{s})|\Reg(\eeta) | |(\partial_t \uu)_{\mathrm{T}} |  \dd x \dd t  =
\int_0^T \int_{\gc} \fc (\teta-\teta_{s})|\Reg(\eeta) | \zz \cdot \partial_t \uu  \dd x \dd t \,.
\end{aligned}
\end{equation}
Ultimately, from \eqref{ehsicivuole} and \eqref{ehsicivuole1} we conclude
\begin{equation}
\label{ehsicivuole2} \lim_{\eps\to 0} \int_0^T \int_{\gc}
\fc (\teta_\epsi-\teta_{s,\epsi})|\Reg(\eeta_\eps)| \zz_ \eps   \cdot \partial_t \uu_\eps   \dd x \dd t  =
\int_0^T \int_{\gc} \fc (\teta-\teta_{s})|\Reg(\eeta) | \zz \cdot \partial_t \uu   \dd x \dd t \,.
\end{equation}
Now, in addition to \eqref{convue}, we prove the following strong convergence
\begin{equation}
\label{forteu-t}
 \partial_t\uu_\eps\rightarrow
\partial_t\uu\quad\text{in }L^2(0,T;\bsW),
\end{equation}
which is crucial
in order to pass to the limit in  the frictional contribution $\displaystyle \int_{\Gamma_c}
\fc'(\teta_\eps-\teta_{s,\eps}) |\Reg (\phi_\eps'(\uu_{\eps_N}{\bf n})| |(\partial_t \uu_\eps)_T| v  \dd x$
in \eqref{teta-weak-app} and \eqref{teta-s-weak-app}.
To this aim,  we first observe that
\begin{equation}
\label{primo}
\limsup_{\eps \to 0} \int_0^T
b(\partial_t\uu_\eps,\partial_t\uu_\eps)\dd t \leq \int_0^T
b(\partial_t\uu,\partial_t\uu) \dd  t
\end{equation}
arguing in a similar way as in the derivation of \eqref{ehsicivuole}: we test \eqref{eqIa-app} by
$\partial_t\uu_\eps$ and  we pass to the limit  exploiting convergences
\eqref{convue}--\eqref{convchie}, \eqref{conveeta}--\eqref{convreg}, \eqref{convforteta}, \eqref{ehsicivuole2}
and the Mosco convergence of $\phi_\eps$.
 Since the converse inequality for the
$\liminf_{\eps \to 0}$ holds by
 the first of \eqref{convue},  we then conclude that
\[
\lim_{\eps \to 0} \int_0^T
b(\partial_t\uu_\eps,\partial_t\uu_\eps) \dd t =\int_0^T
b(\partial_t\uu,\partial_t\uu) \dd t.
\]
This gives  \eqref{forteu-t}, by the $\bf W$-ellipticity of $b$ (cf.\ \eqref{korn_b}).

\paragraph{Passage to the limit in \eqref{teta-weak-app} and \eqref{teta-s-weak-app}.}
 We pass to the limit in \eqref{teta-weak-app} and \eqref{teta-s-weak-app} relying on the above convergences
   \eqref{convue}--\eqref{convchie}, \eqref{conveeta}, \eqref{convtetae}--\eqref{erree}, \eqref{convforteta}--\eqref{convc},
  \eqref{forteu-t},
  and the following additional convergences for the nonlinear terms
   in \eqref{teta-weak-app} and \eqref{teta-s-weak-app}. Indeed,
conditions \eqref{hyp-k}--\eqref{hyp-lambda}
on $k$ and $\lambda$ and convergences  \eqref{convchie}, \eqref{convforteta}--\eqref{convfortetas}
yield
\begin{equation}
\label{termisto1}
\begin{aligned}
& k(\chi_\eps)(\teta_{\eps}-\teta_{s,\eps})\rightarrow
k(\chi)(\teta-\teta_s)\quad\text{in }L^2(0,T;\Hc),
\end{aligned}
\end{equation}
 as well as
\begin{equation}
\label{termisto2}
\begin{aligned}
&
 \lambda(\chi_\eps) \weakto \lambda (\chi) \quad \text{in } H^1
(0,T; L^{3/2}(\Gamma_c)).
 \end{aligned}
\end{equation}
\noindent
Exploiting all of the above convergences, we get
 \begin{align}
&
 \label{teta-weak-lim}
\begin{aligned}
 &  \!\!\!\!\!\!\!\! \!\!\!\!  \pairing{}{\V}{\partial_t w}{ v}
   -\int_{\Omega} \dive(\partial_t\mathbf{u}) \, v  \dd x
+\int_{\Omega}   \nabla \vartheta \, \nabla v  \dd x
\\ & \! \!\!\!\!  + \int_{\Gamma_c}
k(\chi)  (\vartheta-\vartheta_s) v   \dd x+\int_{\Gamma_c}
\fc'(\teta-\teta_s) |\Reg (\eeta)| |\dotut| v  \dd x
 = \pairing{}{V}{h}{v} \quad
\forall\, v \in V \ \hbox{ a.e. in }\, (0,T)\,,
\end{aligned}
\\
& \label{teta-s-weak-lim}
\begin{aligned}
 &  \!\!\!\!\!\!\!\! \!\!\!\!  \pairing{}{\Vc}{\partial_t w_s}{ v}  -\int_{\Gamma_c}
\partial_t \lambda(\chi) \, v   \dd x     +\int_{\Gamma_c} \nabla \vartheta_s  \, \nabla
v \dd x
\\ &
 \quad = \int_{\Gamma_c} k(\chi) (\vartheta-\vartheta_s) v\dd x
 +\int_{\Gamma_c} \fc'(\teta-\teta_s) |\Reg (\eeta)| |\dotut| v \dd x
  \quad
\forall\, v \in \Vc  \ \hbox{ a.e. in }\, (0,T)\,.
\end{aligned}
\end{align}
It remains to show that
\begin{align}
\label{inclu-elle} & w(x,t)= \ln
(\teta(x,t)) \quad  \hbox{ for a.a. } (x,t) \in \Omega \times (0,T), \\  &\label{inclu-elle-s}
w_s(x,t) = \ln (\teta_s(x,t)) \quad \hbox{ for a.a. } (x,t) \in
\Gamma_c\times (0,T)\,.
\end{align}
We argue just for \eqref{inclu-elle}, the procedure for \eqref{inclu-elle-s} being completely analogous.
First, recalling the definition of $\applog$ (cf. \eqref{def-applog}), we observe that
\eqref{convelleteta} and the second of \eqref{convtetae} give
\begin{equation}\label{convelle}
\ln_\eps(\vartheta_\epsi)\weaksto w  \quad \text{in }L^\infty (0,T;H)\,.
\end{equation}
Thus, by  relying on well-known properties of Yosida regularizations  (cf.\ \cite[Lemma~1.3, p.~42]{barbu76}),
to conclude \eqref{inclu-elle} it is sufficient to check  that
\begin{equation}
\label{e:limsup} \limsup_{\eps \searrow 0} \int_0^T \int_{\Omega}\ln_\eps (\teta_\eps) \teta_\eps \dd  x \dd t
\leq \int_0^T \int_{\Omega} w \teta \dd  x \dd t\,.
\end{equation}
The latter follows combining the weak convergence \eqref{convelle} with the strong convergence \eqref{convforteta}.
This concludes the proof. \fin


\end{document}